\numberwithin{equation}{section}
\title{Local Exact Controllability to Stationary Solutions of a Semilinear Parabolic Equation}
\author{Marius Beceanu}
\thanks{Department of Mathematics, University of Chicago, 5734 S.\ University Ave., Chicago, IL 60637 ({\tt mbeceanu@uchicago.edu})}
\newtheorem{lemma}{Lemma}[section]
\newtheorem{proposition}[lemma]{Proposition}
\newtheorem{corollary}[lemma]{Corollary}
\newtheorem{observation}[lemma]{Remark}
\newtheorem{theorem}[lemma]{Theorem}
\newcommand{\set}{\mathbb}
\newcommand{\dl}{\nabla}
\newcommand{\ye}{y_{\epsilon}}
\newcommand{\ue}{u_{\epsilon}}
\newcommand{\pe}{p_{\epsilon}}
\newcommand{\LL}{\mathcal L}
\newcommand{\be}{\begin{equation}}
\newcommand{\ee}{\end{equation}}
\newcommand{\lb}{\label}
\newcommand{\supp}{supp}
\renewcommand{\div}{div}
\renewcommand{\upsilon}{R}
\newcommand{\ov}{\overline}
\newcommand{\ds}{\displaystyle}
\newcommand{\dd}{{\,d}}
\begin{document}
\maketitle
\begin{abstract}
This paper establishes the local exact controllability of the quasilinear porous media equation with Dirichlet boundary conditions.

Consider the equation
\begin{equation*}
\begin{array}{l}
y_t - \Delta a(y) = m u + f \text{ on }Q\\
y(0) = y_0,\ y \mid_{\Sigma} = 0
\end{array}
\end{equation*}
on the $n + 1$-dimensional cylinder $Q = \Omega \times (0, T)$ with lateral boundary $\Sigma  = \partial \Omega \times (0, T)$. The exact controllability in finite time is proved when $\|y_0 - y_s\|_{W^{1, n}_0(\Omega) \cap C(\ov \Omega)}$ is sufficiently small, $n > 1$, for every stationary solution $y_s$ such that $a(y_s) \in W^{2, q}(\Omega)$, where $q>n$. It is assumed that $\Omega$ is a bounded open set with $C^2$ boundary and that $a \in C^2(\set R)$, $a'>0$.
\end{abstract}

%\begin{keywords}
%parabolic equation, nonlinear equation, exact controllability, Carleman inequality
%\end{keywords}

%\begin{AMS}
%93B05, 93C10, 93C35
%\end{AMS}

\pagestyle{myheadings}
\thispagestyle{plain}
\markboth{M. BECEANU}{LOCAL EXACT CONTROLLABILITY OF THE NONLINEAR DIFFUSION EQUATION}

\section{Introduction}
\subsection{Statement and Main Result}
Consider a fixed cylinder $Q = Q_T = \Omega \times (0, T)$, where $\Omega$ is a bounded domain with $C^2$ boundary in $\set R^n$, $n>1$. This paper aims to establish the local exact controllability of the quasilinear porous media equation with Dirichlet boundary conditions, namely
\begin{equation}
\begin{array}{l}
y_t - \Delta a(y) = mu + f \text{ on } Q\\
y(0) = y_0,\ y \mid_{\Sigma = \partial \Omega \times (0,T)} = 0.
\end{array}
\label{dif}
\end{equation}
Here $a: \set R \rightarrow \set R$ is a function that only matters up to an additive constant. We will assume that $a \in C^2(\set R)$, $a(0)=0$, and $a'(y) >0$ $\forall y \in \set R$. Moreover, let $f$ be constant in time, $f \in L^q(\Omega)$, $q>n$.%$\|a'\|_{C^1(\set R)} + \|1/a'\|_{C^0(\set R)}  = \|a'\|_{C^1(\set R)} + \|1/a'\|_{C^0(\set R)} < \infty$, but for the main result this condition will only need to hold on a compact set.

Here $m$ is the characteristic function of an open subset $\omega \subset \subset \Omega$. We also denote $Q_{\omega} = \omega \times (0, T)$ and $\|\cdot\|_q = \|\cdot\|_{L^q(\Omega)}$.

Consider $y_s$ to be a stationary solution of (\ref{dif}), such that $a(y_s) \in W^{2, q}(\Omega)$ and therefore $y_s \in W^{2, q}(\Omega)$:
\begin{equation}
\begin{array}{l}
-\Delta a(y_s) = f,\ y_s \mid_{\partial \Omega} = 0.
\end{array}
\label{stationar}
\end{equation}

The equation (\ref{dif}) is said to be exactly controllable to $y_s$ if there is $u \in L^2(Q)$, called the control, such that by inserting it in (\ref{dif}) one obtains a solution $y$ such that $y(T) \equiv y_s$. If equation (\ref{dif}) is exactly controllable to $y_s$ for all $y_0$ in some neighborhood of $y_s$, the equation is called locally exactly controllable.

This paper proves the local exact controllability of equation (\ref{dif}).

In the statement of the main result we use several notations that depend on the following lemma, due to~\cite{fursikov}:
\begin{lemma}
There exists a function $\psi \in C^2(\overline{\Omega})$ such that $\psi(x) > 0$ $\forall x \in \Omega$, $\psi(x) = 0$ on $\partial \Omega$, and $|\dl \psi(x)| > 0$ $\forall x \in \Omega \setminus \omega_0$, where $\omega_0 \subset \subset \omega$.
\label{lema}
\end{lemma}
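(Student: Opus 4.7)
The strategy is to produce $\psi$ in two stages: first construct a $C^2$ function on $\ov\Omega$ with the required boundary behavior, and then push its (generally unavoidable) interior critical points into $\omega_0$ via a diffeomorphism of $\ov\Omega$ that is the identity near $\partial\Omega$.

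For the first stage I would take $\psi_0$ to be the solution of $-\Delta\psi_0 = 1$ in $\Omega$ with zero Dirichlet data. Because $\partial\Omega$ is $C^2$, classical elliptic theory gives $\psi_0 \in C^2(\ov\Omega)$; the maximum principle gives $\psi_0 > 0$ in $\Omega$; and Hopf's lemma gives $\partial_\nu \psi_0 < 0$ on $\partial\Omega$, so $|\dl \psi_0| > 0$ on a collar neighborhood $U$ of $\partial\Omega$. A small $C^2$-perturbation supported on a compact subset of $\Omega \setminus U$ then turns $\psi_0$ into a Morse function $\psi_1$ with the same boundary data and finitely many nondegenerate critical points $p_1, \ldots, p_N \in \Omega \setminus U$.

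For the second stage, since $n > 1$ and $\Omega$ is connected, I would choose pairwise disjoint smoothly embedded arcs $\gamma_i \subset \Omega \setminus U$ linking each $p_i$ to a distinct point $q_i \in \omega_0$. A compactly supported smooth vector field on $\Omega \setminus \ov U$ tangent to these arcs, built via a partition of unity, has a time-one flow $\Phi \colon \ov\Omega \to \ov\Omega$ that is a $C^2$-diffeomorphism, equals the identity in a neighborhood of $\partial\Omega$, and satisfies $\Phi(p_i) = q_i$. Setting $\psi := \psi_1 \circ \Phi^{-1}$, the chain rule shows $\{\dl\psi = 0\} = \Phi(\{p_1, \ldots, p_N\}) \subset \omega_0$, hence $|\dl \psi| > 0$ on the compact set $\ov\Omega \setminus \omega_0$; the remaining properties ($\psi \in C^2(\ov\Omega)$, $\psi > 0$ on $\Omega$, $\psi = 0$ on $\partial\Omega$) are inherited from $\psi_1$ because $\Phi$ fixes a neighborhood of $\partial\Omega$.

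The main obstacle I expect is not analytic but topological: ensuring that $\Phi$ is a genuine diffeomorphism of the closed manifold-with-boundary $\ov\Omega$ and that a single ambient isotopy transports \emph{every} $p_i$ into $\omega_0$ without creating new critical points. Both concerns are addressed by the disjointness of the arcs $\gamma_i$ and by the elementary fact that the critical set of a smooth function is invariant under diffeomorphic pull-back; the construction is essentially the one due to Fursikov and Imanuvilov in~\cite{fursikov}.
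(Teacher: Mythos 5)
Your construction is correct and is essentially the standard one from the reference the paper cites (Fursikov and Imanuvilov~\cite{fursikov}); the paper itself gives no proof of this lemma, only the citation. Both the reference and your argument use the same two ingredients: produce a $C^2$ function vanishing on $\partial\Omega$, positive inside, with nonvanishing gradient near $\partial\Omega$ (Hopf's lemma applied to $-\Delta\psi_0=1$), perturb compactly to make it Morse, and then transport the finitely many nondegenerate critical points into $\omega_0$ by a diffeomorphism of $\ov\Omega$ equal to the identity near the boundary.
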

%There the lemma is proved for the case when $\Omega$ has smooth boundary, but one can easily infer that the same conclusion holds when $\Omega$ has $C^2$ boundary. Indeed, the boundary can be broken into finitely many pieces that are the graph of a $C^2$ function. Consider a partition of unity subordinate to this partition of the boundary together with one more piece in the interior. For each of the boundary pieces, consider a smooth function that approximates the given $C^2$ function.

Making use of $\psi$ define, for $\lambda > 0$, functions $\alpha$ and $\phi: Q \rightarrow \set R$ by
\begin{equation}
\alpha(x, t) = \frac{e^{\lambda \psi(x)} - e^{2 \lambda \|\psi\|_{C(\overline{\Omega})}}} {t (T - t)},
\phi(x, t) = \frac{e^{\lambda \psi(x)}} {t (T - t)}.
\lb{def_ap}
\end{equation}

Also consider the auxiliary functions
\begin{equation}
\alpha_0(t) = \frac {1 - e^{2\lambda \|\psi\|_{C(\overline{\Omega})}}}{t (T - t)}, \phi_0(t) = \frac 1 {t (T - t)}.
\end{equation}
Note that that $\alpha_0 \leq \alpha \leq \alpha_0 (1 - \eta(\lambda))$, $\phi_0 \leq \phi \leq \eta^{-1}(\lambda) \phi_0$, where $\eta(\lambda) =~e^{-\lambda \|\psi\|_{C(\overline{\Omega})}}$.

Our main result is the following:
\begin{theorem}
Assume that $a \in C^2(\set R)$ and $a'(y)>c$ on $(-\epsilon, \epsilon)$. Consider $y_s$ a stationary solution of (\ref{dif}), with $a(y_s) \in W^{2, q}(\Omega)$, and $T>0$. Then there is $C(a, y_s, T)$ such that for all $y_0$ with
\be
\|y_0 - y_s\|_{W^{1, n}_0(\Omega)} + \|y_0-y_s\|_{C(\ov \Omega)} \leq C(a, y_s, T),
\ee
there exists a control $u$ that produces a solution $y$ of (\ref{dif}) with $y_0$ initial data and $y(T) = y_s$. Furthermore, for some $q>n$ and any $\lambda \geq \lambda_0(a, y_s, T)$, $s \geq s_0(\lambda)$,
\begin{equation}
\|e^{-s \alpha_0}u\|_{L^{\infty}(Q)} \leq C_{s, \lambda} C_{a, y_s, T} \|y_0 - y_s\|_2^{2/q}.
\label{control}
\end{equation}
The controlled solution $y$ has the property that
\begin{equation}
\|t(y-y_s)\|_{W^{1, \infty}(0, T; L^q(\Omega)) \cap L^{\infty}(0, T; W^{1, \infty}(\Omega))} \leq C_{a, y_s, T},
\ee
%and, for all $2 \leq q \geq 2$$0 \leq i \leq N$ and $q_i$ as in (\ref{def_q}),
%\|e^{-s\alpha}\phi_0^{-i}(y-Y)\|_{C([0, T/2]; L^{q_i}(\Omega)) \cap L^{q_i}(0, T/2; L^{q_{i+1}}(\Omega))} + \\
\be
\|e^{-s\alpha_0}(y-y_s)\|_{C([T/2, T]; L^{q}(\Omega))} \leq C_{a, y_s, T} \|y_0 - y_s\|_2^{2/{q}},
\label{control2}
\end{equation}
as well as
\begin{equation}
\|y-y_s\|_{L^{\infty}(Q)} \leq C_{a, y_s, T}(\|y_0-y_s\|_2^{2/q} + \|y_0-y_s\|_{\infty}).
\label{estinfinit}
\end{equation}
The constants depend on $a$ and $y_s$ by means of $\epsilon$, $\|a'\|_{C^1[-\epsilon, \epsilon]} + \|1/a'\|_{C[-\epsilon, \epsilon]}$ and $\|y_s\|_{W^{2, q}(\Omega)}$, respectively.
\label{2nd}
\end{theorem}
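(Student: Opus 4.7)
The plan is to convert the quasilinear problem into a linear-looking parabolic equation by introducing $v = a(y) - a(y_s)$, which is bijective since $a' > 0$. Subtracting (\ref{stationar}) from (\ref{dif}) gives $y_t = \Delta v + mu$, and multiplication by $a'(y)$ produces
\begin{equation*}
v_t - a'(y)\Delta v = a'(y)\, mu, \quad v(0) = a(y_0) - a(y_s), \quad v \mid_{\Sigma} = 0.
\end{equation*}
The condition $y(T) = y_s$ translates to $v(T) = 0$, so the original controllability problem is equivalent to null controllability of this equation in $v$. I would then freeze the coefficient $a'(y)$ to $a'(\tilde y)$ for a fixed auxiliary function $\tilde y$ in an appropriate weighted class, solve the resulting linear equation, and close the loop by a fixed point argument.

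For the frozen linear equation I would prove a global Carleman inequality for the adjoint, using the weights built from $\alpha$ and $\phi$ of (\ref{def_ap}) and the Fursikov function $\psi$ of Lemma~\ref{lema}. The coefficient $a'(\tilde y)$ inherits $W^{1,q}$ regularity from $a(y_s) \in W^{2,q}$ and from the fixed-point class, which is enough to dominate the commutators arising from conjugating $-\partial_t - a'(\tilde y)\Delta$ by $e^{-s\alpha}$. By the standard duality/minimization argument the Carleman estimate delivers null controllability of the linear equation together with a weighted $L^2$ control bound $\|e^{-s\alpha_0}u\|_{L^2(Q)} \leq C\|v(0)\|_2$. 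To reach the $L^\infty$ bound (\ref{control}) I would apply weighted parabolic regularity in $L^q$ spaces followed by the embedding $W^{2,q}(\Omega) \hookrightarrow L^\infty(\Omega)$, valid because $q > n$; the estimates (\ref{control2}) and (\ref{estinfinit}) would follow from the same regularity applied to the equation satisfied by $v$ and transferred back to $y$ via the inverse of $a$.

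With the linear theory in place, I would solve the nonlinear problem by a Schauder fixed point for the map $\tilde y \mapsto y$, where $y = a^{-1}(v + a(y_s))$ and $v$ is the controlled trajectory associated with the frozen coefficient $a'(\tilde y)$. The domain of the map would be a small ball in the weighted space dictated by (\ref{control2})--(\ref{estinfinit}); the smallness of $\|y_0 - y_s\|_{W^{1, n}_0(\Omega)} + \|y_0 - y_s\|_{C(\overline{\Omega})}$, combined with the $2/q$ exponent in (\ref{control})--(\ref{control2}) coming from interpolating $\|v(0)\|_2$ after the change of unknown, guarantees that this ball is mapped into itself, while weighted parabolic regularity provides the compactness needed for Schauder.

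The main obstacle lies in the $L^\infty$ estimates (\ref{control}) and (\ref{estinfinit}): Carleman inequalities naturally yield only weighted $L^2$ information, so one must bootstrap carefully through weighted $L^q$ spaces while tracking the singular weights $e^{-s\alpha_0}$ as $t \to 0$ and $t \to T$. The assumptions $n > 1$, $q > n$ and $a(y_s) \in W^{2,q}(\Omega)$ enter precisely here, both to invoke the $W^{2,q} \hookrightarrow L^\infty$ embedding and to apply Ladyzhenskaya--Solonnikov--Uraltseva type estimates to the linearized equation with $W^{1,q}$ coefficient. A secondary difficulty is to arrange the fixed-point ball so that $\tilde y$ and $y$ stay inside the interval $(-\epsilon, \epsilon)$ on which $a' > c$; this is why the $C(\overline{\Omega})$ smallness of $y_0 - y_s$ appears explicitly in the hypothesis, in addition to the $W^{1,n}_0$ smallness needed to control the coefficient $a'(\tilde y)$ in $W^{1,n}$.
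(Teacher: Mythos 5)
Your change of variable $v = a(y)-a(y_s)$ and the resulting linearization $v_t - a'(\tilde y)\Delta v = a'(\tilde y)\,mu$ is a genuinely different route from the paper, which keeps the unknown $y$ and linearizes in divergence form: $y_t - \div(a'(z)\nabla y) = mu + f$. Both are legitimate starting points; the divergence form makes the Carleman analysis and the weighted energy estimates in $L^{q_i}$ spaces cleaner (the adjoint is again in divergence form), whereas your non--divergence form requires more care with the adjoint, since conjugating $\tfrac{1}{a'(\tilde y)}\partial_t - \Delta$ produces time-derivative terms $\partial_t(1/a'(\tilde y))$ whose control then enters the Carleman weight estimates. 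This is manageable, and the rest of your outline (Carleman for the frozen linear problem, duality/minimization, weighted parabolic bootstrap to $L^\infty$, Schauder on a small ball, and localizing $a$ to $(-\epsilon,\epsilon)$ via the $C(\overline\Omega)$ smallness) is the same architecture as the paper.

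The genuine gap is the treatment of the initial data. As stated, the Schauder fixed-point argument needs the initial time derivative $y_t(0)=\Delta a(y_0)+f$ to be in $L^q(\Omega)$, i.e.\ $a(y_0)\in W^{2,q}(\Omega)$, because the $W^{1,\infty}(0,T;L^q)\cap L^\infty(0,T;W^{1,\infty})$ bound on the controlled solution (which makes the fixed-point set compact and self-mapping) is obtained by differentiating the equation in $t$ and estimating $\|y_t(0)\|_q$. Your hypotheses only provide $y_0-y_s\in W^{1,n}_0(\Omega)\cap C(\overline\Omega)$, which gives no control on $\Delta a(y_0)$. The paper resolves this by setting the control to zero on an initial interval $[0,T_0]$ and using two quantitative smoothing estimates for the uncontrolled flow $Y$ (Propositions~\ref{prop1} and~\ref{prop2}): starting from $W^{1,n}_0$ data, one obtains $\|\Delta a(Y(T_0))+f\|_q \le C\|y_0-y_s\|_2^{2/q}$, and only then applies the fixed-point machinery on $[T_0,T]$. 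This is precisely where the exponent $2/q$ in \eqref{control}--\eqref{control2} and the degenerate weight $t$ in the $W^{1,\infty}(0,T;L^q)$ estimate originate, not from ``interpolating $\|v(0)\|_2$ after the change of unknown'' as your sketch claims. Without this smoothing step there is no way to start the iteration through weighted $L^{q_i}$ spaces, and the self-mapping of the fixed-point ball cannot be established from $W^{1,n}_0\cap C$ data alone.

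Secondarily, the bootstrap from the weighted $L^2$ Carleman bound to the $L^\infty$ control bound is not a one-step application of $W^{2,q}\hookrightarrow L^\infty$: the right-hand side of the dual equation is only in weighted $L^2$, so the paper iterates $N \approx \log q/\log(n/(n-2))$ times through $L^{q_i}(0,T;L^{q_{i+1}})$ with shifting powers of $\phi_0$, and only at the final step invokes $W^{2,1}_q(Q)\subset L^\infty(Q)$. Your proposal acknowledges that one must ``bootstrap carefully,'' which is correct, but any complete proof will need to spell out this finite chain of Sobolev gains, and the choice of the parameters $\zeta_i$ defining the fixed-point set is tied to it.
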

\begin{observation}
The control $u$ and the controlled solution $y$ depend continuously on the initial data $y_0$ as a function from $C(\ov \Omega)$ to $L^{\infty}(Q) \times C(\ov Q)$.
\end{observation}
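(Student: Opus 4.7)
The plan is to argue by sequential compactness, followed by identification of the limit. Fix $y_0^k \to y_0$ in $C(\ov\Omega)$, with all $y_0^k$ inside the smallness ball of Theorem \ref{2nd}, and let $(u_k, y_k)$ denote the control/solution pairs returned by the construction. Since the right-hand sides of (\ref{control}), (\ref{control2}), (\ref{estinfinit}), and the bound on $t(y_k - y_s)$ depend on $y_0^k$ only through $\|y_0^k - y_s\|_{C(\ov\Omega)}$ (which controls $\|y_0^k - y_s\|_2$), the sequence $(u_k, y_k)$ is uniformly bounded in all the relevant norms.

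Next, extract a subsequence along which $e^{-s\alpha_0}u_k \stackrel{\ast}{\rightharpoonup} U$ in $L^\infty(Q)$ and $y_k \to y^*$ strongly in $C(\ov Q)$; the latter comes from Arzelà--Ascoli on $[\tau, T]\times\ov\Omega$ for each $\tau > 0$, using the uniform bounds on $\dl y_k$ in $L^\infty$ and on $\partial_t y_k$ in $L^\infty(L^q)$ after multiplication by $t$, combined with equicontinuity at $t = 0$ inherited from $y_0^k \to y_0$ and the parabolic regularization at positive times. Passing to the limit in (\ref{dif}) and in $y_k(T) = y_s$ shows that $(u^*, y^*) := (e^{s\alpha_0}U, y^*)$ is itself an admissible control/solution pair for the initial datum $y_0$, satisfying the same estimates.

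The final step is to identify this limit with the pair $(u, y)$ assigned to $y_0$ by the construction, so that subsequential convergence becomes full convergence. This is where one leverages the precise form of the proof of Theorem \ref{2nd}, which produces $(u, y)$ as the fixed point of a contraction in a weighted Carleman ball whose data depends continuously (indeed linearly, via the HUM-type null-control operator for the linearized equation) on $y_0$. Every subsequential limit lies in the same ball and solves the same fixed-point equation, so uniqueness of the contraction forces $(u^*, y^*) = (u, y)$. The principal obstacle is precisely this last step, since continuous dependence (unlike existence) requires that the theorem's selection be canonical rather than an abstract Schauder choice; once the contractive nature of the scheme and the continuous dependence of its data on $y_0$ are read off from the construction, the compactness step of the preceding paragraph does the rest.
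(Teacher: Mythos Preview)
The paper does not supply a separate proof of this remark, so your proposal has to be judged on its own terms. The compactness portion is reasonable: the uniform estimates of Theorem \ref{2nd} together with the $t$-weighted $W^{1,\infty}(0,T;L^q)\cap L^{\infty}(0,T;W^{1,\infty})$ bound do give enough equicontinuity to extract a $C(\ov Q)$-convergent subsequence of $y_k$ and a weak-$*$ limit of $e^{-s\alpha_0}u_k$ in $L^{\infty}(Q)$, and passing to the limit in the weak formulation of (\ref{dif}) is routine.

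The genuine gap is in your identification step. You write that the pair $(u,y)$ is ``the fixed point of a contraction in a weighted Carleman ball'' and that ``uniqueness of the contraction forces $(u^*,y^*)=(u,y)$.'' This misreads the construction: the paper obtains the controlled solution via Schauder's theorem applied to the map $F$ (defined in (\ref{F}) as the minimizer of the functional $Q_z$), not via Banach's contraction principle, and Schauder yields no uniqueness. The closed-graph argument around (\ref{rel}) only shows that, for \emph{fixed} coefficient function $z$, the optimality system has a unique solution; it says nothing about uniqueness of the fixed point $z=F(z)$ of the full nonlinear scheme. Consequently, as written, neither the well-definedness of the selection $y_0\mapsto(u,y)$ nor the identification of subsequential limits with it is justified.

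To close the gap you would need an honest uniqueness argument for the coupled nonlinear system consisting of (\ref{dif}) and (\ref{dual}) with $b=a'(y)$. The natural route is to subtract two such systems, pair them as in the paper's closed-graph proof, and absorb the extra commutator terms $\div\big((a'(y_1)-a'(y_2))\dl y_2\big)$ and $\div\big((a'(y_1)-a'(y_2))\dl p_2\big)$ using the smallness of $\zeta$ together with the weighted Carleman bounds already established for $\dl p$ and $\dl y$. That argument is plausible but is not contained in your proposal, and it is the substantive work that the remark requires.
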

\begin{observation}
Let $T>0$. Given an arbitrary (not necessarily stationary) solution $y_e$ of equation (\ref{dif}), the equation is said to be locally exactly controllable to $y_e$ in time $T$ such that for any $y_0$ in a neighborhood of $y_e(0)$ there exists a control $u$ depending continuously on $y_0$ such that, by plugging $u$ in (\ref{dif}), one obtains a solution $y$ such that $y(T) \equiv y_e(T)$. The method of proof given in this paper can be used, with minimal modifications, to prove the local exact controllability of equation (\ref{dif}) to any $L^{\infty}(0, T; W^{2, q}(\Omega)) \cap W^{1, \infty}(0, T; L^q(\Omega))$ solution $y_e$, for $q>n$.
\end{observation}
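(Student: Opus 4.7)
The approach is to reduce the general case to the one already handled in Theorem~\ref{2nd} by working with the difference $z = y - y_e$, which satisfies
\[
z_t - \Delta\bigl(a(y_e + z) - a(y_e)\bigr) = m u, \quad z(0) = y_0 - y_e(0), \quad z \mid_\Sigma = 0,
\]
so that the target $y(T) = y_e(T)$ becomes $z(T) = 0$. Writing $a(y_e + z) - a(y_e) = b(x,t)\, z + r(x,t,z)$ with $b(x,t) = a'(y_e(x,t))$ and $r$ of order $z^2$, one obtains a linearization of exactly the same structural form as for the stationary case, with the single substantive change being that the diffusion coefficient $b$ now depends on $t$ as well as $x$.

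The regularity hypothesis $y_e \in L^\infty(0,T;W^{2,q}(\Omega)) \cap W^{1,\infty}(0,T;L^q(\Omega))$ with $q>n$ is precisely what is needed so that $b$ behaves essentially like $a'(y_s)$ did. The Sobolev embedding $W^{2,q}\hookrightarrow C^1(\ov\Omega)$ gives $\dl y_e\in L^\infty(Q)$, hence $\dl b\in L^\infty(Q)$ and $\nbx<\infty$, while $b_t = a''(y_e)(y_e)_t$ lies in $L^\infty(0,T;L^q(\Omega))$. With these bounds I would rerun the Carleman estimate for the adjoint equation $-p_t - b\Delta p = h$ used to prove the linear null-controllability: tracking where $b$ enters, the principal terms of the form $s^3\lambda^4\int\phi^3|p|^2 + s\lambda^2\int\phi|\dl p|^2$ are unchanged, and the only new contributions come from commutator terms produced when $\partial_t$ is conjugated past $b$. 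These are absorbed for $s$ and $\lambda$ large by a weighted H\"older argument in the spatial variable, using the Sobolev exponent $q>n$ to replace the $L^\infty(Q)$ bound on $b_t$ that one had (trivially, $b_t=0$) in the stationary case.

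Once the linear null-controllability with Carleman weights is reestablished in this time-dependent setting, the nonlinear fixed-point argument of Theorem~\ref{2nd} carries over essentially verbatim: one iterates on $z$ inside the same weighted ball, uses that $r(x,t,z)$ is quadratic in $z$ and controlled through the same $W^{1,n}_0\cap C(\ov\Omega)$ smallness assumption, and recovers the analogues of \eqref{control}--\eqref{estinfinit} with $y_s$ replaced throughout by $y_e$ (and $y_s$ by $y_e(T)$ in the terminal identification). Continuous dependence of the control $u$ and the solution $y$ on $y_0$ follows from the continuity of this fixed-point map, exactly as in the stationary-target statement. The main obstacle, and the only step that requires genuine work rather than substitution of symbols, is the first one: extending the Carleman estimate to a coefficient $b$ whose time derivative is merely in $L^\infty(0,T;L^q(\Omega))$ rather than in $L^\infty(Q)$, since this prevents one from simply reading off the stationary proof and forces the absorption of the new $b_t$-terms to be done with weights and Sobolev exponents calibrated to $q>n$.
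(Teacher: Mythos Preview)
Your overall reduction is sound, but you have misidentified where the work lies. You claim that the ``main obstacle'' is extending the Carleman estimate to a coefficient $b$ with $b_t\in L^\infty(0,T;L^q(\Omega))$ rather than $b_t\in L^\infty(Q)$, and that in the stationary case one had ``trivially, $b_t=0$''. This is not how the paper is organized. Even for the stationary target $y_s$, the linearization (\ref{y}) uses $b=a'(z)$ with $z$ the auxiliary function in the Schauder iteration, not $b=a'(y_s)$; hence $b$ is already time-dependent in the paper's proof. Accordingly, Proposition~\ref{car} is stated and proved from the outset under the hypothesis $\|b_t\|_{L^\infty(0,T;L^n(\Omega))}+\|\dl b\|_{L^\infty(Q)}\leq\zeta$, and the paper explicitly notes after its statement that this is an improvement over requiring all derivatives of $b$ in $L^\infty(Q)$. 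Since $q>n$, the regularity $b_t=a''(y_e)(y_e)_t\in L^\infty(0,T;L^q(\Omega))$ that you obtain from the trajectory hypothesis already falls inside the existing hypothesis of Proposition~\ref{car}; no new Carleman work is needed at all.

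What actually changes when $y_s$ is replaced by a trajectory $y_e$ is the collection of regularity and smoothing estimates: Propositions~\ref{propnoua}, \ref{prop1}, \ref{prop2}, and \ref{Yqq}, as well as the definition of the set $B$ in (\ref{B}) and the functional $Q_z$ in (\ref{F}), must be rewritten with $y_e(t)$ in place of $y_s$. The hypothesis $y_e\in L^\infty(0,T;W^{2,q}(\Omega))\cap W^{1,\infty}(0,T;L^q(\Omega))$ is exactly what makes each of those arguments go through with the same constants (e.g.\ $\|\dl y_e\|_{L^\infty(Q)}$ replaces $\|\dl y_s\|_\infty$, and the extra term $(y_e)_t$ appearing when one differentiates (\ref{yy}) in $t$ is controlled by the $W^{1,\infty}(0,T;L^q)$ bound). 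These really are substitutions of symbols plus routine bookkeeping, which is why the paper calls the modifications ``minimal''. Also note that the adjoint equation in the paper is in divergence form, $p_t+\div(b\,\dl p)=g$, not $-p_t-b\Delta p=h$ as you wrote.
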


\subsection{History of the Problem}
This paper is based on the method introduced by Imanuvilov and Fursikov in 1992-93 over the course of several articles, as for example \cite{fursikov3}, and presented in a systematical manner in \cite{fursikov}. The method allowed the proof of observability (and, by duality, controllability) for solutions of parabolic equations over arbitrary domains, with nonconstant coefficients, as opposed to previous controllability results, such as \cite{russell} and \cite{henry}.

In \cite{fursikovrusa}, Imanuvilov proved the exact controllability of the semilinear parabolic equation
\begin{equation}
\left\{\begin{array}{ll}
\LL y + f(t, x, y) = g + u \text{ on }Q = [0, T] \times \Omega\\
y\mid_{[0, T]\times\partial\Omega} = 0,\ y(0, x) = v_0(x), y(T, x) = v_1(x).
\end{array}\right.
\label{eq1}
\end{equation}
In this formulation, $\Omega$ is a bounded domain in $\set R^n$, $\LL$ is a parabolic operator and the control $u$ is supported on $\omega \subset \subset \Omega$. The initial state $v_0$ is taken in $H^1_0(\Omega)$ and the nonlinear term $f$ satisfies $f(t, x, 0) = 0$ and is globally Lipschitz with respect to $y$. Imanuvilov proves the exact controllability of \eqref{eq1} to states $v_1$ for a class of states containing $0$, together with the estimate
\begin{multline}
\|y\|_{L^2([0, T]; H^2(\Omega))} + \|y_t\|_{L^2(Q)} + \|e^{s \eta} y\|_{L^2(Q)} + \|u\|_{L^2(Q)} \leq\\
C(\|v_0\|_{H^1_0(\Omega)} + \|e^{s \eta} g\|_{L^2(Q)}),
\end{multline}
where $\eta$ is an auxiliary function, defined by the author, comparable to $(T-t)^{-2}$.

The main elements of Imanuvilov's proof are a Carleman-type inequality for the linear problem and a fixed-point principle, by means of which the controllability result is extended to the semilinear problem.

This method was afterwards used by several other authors to prove more general controllability results. Thus, Fernandez-Cara, Zuazua \cite{fernandez} proved the global exact controllability of the semilinear equation $y_t-\Delta y + f(y)=mu$, with $|f(s)|<ks\log |s|$, as well as a negative result -- the equation cannot be controlled even for $f$ of moderate growth, $|f(s)| \approx s \log^{2+\epsilon}(1+|s|)$. The same authors also proved results concerning nonlinearities involving the gradient and on unbounded domains.

Barbu \cite{barbu2} showed that the preceding result can be extended to the case when $|f(x, t, y)| \leq k_1 |y| (1+o(|y|)(1+\log |y|)^{3/2})$.

In the current paper and in \cite{sine}, controllability is proved for the quasilinear parabolic equation (\ref{dif}). This is not the most general quasilinear equation, insofar as the coefficient of the main term is a function of $y$, but not of $\dl y$. Nevertheless, it has physical relevance, since it describes the diffusion of fluids within porous media with a nonlinear diffusion coefficient. Whereas \cite{sine} deals with the one-dimensional problem, here we treat the case when the dimension is at least two.

This paper is fundamentally based on the method of proof of \cite{fursikov}. The computations (especially those in the proof of the Carleman inequality) closely follow those of \cite{barbu}.

%\begin{observation}
%More generally, equation (\ref{dif}) is locally controllable to the stationary solution $y_s \in W^{2, q}(\Omega)$ if $a'$ is positive and $a$ is $C^2$ on some neighborhood of $y_s(\Omega)$, instead of $\set R$.
%\end{observation}

\section{Overview of the Proof} Firstly, we linearize equation (\ref{dif}) by introducing an auxiliary function $z$:
\begin{equation}
\begin{array}{l}
y_t - \div (a'(z) \dl y) = m u + f \text{ on } Q\\
y \mid_{\Sigma} = 0,\ y(0) = y_0.
\end{array}
\label{y}
\end{equation}
Indeed, by making $y = z$, one retrieves the original equation (\ref{dif}).

Denote $b = a'(z)$. The second step is to prove a Carleman-type inequality concerning the backward dual (\ref{p}) to the linear equation (\ref{y}):
\begin{equation}
\begin{array}{l}
p_t + \div (b \dl p) = g\\
p \mid_{\Sigma} = 0,\ p \mid_{t = T} = p_T.
\end{array}
\label{p}
\end{equation}

The inequality that needs proof is as follows:
\begin{proposition}
Assume that $0 < \mu \leq b \leq M < \infty$, where the constants $\mu$ and $M$ are fixed. Whenever $\|b_t\|_{L^{\infty}(0, T; L^n(\Omega))} + \|\dl b\|_{L^{\infty}(Q)} \leq \zeta < \infty$ and for every $\lambda \geq \lambda_0(\zeta) = C (1 + \zeta^2)$, $s \geq s_0(\lambda)$, any solution $p$ of (\ref{p}) satisfies the bound
\begin{multline}
\int_Q {e^{2s\alpha}(s^3 \lambda^3 \phi^3 p^2 + s \lambda \phi (\dl p)^2) dx dt} \leq \\
\leq C (1 + \zeta) \int_{Q_{\omega}} {e^{2s\alpha} s^3 \lambda^3 \phi^3 p^2 dx dt} + \int_Q {e^{2s\alpha} g^2 dx dt},
\label{Carleman}
\end{multline}
where $C$ is a constant independent of $p$, $g$, $s$, $\lambda$, and $\zeta$, but which may depend on $T$, $\Omega$, and $\omega$, as well as $\mu$ and $M$.
\label{car}
\end{proposition}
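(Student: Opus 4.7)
The plan is to follow the Fursikov--Imanuvilov conjugation strategy, adapted to the variable-coefficient operator $\partial_t + \div(b\,\dl\,\cdot\,)$. The first step is to introduce the weighted unknown $w := e^{s\alpha}p$, whose equation, obtained by substituting $p = e^{-s\alpha}w$ into \eqref{p}, reads $w_t - s\alpha_t w + \div(b\,\dl w) - 2sb\,\dl\alpha\cdot\dl w + s^2 b|\dl\alpha|^2 w - s\,\div(b\,\dl\alpha)\,w = e^{s\alpha}g$. I would split this as $P_1 w + P_2 w = e^{s\alpha}g - R w$, with the (formally) symmetric part $P_1 w := \div(b\,\dl w) + s^2 b|\dl\alpha|^2 w$, the antisymmetric part $P_2 w := w_t - 2sb\,\dl\alpha\cdot\dl w - s\alpha_t w$, and the lower-order remainder $R w := -s\,\div(b\,\dl\alpha)w$.

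Squaring and integrating yields $\|P_1 w\|_{L^2(Q)}^2 + \|P_2 w\|_{L^2(Q)}^2 + 2(P_1 w, P_2 w)_{L^2(Q)} \leq 2\|e^{s\alpha}g\|_{L^2(Q)}^2 + 2\|R w\|_{L^2(Q)}^2$. The core computation is to expand the cross-term $(P_1 w, P_2 w)$ through repeated integration by parts in $x$ and $t$. Using $\dl\alpha = \lambda\phi\,\dl\psi$ and the fact that the leading part of the Hessian of $\alpha$ is $\lambda^2\phi\,\dl\psi\otimes\dl\psi$, the dominant positive contributions are $\int_Q s^3\lambda^4\phi^3 b^2|\dl\psi|^4 w^2\,dx\,dt$ and $\int_Q s\lambda^2\phi\, b\,|\dl\psi|^2|\dl w|^2\,dx\,dt$. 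All boundary contributions vanish, in space because $w\mid_\Sigma = 0$, and in time because $\alpha(x,t)\to -\infty$ as $t\to 0$ or $t\to T$.

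The critical difficulty is controlling the errors introduced by the variable coefficient $b$. Each integration by parts generates pieces containing $\dl b$ or $b_t$. Contributions involving $\dl b$, together with $\|Rw\|^2$, are bounded by $\zeta^2\int s^2\lambda^2\phi^2 w^2$ and related expressions, and are absorbed by a small fraction of $\int s^3\lambda^4\phi^3 w^2$ provided $\lambda \geq C(1+\zeta^2)$. For contributions involving $b_t$, which is only controlled in $L^\infty(0, T; L^n(\Omega))$, I would apply H\"older's inequality in $x$ with exponent $n$, picking up a factor $\|b_t\|_{L^\infty(0,T; L^n(\Omega))}\,\|w\|_{L^{2n/(n-1)}(\Omega)}^2$, and then interpolate between $L^2$ and the Sobolev embedding $H^1_0(\Omega) \hookrightarrow L^{2n/(n-2)}(\Omega)$ applied to $w$; the outcome is again a factor of $\zeta$ times the dominant quantities, absorbed once $\lambda$ is sufficiently large.

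The final step is the localization of the right-hand side. On $\Omega\setminus\omega_0$ one has $|\dl\psi|\geq c > 0$ by Lemma \ref{lema}, so the target integrand is already controlled there by the dominant positive contributions from $(P_1 w, P_2 w)$. On $\omega_0$, I would introduce a cutoff $\chi \in C^\infty_c(\omega)$ with $\chi\equiv 1$ on $\omega_0$, pair the equation for $w$ with $\chi^2 s\lambda^2\phi\, w$, and integrate by parts in $x$; the resulting inequality bounds $\int_{Q_{\omega_0}}s\lambda^2\phi|\dl w|^2$ by $\int_{Q_\omega}s^3\lambda^3\phi^3 w^2$ plus small multiples of $\|P_1 w\|^2 + \|P_2 w\|^2$. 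The factor $(1+\zeta)$ multiplying the local observation term in \eqref{Carleman} arises from the manipulations in this localization that inevitably differentiate $b$. Returning to $p = e^{-s\alpha}w$ recovers \eqref{Carleman}. The main obstacle is bookkeeping: every lower-order term must be tracked so that the power of $\zeta$ on the right never exceeds the absorbing power of $\lambda$, which is exactly why the threshold $\lambda_0(\zeta)$ scales quadratically in $\zeta$.
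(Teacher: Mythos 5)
Your outline coincides with the paper's strategy at the structural level: conjugate with the Carleman weight ($w=e^{s\alpha}p$, the paper calls it $z$), split the conjugated operator into symmetric and antisymmetric parts, expand the cross term, estimate the leading positive contributions, and pass from $\omega_0$ to $\omega$ with a cutoff; the $\lambda_0(\zeta)\sim 1+\zeta^2$ threshold is also exactly what the paper uses to absorb the $\dl b$ errors.

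There is, however, a genuine gap in your treatment of the $b_t$ contribution. The term generated by differentiating the symmetric part $\div(b\,\dl\,\cdot)$ in time is $\int_Q b_t\,|\dl w|^2\,dx\,dt$ (appearing from $\int_Q B_t(t)z\cdot z$ after an integration by parts), \emph{not} $\int_Q b_t\,w^2\,dx\,dt$; H\"older with $b_t\in L^\infty(0,T;L^n(\Omega))$ then yields a factor $\|\dl w\|^2_{L^{2n/(n-1)}(\Omega)}$. You propose to close this by interpolating between $L^2$ and the Sobolev embedding $H^1_0(\Omega)\hookrightarrow L^{2n/(n-2)}(\Omega)$ applied to $w$, but that only controls $\|w\|_{L^{2n/(n-2)}}$ in terms of $\|\dl w\|_{L^2}$; it says nothing about $\|\dl w\|_{L^{2n/(n-2)}}$. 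To obtain the latter one needs a second-order (elliptic) regularity estimate: using the conjugated equation and $b\geq\mu>0$ one bounds $\|\,\phi^{-1/2}\Delta w\,\|_{L^2(Q)}$ by the dominant quantity $D(s,\lambda,w)$ plus $\|e^{s\alpha}g\|_{L^2(Q)}^2$, and only then interpolation gives $\|\dl w\|_{L^2(0,T;L^{2n/(n-1)}(\Omega))}$ under control. This is precisely the content of the paper's Lemma \ref{regularitate}, and your plan as written omits it; without that elliptic step the $b_t$ error cannot be absorbed, so the proof does not close.
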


Recall that $Q_{\omega} = \omega \times (0,T)$.

In particular, the assumptions of Proposition \ref{car} imply that $b$ is H\"older continuous and that its modulus of continuity depends only on $\zeta$. Since $b$ is given by $b=a'(z)$, (\ref{Carleman}) holds uniformly for all $z$ in a bounded set in $L^{\infty}(0, T; W^{1, \infty}(\Omega)) \cap W^{1, \infty}(0, T; L^n(\Omega))$. We also remark that the independence of $C$ in regard to $s$ and $\lambda$ will be used later in the proof.

This statement of the Carleman inequality contains a minor but useful improvement over, for instance, the one given in \cite{yamamoto}. Namely, we do not require that all derivatives of $b$ should be in $L^{\infty}(Q)$.

The observability of the dual equation, established by (\ref{Carleman}), is equivalent to the controllability of the linear equation (\ref{y}). Under some specific conditions on the auxiliary function $z$, we show that the linear equation (\ref{y}) admits a solution $y$ which fulfills the same conditions. In order to pass from the linear equation (\ref{y}) to the quasilinear equation (\ref{dif}), we then apply the Schauder fixed point theorem in the Banach space $C(\ov Q)$.

%In the course of the proof a smallness assumption on $T$ will appear. However, it is unnecessary, since if the equation is controllable for time $T$ it is also controllable for any larger time.

Fix $q \in (n, \infty)$ and take $\ds N = \left\lceil\frac {\log q - \log 2}{\log{n} - \log{(n-2)}}\right\rceil+1$ for $n>2$ and $N=1$ for $n=2$. Consider the sequence
\be
(q_i)_{0 \leq i \leq N},\ q_0 = 2,\ q_i = 2 \Big(\frac {n}{n-2}\Big)^{i-1},\ 0<i<N,\ q_N = q.
\lb{def_q}
\ee

Define the compact set $B \subset C(\ov Q)$, depending on the parameters $\zeta_i$ and $\zeta$, as follows:
\begin{equation}
\begin{split}
B = \{y \in C(\ov Q) \mid& \|e^{s\alpha} \phi^{-i} (y-Y)\|_{L^{q_i}(0, T/2; L^{q_{i+1}}(\Omega))} + \\
& + \|e^{s\alpha} \phi^{-i} (y-y_s)\|_{L^{q_i}(T/2, T; L^{q_{i+1}}(\Omega))} \leq \zeta_i,\ 0 \leq i \leq N,\\
&y(0)=y_0,\ y \mid_{\Sigma} = 0,\ \|y_t\|_{C([0, T]; L^q(\Omega))} \leq \zeta,\ \|\dl(y-y_s)\|_{L^{\infty}(Q)} \leq \zeta\},
\label{B}
\end{split}
\end{equation}
where $Y$ is the uncontrolled solution of the quasilinear equation (\ref{omogen}) obtained by taking $u=0$ in (\ref{dif}):
\begin{equation}
Y_t - \Delta a(Y) = f,\ Y(0)=y_0,\ Y \mid_{\Sigma} = 0.
\label{omogen}
\end{equation}
$B$ is defined so that Carleman's inequality Proposition \ref{car} holds uniformly if $b=a'(z)$ and $z \in B$; thus, one can close the fixed point argument in $B$.

Note that, for $y \in B$, $y - Y$ vanishes exponentially at time zero and $y-y_s$ vanishes exponentially at time $T$. In particular, the latter implies that $y(T) = y_s$, but both properties will matter in the sequel.

Define the mapping $F: B \rightarrow C(\ov Q)$ by assigning to each $z$ the minimizer $F(z) = y$ of the functional
\begin{equation}
\begin{aligned}
Q_z(y, u) = &\int_{Q_{\omega}} {e^{-2s \alpha} \phi^{-3} u^2 dx dt} + \\
&+ \int_Q {  e^{-2s\alpha} (\chi_{[0, T/2]} (y(x, t) - Y(x, t))^2 + \chi_{[T/2, T]} (y(x, t) - y_s(x))^2) dx dt},
\label{F}
\end{aligned}
\end{equation}
where $y$ and $u$ satisfy (\ref{y}), with $z$ as the auxiliary function.

To $F$ we apply Schauder's Theorem, which states that if $B$ is a closed and convex set in a Banach space $X$ and $F:B \rightarrow B$ is a continuous map such that $F(B)$ is compact, then there exists $x \in B$ such that $F(x)=x$.

We also use the fact that if $X$ is a normed space, $F:B \to X$ has a closed graph, and $F(B)$ is compact, then $F$ is continuous.

Therefore, we next show the existence of a fixed point $y$ of $F$ in $B$ by means of Schauder's Theorem, for a suitable choice of the parameters $\zeta_i$ and $\zeta$. Due to the rapid growth of $e^{-2s\alpha}$, the difference $y -y_s$ will vanish at time $T$ as desired for $y \in B$. This result is contained in the following more technical statement:

\begin{proposition}
Assume that $a'$, $a''$, and $1/a'$ are bounded on $\set R$. Fix $q \in [n, \infty)$. Assume that $y_0 \mid_{\Sigma} = 0$ and $\|y_0-y_s\|_q + \|\Delta a(y_0) + f\|_q \leq C(a, y_s)$. Then, the quasilinear equation (\ref{dif}) is controllable to the stationary solution $y_s$ and for any $T$ there exists $\lambda_0$ such that for any $\lambda \geq \lambda_0$,  $s \geq s_1(\lambda)$, $\delta>0$, and $0\leq i< N$ there exists a control $u$ satisfying
\begin{equation}
\begin{array}{c}
\displaystyle{
\|e^{-s(1 - 2\eta(\lambda) - \delta) \alpha_0}u\|_{L^{\infty}(Q)} + \|e^{-s(1 - 2\eta(\lambda) - \delta) \alpha_0}\phi_0^{-1}u_t\|_{L^q(Q)} \leq }\\
\displaystyle{
\leq C_{s, \lambda, \delta} C_{y_s, a} \|y_0 - y_s\|_q, }\\
\|e^{-s\alpha} \phi_0^{1-N} u\|_{C([0, T]; L^{q_i}(\Omega)) \cap L^{q_i}(0, T; L^{q_{i+1}}(\Omega))} \leq C_{y_s, a} \|y_0 - y_s\|_{q_i}.
\end{array}
\label{est1}
\end{equation}
The controlled solution $y$ has the property that
\begin{equation}
\begin{array}{c}
\|y-y_s\|_{W^{1, \infty}(0, T; L^q(\Omega)) \cap L^{\infty}(0, T; W^{1, \infty}(\Omega))} \leq C_{y_s, a}, \\
\|e^{-s\alpha}\phi_0^{-i}(y-Y)\|_{C([0, T/2]; L^{q_i}(\Omega)) \cap L^{q_i}(0, T/2; L^{q_{i+1}}(\Omega))} + \\
\|e^{-s\alpha}\phi_0^{-i}(y-y_s)\|_{C([T/2, T]; L^{q_i}(\Omega)) \cap L^{q_i}(T/2, T; L^{q_{i+1}}(\Omega))} \leq C_{y_s, a} \|y_0 - y_s\|_{q_i}.
\end{array}
\label{est2}
\end{equation}
\label{w2i}
\end{proposition}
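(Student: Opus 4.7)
\medskip
\noindent\textbf{Proof plan.}
The plan is to carry out the Schauder fixed point scheme described in the overview. Fix $z \in B$ and set $b=a'(z)$; by the definition of $B$, $b$ satisfies the hypotheses of Proposition~\ref{car} uniformly, so the Carleman inequality (\ref{Carleman}) holds. By the standard duality between observability of the backward dual (\ref{p}) and controllability of the linear equation (\ref{y}), minimizing the functional $Q_z$ in (\ref{F}) produces a unique pair $(y,u)$ solving (\ref{y}) with $e^{-s\alpha}\phi^{3/2} u \in L^2(Q_\omega)$ and $y(T)=y_s$. The corresponding Euler--Lagrange equations express $u$ in terms of the adjoint state $p$ satisfying (\ref{p}) with $g=e^{-2s\alpha}(y-Y)\chi_{[0,T/2]}+e^{-2s\alpha}(y-y_s)\chi_{[T/2,T]}$, and the Carleman bound gives the base $L^2$-weighted estimate
\[
\int_Q e^{-2s\alpha}\phi^{-3}u^2 + \int_Q e^{-2s\alpha}(y-Y)^2\chi_{[0,T/2]} + \int_Q e^{-2s\alpha}(y-y_s)^2\chi_{[T/2,T]} \;\lesssim\; \|y_0-y_s\|_2^2,
\]
which is the seed estimate that feeds the whole scheme.

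From this $L^2$-bound I would bootstrap along the sequence $(q_i)$ defined in (\ref{def_q}). The key observation is that $w_i := e^{-s\alpha}\phi_0^{-i}(y-y_s)$ (and analogously with $Y$ on $[0,T/2]$) satisfies a linear parabolic equation of the form $\partial_t w_i - \mathrm{div}(b\nabla w_i)=\text{(source)}$ whose source can be controlled using the previous step's estimate together with the fact that the weights $e^{-s\alpha}$ absorb commutator terms thanks to the inequality $|\partial_t\alpha|+|\nabla\alpha|^2\lesssim \phi_0^{2}$. Sobolev embedding $W^{1,2}\hookrightarrow L^{n/(n-2)\cdot 2}$ pushes the integrability from $q_i$ to $q_{i+1}$, and after $N$ iterations one reaches $L^q$ with $q>n$. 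This yields the estimates for $y-Y$ and $y-y_s$ in (\ref{est2}), and dually (by Meyers-type regularity for the adjoint and $u=-e^{-2s\alpha}\phi^{-3}\chi_\omega p$) the $L^\infty$ and $L^q$ bounds on $u$ and $u_t$ in (\ref{est1}), using that $e^{-s\alpha}\leq e^{-s(1-\eta(\lambda))\alpha_0}$ and that $\phi\leq \eta^{-1}(\lambda)\phi_0$.

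Next I would verify that $F(B)\subset B$ for a suitable choice of $\zeta_i, \zeta$: the weighted $L^{q_i}(L^{q_{i+1}})$ bounds in (\ref{B}) follow directly from the bootstrap above once $\|y_0-y_s\|_{q_i}$ is small; the uniform bounds $\|y_t\|_{C([0,T];L^q)}+\|\nabla(y-y_s)\|_{L^\infty(Q)}\le \zeta$ come from maximal regularity applied to (\ref{y}) rewritten as $(y-y_s)_t-\mathrm{div}(b\nabla(y-y_s)) = mu + \mathrm{div}((b-a'(y_s))\nabla y_s)$, combined with the $W^{2,q}$-regularity of $y_s$ and $q>n$. Compactness of $F(B)$ in $C(\overline Q)$ is then automatic from Aubin--Lions, since $F(B)$ is bounded in $L^\infty(0,T;W^{1,\infty})\cap W^{1,\infty}(0,T;L^q)$. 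Continuity of $F$ reduces to the closed graph property: if $z_k\to z$ in $C(\overline Q)$ and $F(z_k)\to y^*$, then $a'(z_k)\to a'(z)$ uniformly, so one can pass to the limit in the weak formulation of (\ref{y}) and in the Euler--Lagrange equation for $Q_z$; uniqueness of the minimizer then forces $y^*=F(z)$. Schauder's theorem supplies a fixed point $y\in B$, which is a solution of (\ref{dif}) satisfying $y(T)=y_s$.

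The main obstacle I expect is the bootstrap step: carefully propagating the weighted estimates through the chain $q_i\to q_{i+1}$ while keeping track of the $(s,\lambda)$-dependence of the constants and of the commutator terms between the weight $e^{-s\alpha}\phi_0^{-i}$ and the parabolic operator. Two subtleties need attention here: first, the interior cutoff on $\omega$ used in deriving (\ref{Carleman}) must be handled at each step so that the $L^\infty$ bound on $u$ in (\ref{est1}) can be extracted (rather than merely an $L^2$-weighted bound); second, the smallness condition $\|y_0-y_s\|_q+\|\Delta a(y_0)+f\|_q\le C(a,y_s)$ must be used to guarantee that $y$ stays in a region where $a'(y)\ge c>0$, so that the truncation of $a$ outside this region (which justifies the boundedness hypotheses on $a',a'',1/a'$) is invisible to the fixed point.
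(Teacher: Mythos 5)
Your proposal follows essentially the same route as the paper: minimize the weighted quadratic functional $Q_z$ subject to the linearized equation, identify $u$ with the adjoint state $p$ via Pontryagin, invoke the Carleman inequality for the base $L^2$--weighted estimate, bootstrap the integrability along the chain $(q_i)$ using the Sobolev embedding $W^{1,2}\hookrightarrow L^{2n/(n-2)}$ and the parabolic energy estimate for the weighted variables $V_i=e^{-s\alpha}\phi_0^{-i}(y-Y)$, $W_i=e^{-s\alpha}\phi_0^{-i}(y-y_s)$, $P_i=e^{s\alpha}\phi_0^{1-i}p$, upgrade to $L^\infty$ for $u$, verify $F(B)\subset B$ using Proposition~\ref{propnoua}, and close with Schauder via the closed graph argument. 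The bottleneck you flag --- tracking commutators between the parabolic operator and the exponential weights through the bootstrap --- is indeed where the paper does the bulk of the work.

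A few concrete issues. First, your Pontryagin identity $u=-e^{-2s\alpha}\phi^{-3}\chi_\omega p$ has the weights inverted. With $Q_z$ as in (\ref{F}), the first--order condition in $u$ is $e^{-2s\alpha}\phi^{-3}u=mp$ (up to constants in $s,\lambda$), so $u=m\,p\,e^{2s\alpha}\phi^{3}$. The distinction matters: the bootstrap transfers weighted $L^{q_i}(L^{q_{i+1}})$ bounds from $p$ to $u$ precisely because the large exponential factor $e^{2s\alpha}\leq 1$ multiplies $p$, not the other way around; with your formula the chain would not close. (The same inversion appears in your claim $e^{-s\alpha}\phi^{3/2}u\in L^2(Q_\omega)$, which should read $e^{-s\alpha}\phi^{-3/2}u\in L^2(Q_\omega)$.) Second, the final $L^\infty$ bound on $u$ is not obtained by ``Meyers--type regularity'' --- that only gives a small integrability gain above $2$ for the gradient --- but by applying the $W^{2,1}_q(Q)$ parabolic Schauder--type estimate (Ladyzhenskaya--Solonnikov--Uraltseva) to $P=e^{(s+\delta)\alpha_0}\phi_0^{3}p$ and then using the embedding $W^{2,1}_q(Q)\hookrightarrow L^\infty(Q)$, which is where the requirement $q>n$ and the H\"older continuity of $b=a'(z)$ (guaranteed by membership in $B$) enter. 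Third, you assume without comment that the infimum of $Q_z$ is finite and attained; this requires an argument (the paper constructs a finite--energy admissible pair via a penalized problem $Q_z^\epsilon$ and passage to the limit) that should not be skipped, since the weight $e^{-2s\alpha}$ is unbounded and the natural uncontrolled trajectory does not have finite cost.
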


%Recall the definition of $q_i$ and $q$, (\ref{def_q}).

This is essentially the result we intend to prove, but the conditions imposed on initial data and on the nonlinear function $a$ are stronger than necessary. Several improvements are possible, as we shall see henceforth.

In the course of proving Proposition \ref{w2i} we use the following local regularity result:
\begin{proposition}
For any sufficiently small $\zeta$ there exists $\rho>0$ such that if
\be
\|z-y_s\|_{W^{1, \infty}(0, T; L^q(\Omega)) \cap L^{\infty}(0, T; W^{1, \infty}(\Omega))} \leq \zeta
\ee
and $\|\Delta a(y_0) + f\|_q \leq \rho$, $\|u\|_{W^{1, \infty}(0, T; L^q(\Omega))} \leq \rho$, $\|z-y_s\|_{L^{\infty}(0, T; L^2(\Omega))} \leq \rho$, and if $T \leq T_0$ with $T_0$ independent of $\zeta$, then the solution $y$ of the linear equation (\ref{y}) satisfies 
\be
\|y-y_s\|_{W^{1, \infty}([0, T]; L^q(\Omega)) \cap L^{\infty}(0, T; W^{1, \infty}(\Omega))} \leq \zeta.
\ee
\label{propnoua}
\end{proposition}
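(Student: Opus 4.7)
The plan is to transform to the equation for $w = y - y_s$, perform an $L^q$ energy estimate on the time-differentiated equation, and recover spatial $W^{2,q}$ regularity pointwise in $t$ via elliptic theory. Smallness in the target norm comes from the $W^{2,q}$ control on $y_0 - y_s$ (by $\rho$) and from a factor $T_1^{1/q}$ in front of the troublesome forcing, which lets a short $T_0$ (independent of $\zeta$) absorb perturbative terms in a bootstrap argument.

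\emph{Step 1.} Set $w = y - y_s$. Subtracting $-\div(a'(y_s)\dl y_s) = f$ from (\ref{y}),
\begin{equation*}
w_t - \div(a'(z)\dl w) = mu + \div((a'(z) - a'(y_s))\dl y_s), \quad w(0) = y_0 - y_s, \quad w|_\Sigma = 0.
\end{equation*}
Elliptic $L^q$ regularity for $a(y_0) - a(y_s) \in W^{1,q}_0 \cap W^{2,q}$ (using $a(0)=0$ and $y_0|_\Sigma = 0 = y_s|_\Sigma$), together with the chain rule and $a',1/a' \in L^\infty$, converts $\|\Delta a(y_0) + f\|_q \le \rho$ into $\|y_0 - y_s\|_{W^{2,q}} \le C(a,y_s)\rho$. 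Differentiating in time and setting $v = w_t = y_t$,
\begin{equation*}
v_t - \div(a'(z)\dl v) = m u_t + \div(a''(z)z_t \dl y), \quad v(0) = \Delta a(y_0) + f + mu(0),
\end{equation*}
using $z(0) = y_0$ (as holds in the intended application, $z \in B$), so $\|v(0)\|_q \le 2\rho$.

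\emph{Step 2 (bootstrap).} Assume $\|\dl w\|_{L^\infty(\Omega\times(0,T_1))} \le \zeta$ for some $T_1 \le T_0$, so that $\|\dl y\|_\infty \le C(y_s)$ once $\zeta \le 1$. Testing the $v$-equation against $|v|^{q-2}v$, integrating by parts, and using coercivity $a'(z) \ge \mu$ together with Young's inequality to absorb the cross term $\int a''(z)z_t \dl y \cdot \dl v \cdot |v|^{q-2}\,dx$ against the coercive part, one arrives at
\begin{equation*}
\frac{d}{dt}\|v\|_q^q \le C\|v\|_q^q + C(\|u_t\|_q^q + \|z_t\|_q^q),
\end{equation*}
with $C = C(a,y_s,\mu)$ independent of $\zeta$ (for $\zeta \le 1$). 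Gronwall yields $\|w_t\|_{L^\infty(0,T_1;L^q)} \le C(\rho + T_1^{1/q}(\rho + \zeta))$. At each fixed $t$, view $-\div(a'(z)\dl w) = G(t) - w_t(t)$ (with $G$ the full forcing, $\|G(t)\|_q \le \rho + C\zeta$) as an elliptic equation with Lipschitz coefficients ($\|\dl a'(z)\|_\infty \le C(y_s,\zeta)$) and Dirichlet data; elliptic $L^q$ regularity plus Sobolev embedding $W^{2,q} \hookrightarrow W^{1,\infty}$ (valid since $q > n$) then gives $\|w(t)\|_{W^{1,\infty}} \le C(\rho + \zeta + \|w_t(t)\|_q)$.

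\emph{Step 3 (closing).} Combining,
\begin{equation*}
\|w\|_{W^{1,\infty}(0,T_1;L^q)\cap L^\infty(0,T_1;W^{1,\infty})} \le C(a,y_s)\bigl(\rho + T_1^{1/q}(\rho + \zeta)\bigr).
\end{equation*}
Choose $T_0$ so that $C T_0^{1/q} \le 1/4$ (this is independent of $\zeta$), and then $\rho$ small (depending on $\zeta$) so that the whole right side is at most $\zeta/2$. The bootstrap hypothesis then improves from $\zeta$ to $\zeta/2$ on $[0,T_1]$, and by continuity the estimate extends to all of $[0,T_0]$. The main obstacle throughout is the forcing $\div(a''(z) z_t \dl y)$ in the $v$-equation, since it couples to the very quantity $\dl y$ that we want to control; the bootstrap, enabled by smallness of $\zeta$ and the short-time factor $T^{1/q}$, is what breaks this coupling, and keeping the implicit constants independent of $\zeta$ (using $\zeta \le 1$) is what guarantees $T_0$ does not depend on $\zeta$.
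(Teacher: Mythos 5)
Your outline follows the paper's route in spirit — differentiate in time, get an $L^q$ energy estimate for $y_t$, then convert control of $y_t$ into $W^{1,\infty}$ control of $y-y_s$ via elliptic regularity, closing with a short-time bootstrap — but there is a genuine gap in how you treat the elliptic step, and it breaks the closing of the bootstrap.

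In Step 2 you acknowledge $\|G(t)\|_q \le \rho + C\zeta$, where the $C\zeta$ comes from the piece $\nabla\big(a'(z)-a'(y_s)\big)\cdot\nabla y_s$ of the expanded divergence, which is of size $\|\nabla(z-y_s)\|_\infty \le \zeta$ with no small factor. Your elliptic estimate then correctly gives $\|w(t)\|_{W^{1,\infty}} \le C(\rho + \zeta + \|w_t(t)\|_q)$. But your Step 3 silently drops this $C\zeta$: the true combined bound is $\|w\|_{L^\infty(0,T_1;W^{1,\infty})} \le C\big(\rho + \zeta + T_1^{1/q}(\rho+\zeta)\big)$, and the leading $C\zeta$ carries no $T_1^{1/q}$ factor and no power of $\rho$. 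Since $C$ (the elliptic regularity constant) is not at your disposal and is generically $>1$, choosing $T_0$ and $\rho$ small does not yield $\le \zeta/2$, and the bootstrap never improves. This is precisely the coupling you flagged as "the main obstacle," but the mechanism you propose (smallness of $\zeta$, short-time factor) does not actually break it for this term.

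The paper handles exactly this difficulty by not expanding the divergence. It keeps the forcing in divergence form $\div\big((a'(z)-a'(y_s))\nabla y_s\big)$ and uses divergence-form elliptic estimates (Gilbarg--Trudinger Thms.\ 8.16 and 8.33), so what enters is $\|(a'(z)-a'(y_s))\nabla y_s\|_q$ and its $C^{0,\beta}$ norm, hence only a H\"older norm of $z-y_s$ — no $\nabla(z-y_s)$ appears. It then invokes the interpolation inequality
\begin{equation*}
\|z-y_s\|_{C^{0,\beta}(\Omega)} \le C\,\|z-y_s\|_{W^{1,\infty}(\Omega)}^{\beta_0}\,\|z-y_s\|_{2}^{1-\beta_0},\qquad \beta_0=\tfrac{n+2\beta}{n+2},
\end{equation*}
together with the hypothesis $\|z-y_s\|_{L^\infty(0,T;L^2(\Omega))}\le\rho$, which you never used, to reduce the forcing contribution to $O(\rho^{1-\beta_0})$ rather than $O(\zeta)$. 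This is the missing idea: the $L^2$-smallness of $z-y_s$ is essential, and the route to exploit it is to stay in divergence form and interpolate, not to expand $\div$ and pay for $\nabla(z-y_s)$ at full size $\zeta$. (Two smaller issues: your $W^{2,q}$ route also implicitly needs $(a''(z)-a''(y_s))\nabla y_s\cdot\nabla y_s \in L^q$ with a quantitative bound, which requires more than $a\in C^2$; and the constant you invoke in the $W^{2,q}$ estimate must be checked to be uniform in $\zeta\le 1$ — you do note this, but the divergence-form theorems sidestep it more cleanly.)

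Your handling of $v(0)=\Delta a(y_0)+f+mu(0)$ with $\|mu(0)\|_q\le\rho$ from the hypothesis on $u$ is actually slightly cleaner and more self-contained than the paper's appeal to $u(0)=0$ from the eventual application, and Steps 1 and 2 up to the Gronwall bound are sound. The argument would be repaired by replacing your Step 2 elliptic input with the paper's Theorems 8.16/8.33 plus the interpolation using $\|z-y_s\|_{L^\infty(0,T;L^2)}\le\rho$, so that the $\zeta$ in the $W^{1,\infty}$ estimate becomes $\rho^{1-\beta_0}$; then your Step 3 closes as written.
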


Taking $u=0$, one retrieves a similar statement concerning the solution $Y$ of (\ref{omogen}), which is also used in the proof:
\begin{corollary}
For any sufficiently small $\zeta$ there exists $\rho>0$ such that if $\|\Delta a(y_0) + f\|_q \leq \rho$, then the solution $Y$ of the uncontrolled equation (\ref{omogen}) satisfies
\be
\|Y-y_s\|_{W^{1, \infty}(0, T; L^q(\Omega)) \cap L^{\infty}(0, T; W^{1, \infty}(\Omega))} \leq \zeta.
\ee
\label{cornou}
\end{corollary}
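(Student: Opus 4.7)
The plan is to apply Proposition \ref{propnoua} with $u \equiv 0$ and auxiliary function $z = Y$, since under this substitution the linear equation (\ref{y}) collapses to the nonlinear equation (\ref{omogen}), so that $y(z;u=0) = Y$ whenever $z = Y$. The hypothesis $\|z-y_s\|_{W^{1,\infty}(0,T;L^q) \cap L^\infty(0,T;W^{1,\infty})} \leq \zeta$ of Proposition \ref{propnoua} then coincides with the very conclusion we wish to establish for $Y$; breaking this circularity is the entire content of the proof.

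On a short time window $[0, T_0]$, where $T_0$ is the time produced by Proposition \ref{propnoua} (and crucially independent of $\zeta$), I would realize $Y$ as the fixed point of the map $\Phi \colon z \mapsto y(z; 0)$ on the closed convex set
\[
\mathcal{K} = \bigl\{\, z \in C(\ov{Q_{T_0}}) : z(0)=y_0,\ z\mid_{\Sigma}=0,\ \|z-y_s\|_{W^{1,\infty}(0,T_0;L^q) \cap L^{\infty}(0,T_0;W^{1,\infty})} \leq \zeta \,\bigr\}.
\]
Proposition \ref{propnoua} applied with $u=0$ and initial-data smallness $\|\Delta a(y_0)+f\|_q \leq \rho$ gives the invariance $\Phi(\mathcal{K}) \subset \mathcal{K}$. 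The parabolic smoothing built into the target norm, together with a standard difference estimate on $\Phi(z_1)-\Phi(z_2)$ in a weaker norm such as $L^\infty(0,T_0;L^2(\Omega))$, yields either compactness of $\Phi(\mathcal{K})$ in $C(\ov{Q_{T_0}})$ (Schauder) or contractivity after possibly shrinking $T_0$ (Banach), producing a fixed point. Uniqueness for (\ref{omogen}) then identifies this fixed point with $Y$, which therefore satisfies the desired bound on $[0, T_0]$.

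To extend from $[0, T_0]$ to the full interval $[0, T]$, I would iterate the short-time argument with $Y(T_0)$ as new initial datum. Since $\|Y-y_s\|_{W^{1,\infty}([0,T_0];L^q)} \leq \zeta$ controls $\|Y_t(T_0)\|_q$, and $\Delta a(Y(T_0))+f = Y_t(T_0)$ from the equation, one obtains smallness of $\|\Delta a(Y(T_0))+f\|_q$ to restart on $[T_0, 2T_0]$; since $T_0$ is fixed, finitely many such steps cover $[0,T]$.

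The main obstacle is precisely the last step: Proposition \ref{propnoua} as stated supplies only $\|Y_t\|_{L^\infty(L^q)} \leq \zeta$, whereas restarting requires $\|\Delta a(Y(kT_0))+f\|_q \leq \rho$, and a priori $\rho \ll \zeta$. This forces a quantitative refinement showing that in fact $\|Y_t\|_{L^\infty(0,T_0;L^q)}$ is bounded by a constant multiple of the initial-data quantity $\|\Delta a(y_0)+f\|_q$ rather than merely by $\zeta$. Such a bound is natural because, under the standing hypothesis $a'(y_s) > 0$, the linearization of (\ref{omogen}) at $y_s$ is a coercive parabolic operator, so $\|\Delta a(Y)+f\|_q$ should be controlled by its initial value up to multiplicative constants depending only on $a$ and $y_s$. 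Once this refinement is in place, the smallness hypothesis is preserved from step to step and the iteration closes.
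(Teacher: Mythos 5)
Your core argument is exactly the paper's: set $u\equiv 0$, view $Y$ as the fixed point of $z\mapsto y(z;0)$ on a set like $\mathcal{K}$, invoke Proposition \ref{propnoua} for invariance, and apply Schauder using compactness of the target norm in $C(\ov Q)$ (or $L^2(Q)$) together with the closed-graph property. The ``main obstacle'' you then raise --- iterating from $[0,T_0]$ to a general $[0,T]$ and needing $\|\Delta a(Y(kT_0))+f\|_q\le\rho$ rather than merely $\le\zeta$ --- is not something the paper addresses, because it does not arise: Corollary \ref{cornou} inherits the restriction $T\le T_0$ from Proposition \ref{propnoua}, and this suffices, since in the only place it is used (the proof of Proposition \ref{w2i}) the author has already fixed $T\le T_0$ at the outset. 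So the extra refinement you propose, while a reasonable thought, is unnecessary for the statement as actually used; your short-time fixed-point argument already reproduces the paper's proof.
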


The proof of Proposition \ref{w2i} goes as follows: Assuming that $z \in B$, we first show that the functional $Q_z(y, u)$ has a minimizing pair $(y, u)$ in $L^2_{loc}(Q)$, where $u = m p e^{2s\alpha} s^3 \lambda^3 \phi^3$ and $y$, $p$ satisfy the equation
\begin{equation}
\begin{array}{l}
p_t + \div(b \dl p) = e^{-2s\alpha} ((y-Y) \chi_{[0, T/2]} + (y - y_s) \chi_{[T/2, T]}) \mbox{ on } Q\\
p \mid_\Sigma = 0,\ p(T) = 0.
\end{array}
\label{dual}
\end{equation}
Then, we prove that both $y=F(z)$ and $u$ have further regularity properties, as needed. Define
\begin{equation}
\begin{split}
B_i(\zeta_i) = \{y \in L^2(Q) \mid & \|e^{-s\alpha} \phi_0^{-i} \chi_{[0, T/2]} (y-Y)\|_{L^{q_i}(0, T; L^{q_{i+1}}(\Omega))} + \\
& + \|e^{-s\alpha} \phi_0^{-i} \chi_{[T/2, T]} (y-y_s)\|_{L^{q_i}(0, T; L^{q_{i+1}}(\Omega)} \leq \zeta_i\}.
\end{split}
\end{equation}
We show that $y \in B_0(\zeta_0)$ for some sufficiently small $\zeta_0$ and for a specific choice of $s$ and $\lambda$. Then, we recursively choose $\zeta_i$ as a function of $\zeta_{i-1}$ and prove by induction that $y \in B_i(\zeta_i)$ and $\|e^{-s\alpha} \phi_0^{-i+1} p\|_{L^q(0, T; L^{q+1}(\Omega))} \leq C \zeta_i$.

Following a sufficiently large number of steps, depending on the dimension $n$, we obtain by Sobolev embedding that $\|u\|_{L^{\infty}(Q) \cap W^{1, \infty}(0, T; L^q(\Omega))} \leq C (\|y_0-y_s\|_q + \zeta_0)$. The final step is to choose $\zeta$ and prove that $\|y\|_{W^{1, \infty}(0, T; L^q(\Omega)) \cap L^{\infty}(0, T; W^{1, \infty}(\Omega))} \leq \zeta$. Therefore, $y \in B$.

Thus $B$ is stable under the map $F$. After showing that $F$ has a closed graph, one can apply Schauder's theorem and conclude the proof of Proposition \ref{w2i}.

The goal of proving controllability is achieved in Proposition \ref{w2i}, but only under the condition that $a'$, $a''$, and $1/a'$ should be bounded on $\set R$ and for a less than optimal class of initial data. Indeed, the condition that $\|\Delta a(y_0) + f\|_q$ should be small implies that $a(y_0) \in W^{2, q}(\Omega)$ and therefore $y_0 \in W^{2, q}(\Omega)$.

We relax the conditions on the initial data by means of the following two smoothing estimates, concerning the uncontrolled solution $Y$ of (\ref{omogen}).
\begin{proposition}
For each $\eta>0$ there exists $\rho>0$ such that, if $\|y_0-y_s\|_{W^{1, n}_0(\Omega)} \leq \rho$ and $T \leq \rho$, then the solution $Y$ of equation (\ref{omogen}) satisfies $\|Y-y_s\|_{C([0, T]; W^{1, n}_0(\Omega))} \leq \eta$.
\label{prop1}
\end{proposition}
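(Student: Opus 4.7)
I would work with the variable $v := a(Y) - a(y_s)$ rather than $w := Y - y_s$ directly. Using $-\Delta a(y_s) = f$, a direct computation shows
$$v_t = a'(Y)\,\Delta v, \qquad v(0) = a(y_0)-a(y_s), \qquad v|_\Sigma = 0,$$
an equation in which the nonlinearity appears only through the positive bounded coefficient $a'(Y)$. Since $a'>0$, the norms $\|v\|_{W^{1,n}}$ and $\|w\|_{W^{1,n}}$ are comparable in a neighborhood of zero, with constants depending on $\|y_s\|_{L^\infty}$ and on $\|a'\|_{C^1}$ on the relevant range. I would then split
$$v_t - a'(y_s)\Delta v = \bigl(a'(Y)-a'(y_s)\bigr)\Delta v$$
and treat the right side as a perturbation. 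Because $a(y_s)\in W^{2,q}(\Omega)$ with $q>n$, the frozen coefficient $a'(y_s)$ is H\"older continuous, so the operator $L:=a'(y_s)\Delta$ with Dirichlet data generates a strongly continuous analytic semigroup on $W^{1,n}_0(\Omega)$ enjoying parabolic $L^p$-maximal regularity for a suitable pair $(p,q')$ with $q'>n$ and $W^{2,q'}\hookrightarrow W^{1,n}$.

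A contraction argument in the ball
$$K_\eta := \{w\in C([0,T];W^{1,n}_0) : w(0)=y_0-y_s,\ \|w\|_{C_tW^{1,n}}\leq \eta\}$$
then closes the proof. Given $\tilde w\in K_\eta$, I solve the linear problem obtained by freezing the coefficient at $a'(y_s+\tilde w)$ and set $w$ to be the resulting $v$ transformed back via $a^{-1}$. Maximal regularity for $L$ combined with strong continuity of $e^{tL}$ at $t=0$ yields
$$\|w\|_{C_tW^{1,n}} \leq C\,\|y_0-y_s\|_{W^{1,n}_0} + C\,T^\theta\, \|a'(y_s+\tilde w)-a'(y_s)\|_{L^\infty(Q)}\, \|\Delta v\|_{L^p(0,T;L^{q'})}$$
for some $\theta>0$. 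The Sobolev embedding $W^{1,n}_0\hookrightarrow L^r$ for every finite $r$, combined with $y_s\in L^\infty$ and $a'\in C^1$, gives $\|a'(y_s+\tilde w)-a'(y_s)\|_{L^\infty(Q)}\leq C\eta$. Choosing $\eta \sim \rho$ with a fixed multiple, and then $\rho$ small enough that $CT^\theta\eta \leq 1/2$, closes the contraction; the fixed point coincides with $Y-y_s$ by uniqueness for (\ref{omogen}).

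The hardest step is obtaining the linear estimate at the \emph{borderline} Sobolev scale $W^{1,n}_0$ with an explicit gain $T^\theta\to 0$ as $T\to 0$. Parabolic maximal regularity is cleanest in $L^p$-scales, so I would actually bound the linearized $v$ in $L^p(0,T;W^{2,q'})\cap W^{1,p}(0,T;L^{q'})$ with $q'>n$ and $p$ large, then invoke the continuous embedding of this intersection into $C([0,T];W^{1,n}_0)$ together with strong continuity of the semigroup at $t=0$ to pass to the target norm. A secondary subtlety is that $a'(y_s+\tilde w)-a'(y_s)$ is only uniformly continuous rather than H\"older in $(x,t)$; this is harmless because the perturbation is absorbed as an $L^\infty(Q)$ factor on the right-hand side rather than inside the principal symbol, provided $\eta$ is taken small enough to keep $a'(y_s+\tilde w)$ within a fixed interval where $a'$ is bounded and bounded away from zero.
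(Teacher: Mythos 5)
Your plan is cleanly set up and the change of variables $v = a(Y) - a(y_s)$ with $v_t = a'(Y)\Delta v$ is correct, but the contraction step rests on a claim that fails at the critical Sobolev exponent. You write that $W^{1,n}_0(\Omega)\hookrightarrow L^r(\Omega)$ for every finite $r$, together with $a'\in C^1$, gives $\|a'(y_s+\tilde w)-a'(y_s)\|_{L^\infty(Q)} \leq C\eta$. This is false: $W^{1,n}(\Omega)$ does \emph{not} embed into $L^\infty(\Omega)$ -- only into $L^r$ for all finite $r$, BMO, or exponential Orlicz spaces. A ball of radius $\eta$ in $C([0,T];W^{1,n}_0)$ contains functions that are arbitrarily large in $L^\infty$, so the perturbation coefficient $a'(y_s+\tilde w)-a'(y_s)$ cannot be made uniformly small, and the factor multiplying $\|\Delta v\|_{L^p(0,T;L^{q'})}$ in your estimate is not $O(\eta)$. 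If instead you pay with an $L^r$-H\"older bound on that factor, the right-hand side lands in $L^{q''}$ for some $q''<q'$, and iterating a maximal-regularity estimate then loses integrability at each step; closing that loop would require a genuinely new idea (e.g.\ Lorentz-space refinements or BMO multiplier bounds) which you have not supplied. This is precisely the difficulty that makes the statement nontrivial: $W^{1,n}_0$ is the scaling-critical space for this equation, and the pointwise control your perturbative scheme needs is unavailable.

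The paper circumvents this by an energy method at the level of the spatial derivatives. It differentiates (\ref{omogen}), proves the differential inequality of Lemma \ref{semi1} for $\sum_i\|D_i(Y(t)-y_s)\|_n^n$, and then absorbs the dangerous term $\int_\Omega|D_i w|^{n+2}$ using the Gagliardo--Nirenberg interpolation
$\int_\Omega |D_i w|^{n+2}\,dx \leq C\Big(\int_\Omega|D_iw|^{n-2}|\nabla D_iw|^2\,dx\Big)\Big(\int_\Omega|D_iw|^n\,dx\Big)^{2/n}$,
which is exactly scale-invariant; the resulting quadratic term is controlled by a continuation argument once $\|D_iw\|_n$ is small. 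No $L^\infty$ bound on $Y-y_s$ is ever invoked. A second substantive issue your sketch does not touch is that the differentiated equation carries no Dirichlet data on $\Sigma$; the paper has to handle the boundary integrals for $D_iw$ via local coordinates and a partition of unity (the Ladyzhenskaya-type computations (\ref{c0})--(\ref{cult})). That is where much of the real work in the paper's proof lies, and a semigroup/maximal-regularity formulation must confront the same compatibility issue in some form.
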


\begin{proposition}
Consider $q>n$. If $\|y_0-y_s\|_{W^{1, n}_0(\Omega)}$ is sufficiently small, then the solution $Y$ of equation (\ref{omogen}) satisfies
\begin{equation}
\|tY_t\|_{L^{\infty}(0, T; L^q(\Omega))} \leq C_T \|y_0 - y_s\|_2^{\frac 2 q}
\end{equation}
and in particular
\begin{equation}
\|\Delta a(Y(T)) + f\|_q \leq C_T \|y_0 - y_s\|_2^{\frac 2 q}
\end{equation}
for any sufficiently small $T>0$.
\label{prop2}
\end{proposition}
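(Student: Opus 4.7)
The plan is to let $w=Y-y_s$ and $B(s,x)=a(y_s(x)+s)-a(y_s(x))$, so that $B(0,x)=0$, $B'(0,x)=a'(y_s(x))\geq c_0>0$, and subtracting (\ref{stationar}) from (\ref{omogen}) yields $w_t=\Delta B(w)$. For $\|w\|_\infty$ small the coefficient $B'(w)$ is bounded above and below; Proposition~\ref{prop1} lets me assume $\|\nabla w\|_{L^\infty(0,T;L^n(\Omega))}$ is as small as needed. I will couple an $L^2$ energy estimate, which scales linearly in $\|y_0-y_s\|_2$, with an $L^\infty$-in-space bound on $t\,Y_t$ that is uniform in the data, and then use Lebesgue interpolation to produce the fractional exponent $2/q$.

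For the $L^2$ step, testing $w_t=\Delta B(w)$ against $B(w)$ and using the primitive $G(s,x)=\int_0^s B(\sigma,x)\,d\sigma\geq (c_0/2)\,s^2$, integration by parts gives
\[
\frac{d}{dt}\int_\Omega G(w,x)\,dx+\|\nabla B(w)\|_2^2=0,
\]
hence $\|w\|_{L^\infty(0,T;L^2)}+\|\nabla B(w)\|_{L^2(Q)}\leq C\|y_0-y_s\|_2$. Next, $v=Y_t=w_t$ satisfies $v_t=\Delta(B'(w)v)$; testing against $t\,v$ yields
\[
\frac{d}{dt}(t\|v\|_2^2)+2t\int_\Omega B'(w)|\nabla v|^2\,dx=\|v\|_2^2-2t\int_\Omega B''(w)\,v\,\nabla w\cdot\nabla v\,dx,
\]
whose cubic remainder is bounded by $C\|\nabla w\|_n\|\nabla v\|_2^2$ through the Sobolev embedding $H^1_0\hookrightarrow L^{2n/(n-2)}$ and is absorbed by Proposition~\ref{prop1}. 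A parallel time-weighted identity applied to the $L^2$ step produces $\int_0^T s\,\|v(s)\|_2^2\,ds\leq C\|y_0-y_s\|_2^2$, and one concludes $\|t\,v\|_{L^\infty(0,T;L^2(\Omega))}\leq C_T\|y_0-y_s\|_2$.

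To obtain an $L^\infty$-in-space bound on $t\,v$ independent of $\|y_0-y_s\|_2$, I use the identity $\Delta V=v$ where $V=a(Y)-a(y_s)$ vanishes on $\Sigma$: Calder\'on--Zygmund gives $\|V(t)\|_{W^{2,p}(\Omega)}\leq C\|v(t)\|_p$, and iterating the test-function calculation with $|v|^{p-2}v$ in place of $v$ propagates the estimate along a chain of Sobolev exponents up to $L^q$, each step absorbing a higher-order remainder through the smallness of $\|\nabla w\|_n$. Since $q>n/2$, the Sobolev embedding $W^{2,q}(\Omega)\hookrightarrow L^\infty(\Omega)$ delivers $\|t\,v\|_{L^\infty(Q)}\leq C_T$ with $C_T$ independent of $\|y_0-y_s\|_2$. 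The Lebesgue interpolation $\|u\|_q\leq \|u\|_2^{2/q}\|u\|_\infty^{1-2/q}$ then yields
\[
\|t\,v\|_{L^\infty(0,T;L^q(\Omega))}\leq C_T\|y_0-y_s\|_2^{2/q},
\]
and the second estimate follows by setting $t=T$ and invoking $Y_t(T)=\Delta a(Y(T))+f$ together with $a'(Y(T))$ being bounded above and below.

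The main obstacle is the bootstrap described in the previous paragraph: each iterated test-function calculation generates a new remainder containing $\nabla w$ and, past a certain stage, $\Delta w$, and these must be controlled uniformly across the entire chain running from $L^2$ up to $L^q$ with $q>n$ without spoiling the sharp $2/q$ exponent in the final interpolation. The smallness provided by Proposition~\ref{prop1} is calibrated precisely to close this loop.
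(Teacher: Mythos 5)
Your overall architecture differs genuinely from the paper's, and the crux of your plan contains a real gap.

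You aim to bound $t\,Y_t$ in $L^2(\Omega)$ (linearly in $\|y_0-y_s\|_2$) and in $L^\infty(Q)$ (uniformly), then apply $\|u\|_q\leq\|u\|_2^{2/q}\|u\|_\infty^{1-2/q}$. The paper never takes an $L^\infty$ bound on $tY_t$ at all. It runs a single $L^q$-energy estimate (multiplying the $tY_t$-equation by $|tY_t|^{q-2}tY_t$) and obtains $\|tY_t\|_{C([0,T];L^q(\Omega))}\leq C\|Y-y_s\|_{L^q(0,T;W^{1,nq/(n+q-2)}(\Omega))}$, then interpolates \emph{that Sobolev norm of $Y-y_s$} between $\|Y-y_s\|_{L^2(0,T;W^{1,2}(\Omega))}$ (which scales linearly in $\|y_0-y_s\|_2$) and $\|Y-y_s\|_{L^\infty(0,T;W^{1,n}(\Omega))}$ (which is uniformly small by Proposition \ref{prop1}). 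The whole $L^\infty$-in-space question is bypassed, and that is precisely where your argument breaks.

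The gap: your step ``Since $q>n/2$, the Sobolev embedding $W^{2,q}(\Omega)\hookrightarrow L^\infty(\Omega)$ delivers $\|t\,v\|_{L^\infty(Q)}\leq C_T$'' is a non sequitur. The Calder\'on--Zygmund bound $\|V(t)\|_{W^{2,q}(\Omega)}\leq C\|v(t)\|_q$ controls the elliptic potential $V=a(Y)-a(y_s)$ in $L^\infty$, not $v=\Delta V$; bounding $v$ in $L^\infty$ this way would require $V\in W^{2,\infty}$, which is circular. To get $L^\infty$ directly from the equation $v_t=\Delta(B'(w)v)$ you would have to push the Moser iteration all the way to $p=\infty$ with carefully escalating time weights (each $L^{p}\to L^{pn/(n-2)}$ step must lose a fixed power of $t$, and the geometric product over infinitely many steps must converge uniformly in the data). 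None of this is in your sketch, and the threshold you cite ($q>n/2$) does not even match the paper's hypothesis $q>n$. There is also a smaller mismatch in the $L^2$ stage: testing the $v$-equation against $tv$ only yields $t\|v(t)\|_2^2\lesssim\int_0^t\|v\|_2^2\,ds$, whose right-hand side is controlled by $\|y_0-y_s\|_{W^{1,2}}$ rather than $\|y_0-y_s\|_2$; to close this you need the extra time weight (test against $t^2v$ and feed in your $\int_0^T s\|v\|_2^2\,ds\lesssim\|y_0-y_s\|_2^2$ bound). The $L^2$-side of your plan is fixable, but the $L^\infty$ side is a different and substantially harder argument than what you have written, which is exactly the difficulty the paper's $L^q$-plus-Sobolev-interpolation route is designed to avoid.
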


We achieve the desired improvement of Proposition \ref{w2i} in a very basic manner. Namely, let the control $u$ be $0$ on some initial time interval $[0, T_0]$, with $T_0<T$. On the interval $[0, T_0]$, the solution of equation (\ref{dif}) will coincide with the uncontrolled solution $Y$, turning $W^{1, n}_0(\Omega)$ initial data at time $0$ into one of suitable regularity at time $T_0$. Then, one can apply Proposition \ref{w2i} on the interval $[T_0, T]$ and achieve controllability. In this manner, the condition imposed on the initial data can be relaxed to $\|y_0-y_s\|_{W^{1, n}_0(\Omega)}$ being sufficiently small.

Finally, assume that $\|y_0-y_s\|_{C(\ov \Omega)}$ is also small. Then $\|y-y_s\|_{C(\ov Q)}$ can be decreased at will, for $y$ being the controlled solution of equation (\ref{dif}). Therefore the bounds on the nonlinearity $a$ in (\ref{dif}) only matter on a neighborhood of $y_s(\Omega)$. The global boundedness conditions on $a'$ and $a''$ can be eliminated. This leads to the main result, Theorem \ref{2nd}.

%
%This reasonment is summarized in Proposition \ref{cor2}:
%\begin{proposition}
%Assume that $a'$, $a''$, and $1/a'$ are bounded on $\set R$. Equation (\ref{dif}) is controllable to the stationary solution $y_s$ with $a(y_s) \in W^{2, q}(Q)$, $q>n$ if $\|y_0 - y_s\|_{W^{1, n}_0(\Omega)}$ is sufficiently small. For any $\lambda \geq \lambda_0(y_s, a)$, $s \geq s_1(\lambda)$, and $\delta > 0$, the control $u$ can be chosen such that
%\begin{equation}
%\begin{array}{c}
%\displaystyle{
%\|e^{-s(1 - 2\eta(\lambda) - \delta) \alpha_0}u\|_{L^{\infty}(Q)} + \|e^{-s(1 - 2\eta(\lambda) - \delta) \alpha_0}\phi_0^{-1}u_t\|_{L^q(Q)} \leq }\\
%\displaystyle{
%\leq C_{s, \lambda, \delta} C_{y_s, a} \|y_0 - y_s\|_2^{2/q}, }\\
%\|e^{-s\alpha} \phi_0^{1-i} u\|_{C([0, T]; L^{q_i}(\Omega)) \cap L^{q_i}(0, T; L^{q_{i+1}}(\Omega))} \leq C_{y_s, a} \|y_0 - y_s\|_2^{2/{q_i}}.
%\end{array}
%%\tag{\ref{control}}
%\end{equation}
%The controlled solution $y$ has the property that
%\begin{equation}
%\begin{array}{c}
%\|t(y-y_s)\|_{W^{1, \infty}(0, T; L^q(\Omega)) \cap L^{\infty}(0, T; W^{1, \infty}(\Omega))} \leq C(a, y_s), \\
%\|e^{-s\alpha}\phi_0^{-i}(y-Y)\|_{C([0, T/2]; L^{q_i}(\Omega)) \cap L^{q_i}(0, T/2; L^{q_{i+1}}(\Omega))} + \\
%+ \|e^{-s\alpha}\phi_0^{-i}(y-y_s)\|_{C([T/2, T]; L^{q_i}(\Omega)) \cap L^{q_i}(T/2, T; L^{q_{i+1}}(\Omega))} \leq C_{y_s, a} \|y_0 - y_s\|_2^{2/{q_i}}.
%\end{array}
%%\tag{\ref{control2}}
%\end{equation}
%\label{cor2}
%\end{proposition}

In the sequel we give detailed proofs of the statements made in this outline, beginning with the auxiliary results and ending with the main one.

\section{Proof of Auxiliary Results}
\subsection{Carleman's Inequality}

%Carleman's inequality (Proposition \ref{car}) states that whenever $0 < \mu \leq b \leq M < \infty$ and $\|b_t\|_{L^{\infty}(0, T; L^n(\Omega))} + \|\dl b\|_{L^{\infty}(Q)} \leq \zeta < \infty$, and for every $\lambda \geq \lambda_0(b) = C (1 + \zeta^2)$, $s \geq s_0(\lambda)$, the solution $p$ of the dual equation (\ref{p}) satisfies
%\begin{multline}
%\int_Q {e^{2s\alpha}(s^3 \lambda^3 \phi^3 p^2 + s \lambda \phi (\dl p)^2) dx dt} \leq \\
%\leq C \Big((1 + \zeta) \int_{Q_{\omega}}{e^{2s\alpha} s^3 \lambda^3 \phi^3 p^2 dx dt} + \|e^{s\alpha} g\|_{L^2(Q)}^2\Big),
%\tag{\ref{Carleman}}
%\end{multline}
%where $C$ is a constant independent of $p$, $s$, $\lambda$, and $\zeta$, but which may depend on $T$, $\Omega$, and $\omega$.

\begin{proof}[Proof of Proposition \ref{car}] The subsequent proof essentially follows that given in~\cite{fursikov}.

Recall the auxiliary functions $\alpha$ and $\phi$ given by (\ref{def_ap}) and note that they satisfy
\begin{equation}
\alpha_t \leq C \gamma(\lambda) \phi^2, \alpha_{tt} \leq C \gamma(\lambda) \phi^3,
\end{equation}
where $\gamma(\lambda) = e^{2\lambda \|\psi\|_{C(\overline{\Omega})}}$.

By taking $z = e^{s \alpha} p$ in the dual system (\ref{p}) we obtain the equation
\begin{equation}
\begin{aligned}
&z_t - s \alpha_t z + div(b \dl z) - (2s \lambda \phi b \dl \psi) \dl z + &\\
&+ (s^2 \lambda^2 \phi^2 -s \lambda^2 \phi) b (\dl \psi)^2 z - &\\
&- s \lambda \phi b \Delta \psi z - s \lambda \phi (\dl \psi \cdot \dl b) z = e^{s\alpha} g&\\
&z(0) = z(T) = 0\ \text{on}\ \Omega&\\
&z\mid_{\Sigma} = 0.&
\end{aligned}
\label{z}
\end{equation}

Everywhere in the sequel we assume that $s \geq \gamma(\lambda) = e^{2\lambda \|\psi\|_{C(\overline{\Omega})}}$ and $\lambda \geq 1$. Define
\begin{equation}
D(s, \lambda, z) = \int_Q {s^3 \lambda^3 \phi^3 z^2 + s \lambda \phi (\dl z)^2 dx dt}.
\end{equation}
We see that
\begin{equation}
\|\dl (s^{1/2} \lambda^{1/2} \phi^{1/2} z)\|_{L^2(Q)}^2 \leq C D(s, \lambda, z).
\end{equation}
Hence we also obtain
\begin{equation}
\|s \lambda \phi z\|_{L^2(0, T; L^{\frac {2n}{n - 1}}(\Omega))}^2 \leq C D(s, \lambda, z).
\end{equation}

\begin{lemma}
\begin{multline}
\|s^{-1/2} \lambda^{-1/2} \phi^{-1/2} \dl z\|_{L^2(0, T; L^{\frac{2n}{n - 2}}(\Omega))}^2 \leq \\
\leq C  (1 + \|\dl b|_{L^{\infty}(0, T; L^{2n}(\Omega))}^2 + \|b_t\|_{L^{\infty} (0, T; L^n(\Omega))}^2) D(s, \lambda, z) + C \|e^{s \alpha} g\|_{L^2(Q)}^2.
\label{reg}
\end{multline}
\label{regularitate}
\end{lemma}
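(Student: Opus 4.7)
The plan is to combine a slice-wise Sobolev embedding with a weighted elliptic bound read off from equation (\ref{z}), handling the troublesome $z_t$ contribution by a separate multiplier argument. I would first apply the Sobolev inequality $W^{1,2}(\Omega) \hookrightarrow L^{2n/(n-2)}(\Omega)$ to each component of $\phi^{-1/2}\dl z$ at fixed $t$. Since $\dl(\phi^{-1/2}\dl z) = \phi^{-1/2}D^2 z - \tfrac{\lambda}{2}\phi^{-1/2}\dl\psi\otimes\dl z$, this reduces the LHS of (\ref{reg}) (after pulling out the $s^{-1}\lambda^{-1}$ prefactor) to
\[
\|\phi^{-1/2}\dl z\|_{L^2(0,T;L^{2n/(n-2)}(\Omega))}^2 \leq C(1+\lambda^2)\int_Q \phi^{-1}|\dl z|^2\,dx\,dt + C\int_Q \phi^{-1}|D^2 z|^2\,dx\,dt.
\]
The first integral is controlled by $C_T(s\lambda)^{-1}D(s,\lambda,z)$ because $\phi^{-1}$ is uniformly bounded on $Q$ while $s\lambda\phi|\dl z|^2$ appears inside $D(s,\lambda,z)$.

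To handle the $D^2 z$ integral, I would rearrange (\ref{z}) as $b\Delta z = \div(b\dl z) - \dl b\cdot\dl z$ and square. The spatial $H^2$-identity $\int|D^2 z|^2 \leq C\int|\Delta z|^2$ (valid under $z\mid_{\partial\Omega}=0$, with $\lambda$-order corrections from the $x$-dependence of the weight $\phi^{-1}$ that are absorbable) combined with $b\geq\mu$ then gives
\[
\int_Q \phi^{-1}|D^2 z|^2 \leq C\mu^{-2}\int_Q \phi^{-1}\bigl[(e^{s\alpha}g)^2 + z_t^2 + |\dl b|^2|\dl z|^2 + \mathcal{C}(s,\lambda,z)\bigr],
\]
where $\mathcal{C}$ collects the Carleman multiplier terms from (\ref{z}) of the form $(s^j\lambda^k\phi^\ell z)^2$ and $(s\lambda\phi\dl z)^2$. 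Each $\mathcal{C}$-term, once cancelled against $\phi^{-1}$, is bounded by $D(s,\lambda,z)$ after matching powers of $s$, $\lambda$, and $\phi$ using $\lambda\geq 1$ and $s\geq \gamma(\lambda)$. The $|\dl b|^2|\dl z|^2$ contribution produces the $\|\dl b\|_{L^\infty(0,T;L^{2n}(\Omega))}^2$ factor: by H\"older ($L^{2n}\cdot L^{2n/(n-1)}\cdot L^{2n/(n-1)}$), the interpolation $\|\phi^{-1/2}\dl z\|_{L^{2n/(n-1)}}^2\leq\|\phi^{-1/2}\dl z\|_{L^2}\|\phi^{-1/2}\dl z\|_{L^{2n/(n-2)}}$, and Young's inequality, it yields $C\|\dl b\|^2 D + \varepsilon\|\phi^{-1/2}\dl z\|_{L^2L^{2n/(n-2)}}^2$, with the $\varepsilon$-term absorbed into the LHS.

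The main obstacle is the $\int_Q\phi^{-1}z_t^2$ term, which is not contained in $D(s,\lambda,z)$. My plan is to multiply (\ref{z}) by $\phi^{-1}z_t$ and integrate over $Q$: the coercive $\int\phi^{-1}z_t^2$ arises naturally on the left, while the cross term $\int\phi^{-1}\div(b\dl z)z_t$, integrated by parts in $x$, yields $-\tfrac{1}{2}\partial_t(\phi^{-1}b|\dl z|^2)$ plus a $\phi^{-2}b\dl\phi\cdot\dl z\,z_t$ residual. The time-boundary contribution vanishes because $\phi^{-1}(\cdot,0) = \phi^{-1}(\cdot,T) = 0$; the residual is split by Cauchy--Schwarz, with $\tfrac{1}{4}\int\phi^{-1}z_t^2$ absorbed back on the left. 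The leftover $\tfrac{1}{2}\int\phi^{-1}b_t|\dl z|^2$ produces the $\|b_t\|_{L^\infty(0,T;L^n(\Omega))}^2$ factor by the same H\"older/interpolation/Young argument, and the remaining Carleman multipliers tested against $\phi^{-1}z_t$ are dispatched by routine Cauchy--Schwarz against $D$. Substituting the resulting bound on $\int\phi^{-1}z_t^2$ into the preceding step and absorbing small multiples of $\|\phi^{-1/2}\dl z\|_{L^2L^{2n/(n-2)}}^2$ into the LHS of (\ref{reg}) completes the proof.
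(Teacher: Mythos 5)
Your proposal is correct in substance but takes a genuinely different route from the paper's proof, and the difference is worth noting. The paper never isolates an estimate for $\int_Q \phi^{-1} z_t^2$ at all: it squares equation (\ref{z}), multiplies by $s^{-1}\lambda^{-1}\phi^{-1}$, and integrates; the cross term $2\int_Q s^{-1}\lambda^{-1}\phi^{-1} z_t\,\div(b\dl z)$ is integrated by parts (in $x$, then in $t$, using $z(0)=z(T)=0$), which produces exactly $-\int_Q s^{-1}\lambda^{-1}\phi^{-1} z_t^2$ together with the $\|b_t\|_{L^\infty(0,T;L^n)}$ term; this cancels the $z_t^2$ arising from expanding the square, so $\int_Q s^{-1}\lambda^{-1}\phi^{-1}(\div(b\dl z))^2$ is isolated without any separate energy identity. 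From there the paper passes to $b\Delta(\phi^{-1/2}z)$, absorbs the $\dl b\cdot\dl z$ commutator, and applies $H^2$/Sobolev. Your version starts from Sobolev and reduces to $\int_Q \phi^{-1}|D^2 z|^2$, reads off $b\Delta z$ from the equation, and thus inherits $\int_Q\phi^{-1} z_t^2$ as an irreducible term, which you then control by a second multiplier pass with $\phi^{-1}z_t$ — a pass which is essentially the same integration by parts the paper does inside its cross-term computation. Both approaches work; the paper's is more economical precisely because it never needs a standalone bound on $\int_Q\phi^{-1}z_t^2$.

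One inaccuracy in your writeup worth flagging: the claim that, when multiplying by $\phi^{-1}z_t$ (without the $s^{-1}\lambda^{-1}$ prefactor), the remaining Carleman multipliers are ``dispatched by routine Cauchy--Schwarz against $D$'' is not quite right. The term $\int_Q\phi^{-1}(s\alpha_t z)^2$, using $|\alpha_t|\leq C\gamma(\lambda)\phi^2$, is of order $\gamma(\lambda)^2 s^{-1}\lambda^{-3}D$, which under $s\geq\gamma(\lambda)$ is $\lesssim\gamma(\lambda)\lambda^{-3}D$ — \emph{not} uniformly bounded by $D$, since $\gamma(\lambda)$ grows exponentially. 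The estimate for $\int_Q\phi^{-1}z_t^2$ alone is therefore $\lambda$-dependent in a bad way. The argument is saved only after you reinsert the $s^{-1}\lambda^{-1}$ prefactor from the Sobolev reduction, whereupon $s^{-1}\lambda^{-1}\cdot\gamma(\lambda)\lambda^{-3}D\leq\lambda^{-4}D$ using $s\geq\gamma(\lambda)$. So the bookkeeping works, but one has to carry the prefactor through the $z_t$-multiplier step rather than ``dispatch'' the lower-order terms in isolation; a cleaner fix is to use $s^{-1}\lambda^{-1}\phi^{-1}z_t$ as the test function from the outset, which is effectively what the paper's squared-equation identity does.
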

\begin{proof}
By squaring equation (\ref{z}), multiplying it by $s^{-1} \lambda^{-1} \phi^{-1}$, and integrating on $Q$, we find that
\begin{multline}
\int_Q {s^{-1} \lambda ^{-1} \phi^{-1} (z_t + \div(b \dl z))^2 dx dt} \leq \\
\leq C (D(s, \lambda, z) + \int_Q {s \lambda \phi (\dl b)^2 z^2 dx dt} + \|e^{s \alpha} g\|_{L^2(Q)}^2).
\end{multline}
On the other hand,
\begin{equation}
\begin{array}{c}
\displaystyle{
\int_Q {2s^{-1} \lambda^{-1} \phi^{-1} z_t \div(b \dl z) dx dt} = -\int_Q {2 b \dl (s^{-1} \lambda^{-1} \phi^{-1} z_t) \dl z dx dt} \geq }\\
\displaystyle{
\geq -\int_Q {s^{-1} \lambda^{-1} \phi^{-1} z_t^2 dx dt} - C \int_Q {s \lambda \phi^{-1} (\dl z)^2 dx dt} + \int_Q {(b s^{-1} \lambda^{-1} \phi^{-1})_t (\dl z)^2 dx dt} \geq }\\
\displaystyle{
\geq -\int_Q {s^{-1} \lambda^{-1} \phi^{-1} z_t^2 dx dt} - C \|b\|_{L^{\infty}(Q)}^2 D(s, \lambda, z) - C \|b_t\|_{L^{\infty}(0, T; L^n(\Omega))} \cdot }\\
\displaystyle{
\cdot \|s^{-1/2} \lambda^{-1/2} \phi^{-1/2} \dl z\|_{L^2(0, T; L^{\frac {2n}{n - 2}}(\Omega))} \cdot \|s^{-1/2} \lambda^{-1/2} \phi^{-1/2} \dl z\|_{L^2(Q)}. }
\end{array}
\end{equation}
Thus we obtain
\begin{equation}
\begin{array}{c}
\displaystyle{
\int_Q {s^{-1} \lambda ^{-1} \phi^{-1} (\div(b \dl z))^2 dx dt} \leq C (1 + \|\dl b\|_{L^{\infty}(0, T; L^n(\Omega))}^2 D(s, \lambda, z) + }\\
\displaystyle{
+ \epsilon \|s^{-1/2} \lambda^{-1/2} \phi^{-1/2} z\|_{L^2(0, T; H^2(\Omega)}^2 + \frac 1 {\epsilon} \|b_t\|_{L^{\infty} (0, T; L^n(\Omega))}^2 D(s, \lambda, z) + \|e^{s\alpha} g\|_{L^2(Q)}^2). }
\end{array}
\end{equation}
We get
\begin{equation}
\begin{array}{c}
\displaystyle{
\int_Q {(b \Delta (s^{-1/2} \lambda ^{-1/2} \phi^{-1/2} z))^2 dx dt} \leq C (1 + \|\dl b\|_{L^{\infty}(0, T; L^{2n}(\Omega))}^2 + }\\
\displaystyle{
+ \frac 1 {\epsilon} \|b_t\|_{L^{\infty} (0, T; L^n(\Omega))}^2) D(s, \lambda, z) + \epsilon \|s^{-1/2} \lambda^{-1/2} \phi^{-1/2} \dl z\|_{L^2(0, T; L^{\frac {2n}{n - 2}}(\Omega)}^2 + \|e^{s\alpha} g\|_{L^2(Q)}^2). }
\end{array}
\end{equation}
Since we know that $b \geq \mu$, by adjusting $\epsilon$ the conclusion (\ref{reg}) follows immediately.~\end{proof}

An immediate consequence is that
\begin{equation}
\|\dl z\|_{L^2(0, T; L^{\frac {2n}{n - 1}}(\Omega))}^2 \leq C ((1 + \zeta) D(s, \lambda, z) + \|e^{s\alpha} g\|_{L^2(Q)}^2).
\end{equation}

We continue with the proof of Proposition \ref{car}. Setting
\be
\begin{aligned}
B(t)z &= -\div(b \dl z) - (s^2 \lambda^2 \phi^2 b(\dl \psi)^2 + s \lambda^2 \phi b(\dl \psi)^2) z + s \alpha_t z\\
X(t)z &= -2s \lambda^2 \phi b(\dl \psi)^2 z -2s \lambda \phi b \dl \psi \cdot \dl z\\
Z(t)z &= s \lambda \phi (\dl \psi \cdot \dl b) z  + s \lambda \phi b \Delta \psi z,
\end{aligned}
\ee
we rewrite (\ref{z}) as
\be
z_t - B(t)z + X(t)z = Z(t)z + e^{s\alpha} g.
\ee

Then, starting with the relation
\begin{equation}
\begin{aligned}
\frac{d}{dt} \int_\Omega {B(t)z \cdot z dx} = &\int_\Omega {B(t)z_t \cdot z + B(t)z \cdot z_t + B_t(t)z \cdot z dx}\\
= &2\int_\Omega {B(t)z  (B(t)z - X(t)z + Z(t)z + e^{s\alpha} g) dx} + \int_\Omega {B_t(t)z \cdot z dx}
\end{aligned}
\end{equation}
and integrating it on $(0, T)$, we obtain
\begin{equation}
2\int_Q {(B(t)z)^2 dx dt} + 2Y = -2\int_Q {B(t)z (Z(t)z + e^{s\alpha} g) dx dt} - \int_Q {B_t(t)z \cdot z dx dt},
\label{ecy}
\end{equation}
where
\begin{equation}
\begin{aligned}
Y = -\int_Q &
{(2s \lambda \phi b \dl \psi \cdot \dl z + 2s \lambda^2 \phi b (\dl \psi)^2 z)}\\
&
{(\div(b \dl z) + (s^2 \lambda^2 \phi^2 b(\dl \psi)^2 + s \lambda^2 \phi b(\dl \psi)^2 z - s \alpha_t z) dx dt}.
\end{aligned}
\label{defy}
\end{equation}

Then,
\begin{equation}
\begin{aligned}
\int_Q {B_t(t)z \cdot z dx dt} = &\int_Q {b_t (\dl z)^2 dx dt} - \\
&- \int_Q {\big((s^2 \lambda^2 \phi^2 + s \lambda^2 \phi) b(\dl \psi)^2\big)_t z^2 + s \alpha_{tt} z^2 dx dt}.
\end{aligned}
\end{equation}
Since $|\alpha_{tt}| \leq \gamma(\lambda)|\phi_{tt}| \leq C \gamma(\lambda) \phi^3$, (here $C$ denotes a constant independent of $s$, $\lambda$, $z$, $\zeta$, and $g$), we eventually obtain the evaluation
\begin{equation}
\begin{aligned}
\bigg|\int_Q {B_t(t)z \cdot z dx dt}\bigg| \leq & C (1+ \|b_t\|_{L^{\infty}(0, T; L^n(\Omega))} (1 + \zeta)) D(s, \lambda, z) + \\
&+\|b_t\|_{L^{\infty}(0, T; L^n(\Omega))} \|e^{s\alpha} g\|_{L^2(Q)}^2.
\label{B_t}
\end{aligned}
\end{equation}
Furthermore,
\begin{multline}
|2\int_Q {B(t)z (Z(t)z +e^{s\alpha} g)dx dt}| \leq \int_Q {(B(t)z)^2 dx dt} + 2\int_Q {(Z(t)z)^2 + e^{2s\alpha} g^2 dx dt}\\
\leq \int_Q {(B(t)z)^2 dx dt} + C ((1 + \|\dl b\|_{L^{\infty}(Q)}^2) D(s, \lambda, z) + \|e^{s\alpha} g\|_{L^2(Q)}^2.
\label{BZ}
\end{multline}
From (\ref{ecy}), (\ref{B_t}), and (\ref{BZ}) we obtain
\begin{equation}
\begin{array}{c}
\displaystyle{
Y \leq C ((1 + \zeta^2) D(s, \lambda, z) + (1+\zeta)\|e^{s\alpha} g\|_{L^2(Q)}^2). }
\end{array}
\label{yless}
\end{equation}
On the other hand, we have the following lower estimates:
\begin{equation}
\begin{array}{c}
\displaystyle{
\int_Q {(s \lambda \phi b \dl \psi \cdot \dl z + s \lambda^2 \phi b(\dl \psi)^2 z) s \alpha_t z dx dt} =  }\\
\displaystyle{
= -\frac 1 2 \int_Q {\div (s^2 \lambda \phi \alpha_t b \dl \psi) z^2 dx dt} + \int_Q {s^2 \lambda^2 \phi \alpha_t b(\dl \psi)^2 z^2 dx dt}, }
\end{array}
\end{equation}
and therefore
\begin{equation}
\begin{array}{c}
\displaystyle{
\bigg|\int_Q {2(s \lambda \phi b \dl \psi \cdot \dl z + s \lambda^2 \phi b \dl \psi \cdot \dl \psi) s \alpha_t z dx dt}\bigg| \leq }\\
\displaystyle{
\leq C (1 + \|\dl b\|_{L^{\infty}(Q)}) D(s, \lambda, z), }
\end{array}
\label{y1}
\end{equation}
because $|\alpha_t| \leq C \gamma(\lambda) \phi^2$ and $|\alpha_{xt}| \leq C \lambda \phi^2$.

Moreover,
\begin{equation}
\begin{array}{c}
\displaystyle{
-\int_Q {2s \lambda^2 \phi b(\dl \psi)^2 z \cdot \div(b \dl z) dx dt} = \int_Q {2b \dl (s \lambda^2 \phi b(\dl \psi)^2 z) \cdot \dl z dx dt} \geq }\\
\displaystyle{
\geq \int_Q {2s \lambda^2 \phi b(\dl \psi)^2 b(\dl z)^2 dx dt} - C (1 + \|\dl b\|_{L^{\infty}(Q)}) D(s, \lambda, z). }
\end{array}
\label{y4}
\end{equation}
Then,
\begin{multline}
-\int_Q {(2s \lambda \phi b \dl \psi \cdot \dl z) (s^2 \lambda^2 \phi^2 + s \lambda^2 \phi) b(\dl \psi)^2 z dx dt} \geq \\
\geq \int_Q {(3s^3 \lambda^4 \phi^3 + 2s^2 \lambda^4 \phi^2) b^2(\dl \psi)^4 z^2 dx dt} - C (1 + \|\dl b\|_{L^{\infty}(Q)}) D(s, \lambda, z).
\label{y5}
\end{multline}
Finally,
\begin{equation}
\begin{array}{c}
\displaystyle{
-\int_Q {2s \lambda \phi (b \dl \psi \cdot \dl z) \div(b \dl z) dx dt} \geq -\int_{\Sigma} {2s \lambda \phi (b \dl \psi \cdot \dl z) (b \nu \cdot \dl z) d\sigma dt} + }\\
\displaystyle{
+ \int_Q {2s \lambda \phi b \dl z \cdot \dl (b \dl \psi \cdot \dl z) dx dt} + \int_Q {2 s \lambda^2 \phi b^2 (\dl \psi \cdot \dl z)^2 dx dt} - C D(s, \lambda, z). }
\end{array}
\label{y6.1}
\end{equation}
Here
\begin{multline}
\int_Q {2s \lambda \phi b \dl z \cdot \dl (b \dl \psi \cdot \dl z) dx dt} = \int_Q {2s \lambda \phi \sum_{i, j}  b D_i z D_i (b D_j \psi D_j z) dx dt} \\
\begin{aligned}
\geq & \int_Q {s \lambda \phi \dl \psi \cdot \dl (b (\dl z)^2) dx dt} + \\
& + \int_Q {2s \lambda \phi b \sum_{i, j} D_j \psi D_i z (D_i b D_j z - D_j b D_i z) dx dt} - C D(s, \lambda, z).
\end{aligned}
\label{y6.2}
\end{multline}
The first integral can be evaluated in the following manner:
\begin{equation}
\begin{array}{c}
\displaystyle{
\int_Q {s \lambda \phi b \dl \psi \cdot \dl (b(\dl z)^2) dx dt} \geq \int_{\Sigma} {s \lambda \phi (b \nu \cdot \dl \psi) (b(\dl z)^2) d\sigma dt} -}\\
\displaystyle{
- \int_Q {s \lambda^2 \phi b (\dl \psi)^2 b (\dl z)^2 dx dt} - C (1 + \|\dl b\|_{L^{\infty}(Q)}) D(s, \lambda, z).  }
\end{array}
\label{y6.3}
\end{equation}
The second integral can be treated as follows:
\begin{equation}
\left|\int_Q {2s \lambda \phi b \sum_{i, j} D_j \psi D_i z (D_i b D_j z - D_j b D_i z) dx dt}\right| \leq C \|\dl b\|_{L^{\infty}(Q)} D(s, \lambda, \phi).
\label{y6.4}
\end{equation}

Putting together inequalities (\ref{y6.1}), (\ref{y6.2}), (\ref{y6.3}), and (\ref{y6.4}), we arrive at the following estimate:
\begin{multline}
-\int_Q {2s \lambda \phi (b \dl \psi \cdot \dl z) \div(b \dl z) dx dt} \geq \\
\begin{aligned}
\geq &\int_{\Sigma} {-2s \lambda \phi (b \dl \psi \cdot \dl z) (b \nu \cdot \dl z) + s \lambda \phi (b \nu \cdot \dl \psi) (b(\dl z)^2) d\sigma dt} - \\
& - C (1 + \|\dl b\|_{L^{\infty}(Q)}) D(s, \lambda, z).
\end{aligned}
\label{y6}
\end{multline}

Since $\psi > 0$ on $\Omega$ and $\psi = 0$ on $\partial \Omega$, we have $\nu = - \frac{\dl \psi}{|\dl \psi|}$. Moreover, because $z$ satisfies a Dirichlet-type boundary condition, it follows that $\dl z$ is parallel to $\nu$ almost everywhere on $\Sigma$. Therefore,
\begin{equation}
\begin{array}{c}
\displaystyle{
\int_{\Sigma} {s \lambda \phi (-2(b \dl \psi \cdot \dl z) (b \nu \cdot \dl z) + (b \nu \cdot \dl \psi) (b (\dl z)^2)) d\sigma dt} = }\\
\displaystyle{
= \int_{\Sigma} {s \lambda \phi |\dl \psi| b^2(\nu \cdot \dl z)^2 d\sigma dt} \geq 0. }
\end{array}
\label{y6.5}
\end{equation}

Combining the inequalities (\ref{y1}), (\ref{y4}), (\ref{y5}), and (\ref{y6}), and taking into account (\ref{y6.5}), we obtain
\be
\begin{aligned}
Y \geq &\int_Q {s^3 \lambda^4 \phi^3 b^2(\dl \psi)^4 \cdot z^2 + s \lambda^2 \phi b(\dl \psi)^2 b(\dl z)^2 dx dt} - \\
& - C (1 + \|\dl b\|_{L^{\infty}(Q)}) D(s, \lambda, z).
\end{aligned}
\label{ygeq}
\end{equation}
From (\ref{ygeq}) and (\ref{yless}) we get
\begin{multline}
\int_Q {s^3 \lambda^4 \phi^3 b^2(\dl \psi)^4 \cdot z^2 + s \lambda^2 \phi b(\dl \psi)^2 b(\dl z)^2 dx dt} \leq \\
\leq C ((1 + \zeta^2) D(s, \lambda, z) + (1+\zeta) \|e^{s\alpha} g\|_{L^2(Q)}^2).
\end{multline}
Because $b \geq \mu$ and $|\dl \psi| \geq c$ on $\Omega \setminus \omega_0$, by making $\lambda$ sufficiently large ($\lambda \geq \lambda_0(\zeta) = C (1 + \zeta^2)$, as stated in the hypothesis) we obtain
\begin{multline}
\int_Q {s^3 \lambda^3 \phi^3 z^2 + s \lambda \phi (\dl z)^2 dx dt} \leq \\
\leq C ((1 + \frac 1 \lambda) \int_{Q_{\omega_0}} {s^3 \lambda^3 \phi^3 z^2 + s \lambda \phi (\dl z)^2 dx dt} + \|e^{s\alpha} g\|_{L^2(Q)}^2).
\label{ineq_p0}
\end{multline}

Substituting $p$ back into (\ref{ineq_p0}), %by the same method as in \cite{barbu}
we eventually get
\begin{equation}
\begin{array}{c}
\displaystyle{
\int_Q {e^{2s \alpha}(s^3 \lambda^3 \phi^3 p^2 + s \lambda \phi (\dl p)^2) dx dt} \leq }\\
\displaystyle{
\leq C (\int_{Q_{\omega_0}}{e^{2s \alpha}(s^3 \lambda^3 \phi^3 p^2 + s \lambda \phi (\dl p)^2) dx dt} + \|e^{s\alpha} g\|_{L^2(Q)}^2). }
\end{array}
\label{p3}
\end{equation}

Choose $\chi \in C_c^{\infty}(\omega)$, $\chi \geq 0$, such that $\chi \equiv 1$ on $\omega_0$. If we multiply equation (\ref{p}) by $e^{2s \alpha} \chi \phi p$ and integrate on $Q$ we obtain that
\begin{equation}
\begin{array}{c}
\displaystyle{
\int_Q {e^{2s \alpha} \chi \phi b(\dl p)^2 dx dt} \leq }\\
\displaystyle{
\leq \int_Q {\div (b \dl (e^{2s \alpha} \chi \phi)) p^2 dx dt} + C (\int_{Q_{\omega}} {e^{2s \alpha} s \gamma(\lambda) \phi^3 p^2 dx dt} + \|e^{s\alpha} g\|_{L^2(Q)}^2). }
\end{array}
\end{equation}
Thus we obtain, by making a few more estimates, that
\begin{equation}
\int_{Q_{\omega_0}} {e^{2s \alpha} \phi (\dl p)^2 dx dt} \leq C ((1 + \zeta) \int_{Q_{\omega}} {e^{2s \alpha} s^2 \lambda^2 \phi^3 p^2} + \|e^{s\alpha} g\|_{L^2(Q)}^2).
\label{p4}
\end{equation}

From (\ref{p3}) and (\ref{p4}) we obtain the final result (\ref{Carleman}).
\end{proof}

Proceeding as in \cite{barbu}, one obtains the subsequent corollary to Proposition \ref{car} (the Carleman inequality):
\begin{corollary}
Under the assumptions of Proposition \ref{car}, the following inequalities hold:
\begin{equation}
\int_{\Omega} {p^2(0) dx} \leq C_{s, \lambda} (1 + \zeta) \int_{Q_{\omega}}{e^{2s\alpha} \phi^3 p^2 dx dt} + \int_Q {e^{2s\alpha} g^2 dx dt}
\label{cor_p1}
\end{equation}
and
\begin{equation}
\int_0^{T/2} \int_{\Omega} {p^2 dx dt} \leq C_{s, \lambda} (1 + \zeta) \int_{Q_{\omega}}{e^{2s\alpha} \phi^3 p^2 dx dt} + \int_Q {e^{2s\alpha} g^2 dx dt},
\label{cor_p2}
\end{equation}
where $C_{s, \lambda}$ is a constant that does not depend on $p$, $g$, or $\zeta$, but may depend on $s$, $\lambda$, $T$, $\Omega$, and $\omega$.
\label{cor_p0}
\end{corollary}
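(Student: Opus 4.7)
I would prove both inequalities by combining the Carleman estimate (\ref{Carleman}) with an energy identity for the backward equation (\ref{p}). The starting observation is that on any closed subinterval $[t_1, t_2] \subset (0, T)$, the weights $e^{2s\alpha}$ and $e^{2s\alpha}\phi^3$ are bounded between positive constants depending on $s$, $\lambda$, $T$, and the interval. Consequently (\ref{Carleman}) directly produces the unweighted $L^2$ bound
\begin{equation*}
\int_{t_1}^{t_2}\!\int_\Omega p^2\,dx\,dt \leq C_{s,\lambda}\left[(1+\zeta)\int_{Q_\omega} e^{2s\alpha}\phi^3 p^2\,dx\,dt + \int_Q e^{2s\alpha}g^2\,dx\,dt\right]
\end{equation*}
on any such interior slab.

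To prove (\ref{cor_p2}) I would split $\int_0^{T/2}\|p\|_2^2\,dt$ as $\int_0^{T/4} + \int_{T/4}^{T/2}$. The second piece is immediate from the observation above. For the first, multiplying $p_t + \text{div}(b\nabla p) = g$ by $p$ in $L^2(\Omega)$ and integrating by parts gives $\tfrac{d}{dt}\|p\|_2^2 = 2\int_\Omega b|\nabla p|^2\,dx + 2\int_\Omega gp\,dx \geq -\|p\|_2^2 - \|g\|_2^2$, and a Gr\"onwall argument on $[t, t_0]$ yields $\|p(t)\|_2^2 \leq C_T\bigl(\|p(t_0)\|_2^2 + \|g\|_{L^2(Q)}^2\bigr)$ for $0 \leq t \leq t_0 \leq T$. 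Integrating $t$ over $[0, T/4]$ and selecting $t_0 \in [T/4, T/2]$ so that $\|p(t_0)\|_2^2 \leq \tfrac{4}{T}\int_{T/4}^{T/2}\|p\|_2^2\,ds$ (itself controlled by the initial observation) produces (\ref{cor_p2}).

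For (\ref{cor_p1}), I would introduce a smooth nonincreasing cutoff $\theta\in C^\infty([0, T])$ with $\theta\equiv 1$ on $[0, T/2]$ and $\theta\equiv 0$ on $[3T/4, T]$. Multiplying (\ref{p}) by $\theta^2 p$ and integrating by parts over $Q$, using $p\mid_\Sigma = 0$, $\theta(T) = 0$, $\theta(0) = 1$, yields the identity
\begin{equation*}
\tfrac{1}{2}\|p(0)\|_2^2 + \int_Q \theta^2 b|\nabla p|^2\,dx\,dt = -\int_Q \theta\theta' p^2\,dx\,dt - \int_Q \theta^2 gp\,dx\,dt.
\end{equation*}
The gradient term is nonnegative and is discarded. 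Because $\theta'$ is supported on $[T/2, 3T/4]$, the first right-hand term is bounded by $C\int_{T/2}^{3T/4}\|p\|_2^2\,dt$, which the initial observation controls by the Carleman right-hand side. The cross term is bounded by Cauchy--Schwarz, $\bigl|\int\theta^2 gp\bigr| \leq \tfrac{1}{2}\int\theta^2 g^2 + \tfrac{1}{2}\int\theta^2 p^2$, and the $p^2$ contribution is absorbed using (\ref{cor_p2}).

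The main technical nuisance is controlling the unweighted $\|g\|_{L^2(Q)}^2$-type contributions that appear in both energy steps: since $e^{2s\alpha}$ degenerates at $t = 0$, the naive bound $\int_Q g^2 \leq C_{s,\lambda}\int_Q e^{2s\alpha}g^2$ is false, so a more delicate weighted multiplier argument—as carried out in \cite{barbu}—is required to re-absorb these contributions into the target quantity $\int_Q e^{2s\alpha}g^2$ on the right-hand side.
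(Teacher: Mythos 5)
Your approach is essentially the one the paper uses. The paper's proof starts from the energy identity
\begin{equation*}
\|p(t_1)\|_2^2 + \mu\int_{t_1}^{t_2}\|\nabla p\|_2^2\,ds \leq \|p(t_2)\|_2^2 + \int_{t_1}^{t_2}\!\int_\Omega |pg|\,dx\,dt,
\end{equation*}
takes $t_1=0$, averages over $t_2\in[T/2,3T/4]$, observes that $\int_{T/2}^{3T/4}\|p\|_2^2\,dt\leq C_{s,\lambda}\int_Q e^{2s\alpha}\phi^3 p^2\,dx\,dt$ because the Carleman weight is bounded below on that interior slab, and then invokes Proposition~\ref{car}. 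Your cutoff $\theta$ plays the same role as the paper's averaging in $t_2$, and your $[0,T/4]$ Gr\"onwall step for (\ref{cor_p2}) is replaced in the paper by simply keeping the $\mu\int_0^{T/2}\|\nabla p\|_2^2\,dt$ term on the left and applying Poincar\'e; these are cosmetically different but mathematically the same device.

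The genuinely useful observation in your write-up is the last paragraph. You are right that the energy step inevitably produces the \emph{unweighted} quantity $\int_Q g^2\,dx\,dt$, and that this is not majorized by $\int_Q e^{2s\alpha}g^2\,dx\,dt$ since $e^{2s\alpha}$ degenerates at $t=0$ and $t=T$. You should be aware, though, that this issue is not resolved in the paper's proof either: the paper's displayed intermediate inequality also has $\int_Q g^2\,dx\,dt$ on its right-hand side, and the paper simply asserts that the Carleman inequality then yields (\ref{cor_p1}) and (\ref{cor_p2}) as stated, without the promised ``re-absorption'' you anticipate. In the only places the corollary is actually applied in the paper, either $g\equiv 0$ (in equation (\ref{dual1})), or the specific structure $g = e^{-2s\alpha}(\cdots)$ arising from the adjoint system (\ref{dual}) is in play, so the issue is moot there; but as a freestanding statement the weight $e^{2s\alpha}$ on the $g^2$-term in (\ref{cor_p1})--(\ref{cor_p2}) does not follow from the argument given and is most naturally read as an unweighted $\int_Q g^2$. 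So rather than deferring to ``a more delicate weighted multiplier argument,'' you should flag that the stated weight on $g^2$ appears not to be what the energy-plus-Carleman argument proves, and that it does not matter for the paper's applications.
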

\begin{proof} Both inequalities stem from the fact that for $t_1 \leq t_2$
\begin{equation}
\|p(t_1)\|_2^2 + \mu\int_{t_1}^{t_2} {\|\dl p(s)\|_2^2 ds} \leq \|p(t_2)\|_2^2 + \int_{t_1}^{t_2} \int_{\Omega} {|p g| dx dt}.
\label{parara} 
\end{equation}
Integrating (\ref{parara}) in $t_2$, one obtains that
\begin{equation}
\int_{\Omega} {p^2(0) dx dt} + \mu \int_0^{T/2} \int_{\Omega} {(\dl p)^2 dx dt} \leq C(\int_{T/2}^{3T/4} \int_{\Omega} {p^2 dx dt} + \int_Q {g^2 dx dt})
\end{equation}
and taking into account the fact that
\begin{equation}
\int_{T/2}^{3T/4} \int_{\Omega} {p^2 dx dt} \leq C_{s, \lambda} \int_Q {e^{2s\alpha} \phi^3 p^2 dx dt}
\end{equation}
the Carleman inequality (\ref{Carleman}) allows one to infer (\ref{cor_p1}) and (\ref{cor_p2}).
\end{proof}

\subsection{Regularity Results}
In the following we prove estimates concerning the uncontrolled solution $Y$ of (\ref{omogen}), as well as one regarding the solution $y$ of the linear equation (\ref{y}).

%In the course of the proof we use Schauder's Theorem, whose proof can be found in \cite[p. 108]{carja}. We also use the fact that if $X$ is a Banach space and $F:B \rightarrow X$ has a closed graph, then $F$ is continuous.

It is a well-known fact (see for example \cite{pazy}) that if $mu + f \in L^2(Q)$ and $y_0 \in L^2(\Omega)$, then equation (\ref{dif}) has a unique $C([0, T]; L^2(\Omega)) \cap L^2(0, T; W^{1, 2}(\Omega))$ solution.

It is enough to prove all of the following estimates for the case when $Y \in C^{2+\alpha, 1+\alpha/2}(Q)$ and likewise for $y$. Given a less smooth solution $Y$ of (\ref{omogen}), one can replace $Q$ by a sequence of smooth domains $Q^i$ that exhaust it and $y_0$ and $f$ by sequences of smooth functions $y_0^i$ and $f^i$ that approximate them. The corresponding solutions $Y^i$ are $C^{2+\alpha, 1+\alpha/2}$ smooth by parabolic regularity (Theorem 10.1, Chapter 3 of \cite{ladijenskaia}) and converge to $Y$ in $C(0, T; L^2(\Omega))$. If the $C^{2+\alpha, 1+\alpha/2}(Q)$ solutions $Y^i$ fulfill an estimate in a uniform manner, then so does their limit $Y$.%Since all solutions $Y$ considered in the sequel have $L^{\infty}(0, T; L^n(\Omega))$ gradient, Proposition \ref{propp} shows they are unique for the given initial data. Therefore the estimates hold for all solutions.

The smooth initial data $y_0^i$ need to satisfy the compatibility conditions by vanishing on the boundary $\partial \Omega$ in order to generate solutions $Y^i$ of $C^{2+\alpha, 1+\alpha/2}(Q)$ regularity. For this reason one has to assume that $y_0 \in W^{1, n}_0(\Omega)$ in the hypotheses of Propositions \ref{propnoua}, \ref{prop1} and \ref{prop2}, as it is the $W^{1, n}(\Omega)$ limit of a sequence of functions $y_0^i$ that vanish on $\partial \Omega$.

The first estimate, Proposition \ref{Yqq}, is trivial.
\begin{proposition}
\begin{equation}
\|Y-y_s\|_{C(0, 1; L^q(Q))} \leq C \|y_0-y_s\|_q.
\label{Yq}
\end{equation}
\label{Yqq}
\end{proposition}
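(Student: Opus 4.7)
The plan is to run a standard $L^q$ energy estimate on the difference $w = Y - y_s$. Subtracting the stationary equation $-\Delta a(y_s) = f$ from (\ref{omogen}) gives
\begin{equation*}
w_t = \Delta\bigl(a(Y) - a(y_s)\bigr), \quad w\mid_\Sigma = 0, \quad w(0) = y_0 - y_s.
\end{equation*}
By the approximation argument described after Corollary \ref{cor_p0}, it suffices to prove the estimate for the smooth approximants $Y^i$ and pass to the limit, so throughout we may treat $w$ as classical.

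I would multiply the equation by $|w|^{q-2} w$ and integrate over $\Omega$. The time term contributes $\frac{1}{q}\frac{d}{dt}\|w\|_q^q$; integration by parts on the right-hand side (legitimate since $a(Y)-a(y_s)$ vanishes on $\partial\Omega$) produces
\begin{equation*}
-(q-1)\int_\Omega |w|^{q-2}\,\nabla w \cdot \nabla\bigl(a(Y)-a(y_s)\bigr)\, dx.
\end{equation*}
The decisive step is to split
\begin{equation*}
\nabla\bigl(a(Y)-a(y_s)\bigr) = a'(Y)\,\nabla w + \bigl(a'(Y)-a'(y_s)\bigr)\,\nabla y_s.
\end{equation*}
The first piece gives the manifestly non-positive monotonicity term $-(q-1)\int |w|^{q-2} a'(Y) |\nabla w|^2\,dx$; the second is a cross term that I would control using the mean-value bound $|a'(Y)-a'(y_s)| \leq \|a''\|_\infty |w|$ (finite on the bounded range relevant here, thanks to the hypotheses on $a$) together with $\|\nabla y_s\|_\infty < \infty$, which follows from $y_s \in W^{2,q}(\Omega)$ with $q>n$.

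Applying Young's inequality to the cross term splits it as $\varepsilon \int |w|^{q-2}|\nabla w|^2\,dx + C_\varepsilon \int |w|^q\,dx$; the first summand is absorbed into the monotonicity term via the lower bound $a'(Y) \geq \mu > 0$, leaving the differential inequality $\frac{d}{dt}\|w\|_q^q \leq C_{a, y_s}\,\|w\|_q^q$. Gronwall on $[0,T]$ then yields $\|w(t)\|_q \leq e^{C_{a,y_s}T}\|y_0-y_s\|_q$, uniformly in $t$, which is precisely (\ref{Yq}). I do not expect any real obstacle: the paper itself labels this estimate trivial, and the only point requiring mild care is the finiteness of the cross-term constant, which is automatic under the standing assumptions.
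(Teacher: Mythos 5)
Your proof is correct and takes essentially the same route as the paper: the paper rewrites \eqref{omogen} as $(Y-y_s)_t - \div(a'(Y)\nabla(Y-y_s)) = \div((a'(Y)-a'(y_s))\nabla y_s)$, which is precisely your decomposition $\nabla(a(Y)-a(y_s)) = a'(Y)\nabla w + (a'(Y)-a'(y_s))\nabla y_s$ with the coercive piece moved to the left, and then multiplies by $|Y-y_s|^{q-2}(Y-y_s)$ and invokes Gronwall exactly as you do. You have simply spelled out the Young absorption step that the paper leaves implicit.
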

\begin{proof}
Rewriting the equation as
\begin{equation}
(Y-y_s)_t - \div(a'(Y)\dl(Y-y_s)) = \div((a'(Y)-a'(y_s))\dl y_s),
\end{equation}
multiplying by $|Y-y_s|^{q-2}(Y-y_s)$, and integrating in $t$, one obtains that
\begin{multline}
\sup_{t \in [0, T]} \|Y(t)-y_s\|_q^q + \int_Q {|Y-y_s|^{q-2}(\dl(Y-y_s))^2 dx dt} \leq \\
\leq \|y_0-y_s\|_q^q + C \int_Q {|Y-y_s|^{q-1} |\dl(Y-y_s)|},
\end{multline}
whence estimate (\ref{Yq}) follows by Gronwall's Lemma.
\end{proof}

%Proposition \ref{propnoua} establishes that for any sufficiently small $\zeta$ there exists $\rho>0$ such that if $\|z-y_s\|_{W^{1, \infty}(0, T; L^q(\Omega)) \cap L^{\infty}(0, T; W^{1, \infty}(\Omega))} \leq \zeta$ and $\|\Delta a(y_0) + f\|_q \leq \rho$, $\|u\|_{W^{1, \infty}(0, T; L^q(\Omega))} \leq \rho$, and if $T \leq T_0$ with $T_0$ independent of $\zeta$, then the solution $y$ of the linear equation (\ref{y}) satisfies $\|y-y_s\|_{W^{1, \infty}([0, T]; L^q(\Omega)) \cap L^{\infty}(0, T; W^{1, \infty}(\Omega))} \leq \zeta$.
\begin{proof}[Proof of Proposition \ref{propnoua}]
As stated previously, one can assume without loss of generality that $y \in C^{2+\alpha, 1+\alpha/2}(Q)$ as long as $y_0$ vanishes on the boundary. We also assume that $\rho \leq 1$.
Taking the derivative of equation (\ref{y}) with respect to $t$, we get that
\begin{equation}
y_{tt} - \dl(a'(z)\dl y_t) = mu_t + \dl(a''(z)z_t \dl y),\ y_t(0) = \Delta a(y_0) + f,\ y_t \mid_{\Sigma} = 0.
\label{yt}
\end{equation}
Multiplying by $|y_t|^{q-2}y_t$ and integrating in $t$, we find that
\begin{multline}
\sup_{t \in [0, T]} \|y_t(t)\|_q^q + \int_Q {|y_t|^{q-2} (\dl y_t)^2 dx dt} \leq \\
\leq \|y_t(0)\|_q^q + C_0(\|u_t\|_{L^q(Q)}^q + \int_Q {(a''(z) z_t \dl y)^2 |y_t|^{q-2} dx dt}).
\label{ine}
\end{multline}
Consider the fact, proved below in (\ref{qed}), that
\begin{equation}
\|\dl(y-y_s)\|_{L^{\infty}(Q)} \leq C (\|y_t\|_{L^{\infty}(0, T; L^q(\Omega))} + \rho^{1-\beta_0}).
\end{equation}
Then, the last term of (\ref{ine}) can be bounded as follows:
\begin{multline}
\int_Q {(a''(z) z_t \dl y)^2 |y_t|^{q-2} dx dt} \leq C T \|z_t\|_{L^{\infty}(0, T; L^q(\Omega))}^2 (\|\dl (y-y_s)\|_{L^{\infty}(Q)}^2 + \\
\begin{aligned}
&+ \|\dl y_s\|_{L^{\infty}(Q)}^2) \|y_t\|_{L^{\infty}(0, T; L^q(\Omega))}^{q-2} \\
\leq & C T \zeta^2 \|y_t\|_{L^{\infty}(0, T; L^q(\Omega))}^{q-2} (\|y_t\|_{L^{\infty}(0, T; L^q(\Omega))}^2 + \rho^{2(1-\beta_0)}) + \\
& +C_1 (T \zeta^2)^{q/2} +\frac 1 {2 C_0} \|y_t\|_{L^{\infty}(0, T; L^q(\Omega))}^q,
\label{in}
\end{aligned}
\end{multline}
where $C_0$ was introduced in (\ref{ine}).

A bound on $\|y_t(0)\|_q$ can be obtained from the fact that $u(0) = 0$ (because of the rapid decay manifested in (\ref{u})) and $z(0)=y_0$, so
\begin{equation}
\|y_t(0)\|_q = \|\Delta a(y_0) + f\|_q \leq \rho.
\label{yt0}
\end{equation}
We conclude that, for small $\zeta$,
\begin{equation}
\|y_t\|_{L^{\infty}(0, T; L^q(\Omega))}^q + \int_Q {|y_t|^{q-2} (\dl y_t)^2 dx dt} \leq C (\rho^q + (T \zeta^2 \rho^{2(1-\beta_0)})^{q/2}) + C_0 C_1 (T \zeta^2)^{q/2}.
\label{undoi}
\end{equation}
Let us specify $T$ (left unspecified until now) to be such that $C_0 C_1 T^{q/2} \leq \frac 1 2$, where $C_0$ was introduced in (\ref{ine}) and $C_1$ was introduced in (\ref{in}). Note that $C_0$ and $C_1$ depend only on $y_s$ and $a$.

Then, for sufficiently small $\rho$, we retrieve from (\ref{undoi}) that $\|y_t\|_{L^{\infty}(0, T; L^q(\Omega))} \leq \zeta$.

Now let us prove the desired regularity of $\dl y$. Rewrite equation (\ref{y}) in the following form:
\begin{equation}
\div(a'(z)\dl (y-y_s)) = y_t - mu - \div((a'(z)-a'(y_s))\dl y_s).
\label{special}
\end{equation}
Based on the bounds for the right-hand side, we use the maximum principle to evaluate the left-hand side. We state the needed theorems in an abridged form.

\begin{theorem}[Theorem 8.16, \cite{gilbarg}, p. 191]
Let the operator
\be
Lu=\sum_{i, j} D_i(a_{ij} D_j u) 
\ee
be uniformly elliptical: $\lambda |\xi|^2 \leq \sum_{i, j} a_{ij} \xi_i \xi_j \leq \Lambda ^2 |\xi|^2$. Then the solution to the Dirichlet problem
\be
Lu=g + \sum_i D_i f_i,\ u=0 \text{ on } \partial\Omega
\lb{ec_div}
\ee
fulfills the estimate
\be
\|u\|_{\infty} \leq C(n, q, |\Omega|) \lambda^{-1} (\|f\|_q + \|g\|_{q/2}).
\ee
\end{theorem}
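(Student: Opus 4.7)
The plan is the classical Stampacchia truncation argument, which converts an energy inequality into an $L^{\infty}$ bound by iterating on the superlevel sets of $u$. First I would split $u = u^+ - u^-$ and argue symmetrically on each sign; for each $k \geq 0$ set $v_k = (u-k)^+ \in H^1_0(\Omega)$, supported on $A(k) = \{u > k\}$. Testing the weak form of (\ref{ec_div}) against $v_k$ and invoking uniform ellipticity gives
\be
\lambda \int_{A(k)} |\dl v_k|^2 \,dx \leq \int_{A(k)} |g|\, v_k \,dx + \int_{A(k)} |f|\, |\dl v_k| \,dx.
\ee

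Next I would apply Young's inequality to absorb the $|\dl v_k|$ factor in the $f$-term into the coercive left-hand side, and then combine H\"older's inequality with the Sobolev embedding $H^1_0 \hookrightarrow L^{2^*}$, where $2^* = 2n/(n-2)$ for $n \geq 3$ (and any fixed large finite exponent when $n = 2$). After rearranging the resulting inequality, which is quadratic in $\|v_k\|_{2^*}$, one obtains
\be
\|v_k\|_{L^{2^*}(\Omega)} \leq C \lambda^{-1} \bigl(\|f\|_q + \|g\|_{q/2}\bigr) |A(k)|^{\gamma}, \qquad \gamma = \tfrac{1}{2} - \tfrac{1}{q}.
\ee
The crucial algebraic fact, which isolates exactly the hypothesis $q > n$, is that $2^{*}\gamma = n(q-2)/(q(n-2)) > 1$ if and only if $q > n$.

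Finally, since $v_k \geq h - k$ on $A(h)$ whenever $h > k$, one has $\|v_k\|_{2^*} \geq (h-k) |A(h)|^{1/2^*}$. Combining this with the previous bound yields
\be
|A(h)| \leq \Big[\frac{K}{h-k}\Big]^{2^*} |A(k)|^{2^*\gamma}, \qquad K = C \lambda^{-1}\bigl(\|f\|_q + \|g\|_{q/2}\bigr).
\ee
Stampacchia's iteration lemma, which applies precisely because $2^*\gamma > 1$, then delivers a level $k_0 = cK$ at which $|A(k_0)| = 0$, hence $u \leq cK$ almost everywhere; repeating the argument with $-u$ produces the two-sided bound $\|u\|_{\infty} \leq cK$. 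The main obstacle is not conceptual but exponent bookkeeping: one must track the powers so that the threshold $2^{*}\gamma > 1$ is driven precisely by $q > n$, and one must separately handle $n = 2$ by replacing the critical Sobolev exponent with a suitable finite one. The sharp $\lambda^{-1}$ scaling then comes for free once Young's inequality is applied with the correct weight, and the dependence on $|\Omega|$ enters only through the Sobolev constant.
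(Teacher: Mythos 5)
The paper does not prove this statement; it is invoked as a black-box citation of Gilbarg--Trudinger, Theorem 8.16, so there is no internal proof to compare against. That said, your Stampacchia truncation argument is a correct and standard proof of the cited result, and the exponent bookkeeping is right: after Young's inequality (with the $\lambda$-weighted split) and Sobolev embedding, the $f$-term gives $|A(k)|^{1/2-1/q}$ and the $g$-term gives $|A(k)|^{1/2+1/n-2/q}$; since $q>n$, the latter exponent is larger, so one absorbs the excess into a power of $|\Omega|$ and works with $\gamma = 1/2-1/q$ throughout, and indeed $2^*\gamma = n(q-2)/(q(n-2)) > 1$ is equivalent to $q > n$, which is exactly what drives Stampacchia's iteration lemma. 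The $\lambda^{-1}$ scaling and the $|\Omega|$-dependence also land where you claim. One small remark for completeness: when handling the $f$-term you should be explicit that the weak formulation, tested with $v_k = (u-k)^+$, yields $\lambda\int|\dl v_k|^2 \le \int|g|v_k + \int|f||\dl v_k|$ because $D_j u\, D_i v_k = D_j v_k\, D_i v_k$ a.e. (so the ellipticity can be applied to $v_k$ directly). Gilbarg--Trudinger actually derive their Theorem 8.16 from their Theorem 8.15, whose proof runs through a Moser-type test-function/iteration scheme rather than level-set iteration, so your route is a genuinely different (and arguably more elementary) path to the same bound; Stampacchia's method has the advantage of isolating the threshold $q > n$ as a transparent algebraic condition on a single exponent, while the GT route is more flexible for extensions to subsolutions and inhomogeneous boundary data.
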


Applied to (\ref{special}), the theorem leads to
\begin{multline}
\|y-y_s\|_{\infty} \leq C(\|y_t\|_q + \|mu\|_q + \|(a'(z)-a'(y_s))\dl y_s\|_q) \leq C(\|y_t\|_q + \rho).
\label{infinit}
\end{multline}

Now we employ the following H\"older estimate for the gradient, Theorem 8.33 of \cite[p. 210]{gilbarg} (again the statement is abridged):
\begin{theorem}
Let $u \in C^{1, \alpha}(\Omega)$ be a weak solution of (\ref{ec_div}) in a $C^{1, \alpha}$ domain $\Omega$. Then we have
\be
\|u\|_{C^{1, \alpha}} \leq C(\|u\|_{\infty} + \|g\|_{\infty} + \|f\|_{C^{0, \alpha}}
\ee 
\end{theorem}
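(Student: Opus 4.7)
The plan is to establish this Hölder gradient estimate via a coefficient-freezing/perturbation argument in the Campanato framework, which is the standard route to Schauder-type estimates in divergence form. First I would localize: cover $\ov\Omega$ by finitely many balls and, for those meeting $\partial\Omega$, use $C^{1,\alpha}$ charts to flatten the boundary. Such charts preserve the divergence structure and the $C^{0,\alpha}$ regularity class of the coefficients, so it suffices to prove the estimate on interior balls $B_r(x_0) \subset \Omega$ and on flat half-balls with zero Dirichlet data on the flat face.

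On each such ball I would freeze the coefficients, setting $L_0 u = \sum_{i,j} a_{ij}(x_0) D_i D_j u$, and decompose $u = v + w$ where $v$ solves $L_0 v = 0$ in $B_r(x_0)$ with $v = u$ on $\partial B_r(x_0)$, so that $w = u - v$ satisfies a Dirichlet problem for $L_0$ with right-hand side $\sum_i D_i((a_{ij}(x_0) - a_{ij}(x)) D_j u) + g + \sum_i D_i(f_i - f_i(x_0))$. The constant-coefficient solution $v$ enjoys the Campanato decay
\begin{equation*}
\int_{B_\rho} |\dl v - (\dl v)_{B_\rho}|^2 \, dx \leq C(\rho/r)^{n+2} \int_{B_r} |\dl v - (\dl v)_{B_r}|^2 \, dx, \quad 0 < \rho \leq r,
\end{equation*}
which one obtains by differentiating $L_0 v = 0$ and applying Caccioppoli on interior balls. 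For $w$, energy estimates combined with the $C^{0,\alpha}$ oscillation of the coefficients and of $f$ yield
\begin{equation*}
\int_{B_r} |\dl w|^2 \, dx \leq C r^{n + 2\alpha} \bigl([a]_{C^{0,\alpha}}^2 \|\dl u\|_{L^{\infty}(B_r)}^2 + \|g\|_\infty^2 + [f]_{C^{0,\alpha}}^2\bigr).
\end{equation*}

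Combining these two bounds gives a Campanato decay of the form $\int_{B_\rho} |\dl u - (\dl u)_{B_\rho}|^2 \leq C \rho^{n + 2\alpha}$ on small balls, and Campanato's characterization of Hölder spaces then converts this into the stated $C^{1,\alpha}$ estimate, with the claimed dependence on $\|u\|_\infty + \|g\|_\infty + \|f\|_{C^{0,\alpha}}$. The main obstacle is absorbing the $\|\dl u\|_{L^{\infty}(B_r)}$ factor appearing on the right-hand side of the $w$-estimate: this is handled by choosing $r$ small enough that $C r^{2\alpha} [a]_{C^{0,\alpha}}^2 < \tfrac{1}{2}$ and iterating, using the preliminary $L^\infty$ bound from the first Gilbarg theorem as a starting point for the bootstrap. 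At the boundary one must additionally verify that the analog of the decay for $v$ holds on half-balls with zero Dirichlet data on the flat face; this follows by odd reflection across the flat piece and applying the interior argument to the reflected equation, after checking that reflection preserves $C^{0,\alpha}$ regularity of the (suitably symmetrized) coefficients.
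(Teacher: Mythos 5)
The paper does not prove this statement; it is quoted verbatim (in abridged form) from Gilbarg and Trudinger, Theorem 8.33, and used as a black box in the proof of Proposition~\ref{propnoua}. There is therefore no proof in the paper against which to compare yours, and the question is only whether your sketch is a viable route to the stated estimate.

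Your proposal is a correct outline of the standard Campanato-type proof of the Schauder interior and boundary gradient estimates for divergence-form operators, and it is essentially the scheme that underlies Gilbarg--Trudinger's own argument in Section 8.11 (freeze the coefficients, compare against the constant-coefficient solution, exploit the decay of solutions of constant-coefficient problems, iterate, and conclude via Campanato's characterization of Hölder spaces). Two points, however, are too casually handled. First, the perturbation bound you wrote for $w$ should carry $\int_{B_r}|\dl u|^2$ rather than $\|\dl u\|_{L^\infty(B_r)}^2$ on the right: the energy estimate naturally produces the $L^2$ quantity, and the iteration then proceeds in two stages (first a Morrey-type decay for $\int_{B_\rho}|\dl u|^2$ initialized from Caccioppoli and the a priori $\|u\|_\infty$ bound, then the Campanato decay for $\dl u - (\dl u)_{B_\rho}$). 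With $\|\dl u\|_{L^\infty}$ instead, the ``bootstrap'' you invoke is harder to close cleanly, since the first Gilbarg theorem bounds $\|u\|_\infty$, not $\|\dl u\|_\infty$, and an interpolation step (or the two-stage Morrey--Campanato argument) is genuinely needed. Second, the boundary step via odd reflection does not work directly for a general constant-coefficient operator: odd reflection $\tilde v(x',x_n)=-v(x',-x_n)$ solves $L_0\tilde v=0$ only when the mixed entries $a_{in}(x_0)$ vanish for $i<n$, so you must first perform a linear change of variables preserving $\{x_n=0\}$ that normalizes the frozen coefficient matrix (e.g.\ to the identity), and only then reflect. Both issues are standard and repairable, but they should be stated, since they are precisely the technical content that makes this estimate nontrivial.
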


Applied to (\ref{special}) it gives, for any $\beta \in (0, 1)$ (we fix one),
\be\begin{aligned}
\|\dl(y-y_s)\|_{C^{0, \beta}(\Omega)} \leq &C(\|y-y_s\|_{\infty} + \|y_t\|_q + \|mu\|_q + \|(a'(z)-a'(y_s))\dl y_s\|_{C^{0, \beta}(\Omega)}) \\
\leq &C(\|y-y_s\|_{\infty} + \|y_t\|_q + \|mu\|_q + \|z-y_s\|_{C^{0, \beta}(\Omega)} \|\dl y_s\|_{\infty} + \\
& + \|z-y_s\|_{\infty} \|\dl y_s\|_{C^{0, \beta}(\Omega)}) \\
\leq & C(\|y_t\|_q + \rho + \|z-y_s\|_{C^{0, \beta}(\Omega)}),
\label{ciuc}
\end{aligned}\ee
where the constant depends on $\|a'(z)\|_{C^{\beta}(\Omega)} \leq C (1 + \|\dl (z-y_s)\|_{\infty})$, so is bounded whenever $\|\dl (z-y_s)\|_{\infty} \leq \zeta \leq 1$. Note that for $\beta_0 = \frac {n+2\beta}{n+2}$
\begin{equation}
\|z-y_s\|_{C^{0, \beta}(\Omega)} \leq C \|z-y_s\|_{W^{1, \infty}(\Omega)}^{\beta_0} \|z-y_s\|_2^{1-\beta_0} \leq C \zeta^{\beta_0} \rho^{1-\beta_0}.
\lb{interpol}
\end{equation}
We prove this interpolation inequality as follows: firstly, we extend functions defined on $\Omega$ continuously by zero to the whole of $\set R^n$. Then, we prove it for $L^{\infty}$, i.\ e.\ $\beta=0$:
\be\begin{aligned}
Cr^n |f(x_0)| \leq &\int_{|x-x_0| \leq r} f(x) \dd x + \int_{|x-x_0| \leq r} |f(x_0) - f(x)| \dd x \\
\leq &C (r^{n/2} \|f\|_2 + r^{n+1} \|f\|_{W^{1, \infty}(\Omega)}).
\end{aligned}
\ee
Setting $r=(\|f\|_2/\|f\|_{W^{1, 1}(\Omega)})^{2/(n-2)}$ we get
\be
\|f\|_{\infty} \leq C \|f\|_{W^{1, \infty}(\Omega)}^{n/(n-2)} \|f\|_2^{2/(n-2)}.
\ee
Now we can interpolate between $\beta=0$ and $1$ and retrieve (\ref{interpol}):
\be
|f(x_1) - f(x_2)|^{1/\beta} \leq C \|f\|_{\infty}^{(1-\beta)/\beta} \|f\|_{W^{1, \infty}} |x_1-x_2|.
\ee
Reintroducing time, since $\rho \leq 1$, we obtain from (\ref{ciuc}) that
\begin{equation}
\|\dl(y-y_s)\|_{L^{\infty}(Q)} \leq C (\|y_t\|_{L^{\infty}(0, T; L^q(\Omega))} + \rho^{1-\beta_0}).
\label{qed}
\end{equation}
By making $\rho$ sufficiently small we obtain the estimate concerning $\dl(y-y_s)$.
\end{proof}

%Corollary \ref{cornou} establishes that for any sufficiently small $\zeta$ there exists $\rho>0$ such that if $\|\Delta a(y_0) + f\|_q \leq \rho$, then the solution $Y$ of the uncontrolled equation (\ref{omogen}) satisfies $\|Y-y_s\|_{W^{1, \infty}(0, T; L^q(\Omega)) \cap L^{\infty}(0, T; W^{1, \infty}(\Omega))} \leq \zeta$.
\begin{proof}[Proof of Corollary \ref{cornou}]
In equation (\ref{y}) set $u=0$. Consider the mapping $G$ that associates to each $z$, in the set
\begin{equation}
K = \{z \in L^2(Q) \mid \|z-y_s\|_{W^{1, \infty}([0, T]; L^q(\Omega)) \cap L^{\infty}(0, T; W^{1, \infty}(\Omega))} \leq \zeta\},
\end{equation}
the corresponding solution $y \in K$ of equation (\ref{y}) that exists by Proposition \ref{propnoua} for sufficiently small $\rho$. $K$ is convex and compact in $L^2(Q)$. $G$ has a closed graph in $C(\ov Q)$ and by Schauder's Theorem has a fixed point. The fixed point is the unique solution $Y$ of equation (\ref{omogen}) and satisfies the desired estimate since $Y \in K$.
\end{proof}

%Proposition \ref{prop1} establishes that for each $\eta>0$ there exists $\rho>0$ such that, if $\|y_0-y_s\|_{W^{1, n}(\Omega)} \leq \rho$ and $T \leq \rho$, then the solution $Y$ of the uncontrolled equation (\ref{omogen}) satisfies $\|Y-y_s\|_{C([0, T]; W^{1, n}(\Omega))} \leq \eta$.

\begin{proof}[Proof of Proposition \ref{prop1}]
As justified above, one can assume that $Y \in C^{2+\alpha, 1+\alpha/2}(Q)$. Then, for the derivatives of $Y$ one has the equation
\begin{equation}
\begin{array}{l}
(D_i (Y - y_s))_t = D_i \div (a'(Y) \dl (Y-y_s)) + D_i \div (a'(Y) \dl y_s) + D_i f\\
D_i (Y - y_s) \mid_{t = 0} = D_i (y_0-y_s).
\end{array}
\label{deriv}
\end{equation}
The main problem is that there are no conditions on the lateral boundary $\Sigma$. In the course of this proof we denote $B=a'(Y)$ and $w=Y-y_s$ for brevity.
\begin{lemma}
The solution $Y$ of equation (\ref{omogen}) satisfies
\begin{equation}
\begin{array}{c}
\displaystyle{
\frac d {dt} (\sum_i \|D_i (Y(t)-y_s)\|_n^n) + c \int_{\Omega} {\sum_i |D_i (Y(t)-y_s)|^{n - 2} (\dl D_i (Y(t)-y_s))^2 dx} \leq }\\
\displaystyle{
\leq C (1 + \int_{\Omega} {\sum_i |D_i (Y(t)-y_s)|^{n + 2} dx}). }
\end{array}
\label{ineg}
\end{equation}
\label{semi1}
\end{lemma}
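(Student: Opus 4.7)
To prove Lemma \ref{semi1}, the plan is to multiply equation (\ref{deriv}) by $|D_i w|^{n-2} D_i w$ (writing $w = Y-y_s$ and $B = a'(Y)$), integrate over $\Omega$, and sum over $i$. The time derivative gives $\frac{1}{n} \frac{d}{dt} \sum_i \|D_i w(t)\|_n^n$. For the spatial principal part I would commute derivatives to write
\[
D_i \operatorname{div}(B \nabla w) = \operatorname{div}(B \nabla D_i w) + \operatorname{div}(D_i B \cdot \nabla w),
\]
so that the symmetric piece yields, after one integration by parts against the test function $|D_i w|^{n-2} D_i w$, the desired dissipation $-(n-1) \int_{\Omega} B \, |D_i w|^{n-2} |\nabla D_i w|^2 \, dx$; since $B \geq \mu > 0$, this produces the second term on the left of (\ref{ineg}).

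All remaining interior terms split into products of (i) a factor of $|D_i w|^{n-2} \nabla D_i w$ coming from the gradient of the test function, (ii) a factor of $D_i B = a''(Y)(D_i w + D_i y_s)$ or of $B-a'(y_s)$, bounded respectively by $C(1+|\nabla w|)$ or $C\|w\|_{\infty}$, and (iii) a factor involving $\nabla w$, $\nabla y_s \in L^{\infty}$, or $\nabla^2 y_s \in L^q$ with $q>n$. Cauchy--Schwarz and Young with a small parameter $\varepsilon$ absorb an $\varepsilon$-fraction into the dissipation, leaving integrals of the form $\int |D_i w|^{n-2} |\nabla w|^2 \, dx$ and $\int |D_i w|^{n-2} |\nabla w|^4 \, dx$. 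The first is controlled by $\|\nabla w\|_n^n$ via H\"older and hence by $1 + \int |\nabla w|^{n+2} \, dx$; the second is handled by H\"older with conjugate exponents $(n+2)/(n-2)$ and $(n+2)/4$ followed by Young, yielding a constant multiple of $\int |\nabla w|^{n+2} \, dx$. The contributions from $y_s$, which is in $W^{2,q}$ with $q>n$, are absorbed into the constant $1$ on the right-hand side. Summing over $i$ delivers exactly the bound (\ref{ineg}).

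The main obstacle is handling the boundary terms generated by integration by parts, since the test function $|D_i w|^{n-2} D_i w$ does \emph{not} vanish on $\partial\Omega$. Here I would exploit the Dirichlet condition: because $w = 0$ on $\partial\Omega$, the vector $\nabla w$ is parallel to the outward normal $\nu$ there, and because $Y = y_s = 0$ on $\partial\Omega$, $B - a'(y_s)$ also vanishes on the boundary. Together these facts give the boundary integrands enough algebraic structure --- in the same spirit as the favorable sign computation (\ref{y6.5}) in the Carleman estimate --- so that the surface contributions either have favorable sign or can be bounded via standard trace inequalities and absorbed into the dissipation term and the already-produced $C\bigl(1 + \int |\nabla w|^{n+2}\bigr)$. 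Once this boundary analysis is dispatched, combining all the pieces yields (\ref{ineg}); this boundary step is the delicate one, as the rest is a fairly mechanical $L^n$-energy estimate on the differentiated equation.
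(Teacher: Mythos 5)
Your interior computation matches the paper's: multiply (\ref{deriv}) by $|D_i w|^{n-2}D_i w$ with $w = Y-y_s$, $B=a'(Y)$, commute $D_i$ with $\div$, extract the dissipation term $(n-1)\int B|D_iw|^{n-2}|\nabla D_iw|^2$, and soak up the remaining bulk terms by Cauchy--Schwarz and Young. That part is fine and is exactly what the paper does.

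The gap is in the boundary analysis, which you correctly flag as delicate but then essentially hand-wave. After the first integration by parts you are left with the surface term
\begin{equation*}
-\int_{\partial\Omega}\sum_i |D_iw|^{n-2}D_iw\,B\,\nu\cdot D_i\nabla w\,d\sigma,
\end{equation*}
which involves \emph{second} derivatives of $w$ on $\partial\Omega$. Neither of the facts you invoke resolves this. That $\nabla w\parallel\nu$ controls only first derivatives; it rewrites $|D_iw|^{n-2}D_iw$ as $|\nu\cdot\nabla w|^{n-2}(\nu\cdot\nabla w)|\nu_i|^{n-2}\nu_i$, but leaves the factor $\nu\cdot D_i\nabla w$ untouched and of indeterminate sign (there is no analogue of the clean quadratic-form structure that produced the favorable sign in (\ref{y6.5})). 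And ``standard trace inequalities'' applied to this term would require control of third derivatives of $w$, which you do not have. The vanishing of $B-a'(y_s)$ on $\partial\Omega$ is true but irrelevant here, since the coefficient in the boundary term is $B$, not $B-a'(y_s)$.

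The actual mechanism in the paper has two ingredients you did not identify. First, since $w=Y-y_s=0$ on $\Sigma$ and hence $w_t=0$ there, the equation restricted to the boundary gives $\div(B\nabla w)=0$, i.e.\ $B\Delta w=-\nabla B\cdot\nabla w$ on $\partial\Omega$; the paper uses this to add $\nu_i B\Delta w$ inside the surface integral for free, forming the combination $\nu\cdot D_i\nabla w - \nu_i\Delta w$. Second, one flattens the boundary with local $C^2$ charts $S$ and a partition of unity; in the flattened variables $v=w\circ S$ the tangential first and second derivatives vanish ($v=0$ on $\{\tilde x_n=0\}$), and the normal-normal second derivatives cancel exactly inside the combination $\nu\cdot D_i\nabla w - \nu_i\Delta w$. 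What survives is first order in $\nabla w$ plus a tangential derivative that can be integrated by parts along the boundary, yielding the bound $\lesssim\int_{\partial\Omega}|\nabla w|^n(1+|\nabla w|+|\nabla B|)d\sigma$. Only \emph{then} do trace inequalities (for $W^{1,1}(\Omega)$ quantities built from $|\nabla w|^n$ and $|\nabla w|^{n+1}$) and Young bring the surface contribution under the dissipation plus $C(1+\int|\nabla w|^{n+2})$. Without this cancellation step your proof does not close.
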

\begin{proof}
By equation (\ref{deriv}),
\begin{equation}
\begin{aligned}
\frac d {dt} \|D_i w(t)\|_n^n = & n \int_{\Omega} |D_i w(t)|^{n - 2} D_i w(t) (D_i \div(B \dl w(t)) + \\
&+ D_i \div(B \dl y_s) + D_i f) dx \\
= & n \int_{\Omega} {|D_i w(t)|^{n - 2} D_i w(t) (D_i \div(B \dl w(t)) dx}\\
& - n(n-1) \int_{\Omega} {|D_i w(t)|^{n - 2} D^2_i w(t) (\div (B \dl y_s) + f) dx}.
\end{aligned}
\label{babel}
\end{equation}
Next we prove, following the same method as in \cite{ladijenskaia2}, Chapter II, Paragraph 6 (also see p. 100), that
\begin{equation}
\begin{aligned}
&-\int_{\Omega} {\sum_i |D_i w(t)|^{n - 2} D_i w(t) D_i \div(B \dl w(t)) dx} + \\
&+ C \Big(\int_{\Omega} {\sum_i |D_i w(t)|^{n - 2} |\dl D_i w(t)|  |\dl w(t)| |D_i B(t)| dx} + \\
&+ \int_{\partial \Omega} {|w(t)|^{n + 1} + |w(t)|^n + |B(t)|^{n + 1} d\sigma}\Big) \geq \\
& \geq (n - 1) \int_{\Omega} {\sum_i |D_i w(t)|^{n - 2} B(\dl D_i w(t))^2 dx}.
\end{aligned}
\label{c}
\end{equation}

Indeed, by Green's Theorem we obtain that
\begin{multline}
-\int_{\Omega} {\sum_i |D_i w(t)|^{n - 2} D_i w(t) D_i \div(B \dl w(t)) dx} = \\
\begin{aligned}
= & -\int_{\partial \Omega} \sum_i |D_i w(t)|^{n - 2} D_i w(t) \nu \cdot D_i (B \dl w(t)) d\sigma + \\
& + (n - 1) \int_{\Omega} \sum_i |D_i w(t)|^{n - 2} \dl D_i w(t) D_i (B \dl w(t)) dx \\
\geq & -\int_{\partial \Omega} {\sum_i |D_i w(t)|^{n - 2} D_i w(t) B \nu  \cdot \dl D_i w(t) d\sigma} + \\
& + (n- 1) \int_{\Omega} {\sum_i |D_i w(t)|^{n - 2} B(\dl D_i w(t))^2 dx} - \\
& - C (\int_{\Omega} {\sum_i |D_i w(t)|^{n - 2} |\dl D_i w(t)|  |\dl w(t)| |D_i B(t)| dx} + \\
& + \int_{\partial \Omega} {\sum_i |D_i w(t)|^{n + 1} + |D_i B(t)|^{n + 1} d\sigma}).
\end{aligned}
\label{c0}
\end{multline}

By introducing local coordinates on the boundary and using a partition of unity we obtain, following the intricate computations given below (very similar to the ones in \cite{ladijenskaia2}), that
\begin{multline}
-\int_{\partial \Omega} {\sum_i |D_i w(t)|^{n - 2} D_i w(t) B \nu \cdot D_i \dl w(t) d\sigma} \geq \\
\geq -C \int_{\partial \Omega} {\sum_i |D_i w(t)|^{n + 1} + |D_i w(t)|^n + |D_i B(t)|^{n + 1} d\sigma}.
\label{cultt}
\end{multline}
Let us prove (\ref{cultt}). Note that on $\Sigma$, since $y=y_s=0$, equation (\ref{omogen}) translates into $\div (B \dl w) = w_t = 0$. By multiplying $\div (B \dl w)$ with $|D_i w|^{n - 2} D_i w \nu_i$ and adding it to the left side of (\ref{cultt}), we obtain
\begin{multline}
-\int_{\partial \Omega} {\sum_i |D_i w(t)|^{n - 2} D_i w(t) B \nu \cdot D_i \dl w(t) d\sigma} \geq \\
\begin{aligned}
\geq & -\int_{\partial \Omega} {\sum_i |D_i w(t)|^{n - 2} D_i w(t) B (\nu \cdot D_i \dl w(t) - \nu_i \Delta w(t)) d\sigma} - \\
& - C \int_{\partial \Omega} {\sum_i |D_i w(t)|^{n + 1} + |D_i B(t)|^{n + 1} d\sigma}.
\end{aligned}
\label{unu}
\end{multline}
Then, because of the smoothness of $\Omega$, each point $x_0$ on the boundary has a neighborhood $V(x_0) \subset \set R^n$ on which we can define a $C^2$ coordinate map $x = S(\tilde x)$, with $S(0) = x_0$ and $S^{-1} (\partial \Omega \cap V(x_0) \subset \set R^{n - 1}$, where $\set R^{n - 1} = \{\tilde x | \tilde x_n = 0 \}$. One can also assume that $|\dl S(\tilde x)| = 1$ on $S^{-1} (\partial \Omega \cap V(x_0)$. Because $\Omega$ is bounded, we can define a smooth partition of unity $\chi = \sum_{j = 1}^J \chi_j$ such that $\chi \mid_{\Omega} = 1$, $\supp \chi_j \subset V(x_j)$ for $j = \overline{1, J - 1}$, and $\supp \chi_J \subset \subset \Omega$. Let us decompose the second $D_i w$ factor on the second line in (\ref{unu}) into a sum according to this partition. After applying Green's theorem, the term corresponding to $\chi_J w(t)$ cancels on the boundary. Putting $w_j = w(t) \chi_j$, $v_j = w_j \circ S$, $v = w(t) \circ S$, each of the other terms becomes by a change of coordinates
\begin{equation}
\begin{array}{c}
\displaystyle{
-\int_{\partial \Omega} {\sum_i |D_i w(t)|^{n - 2} D_i w(t) B (\nu \cdot D_i \dl w_j - \nu_i \Delta w_j) d\sigma} = }\\
\displaystyle{
= -\int_{\set R^{n - 1}} {\sum |S_{m, i} D_m v|^{n - 2} S_{p, i} D_p v B \cdot} }\\
\displaystyle{
{\cdot (S_{n, k} (S_{s, i} S_{r, k} D_s D_r v_j - S_{r, ik} D_r v_j) - S_{n, i} (S_{u, k} S_{l, k} D_u D_l v_j + (\Delta S_u) D_u v_j)) d\sigma}, }
\end{array}
\end{equation}
where the indices $i$, $k$, $l$, $m$, $p$, $s$, $r$, $u$, $v$, and $w$ are used for summation. Since $v = 0$ on $\set R^{n-1}$, we see that $D_i v = 0$ and $D_i D_j v = 0$ on $\set R^{n - 1}$, for $i, j = \overline{1, n - 1}$. In addition, when $s = u = n$ and $r = l$, the corresponding terms cancel. If we put $\sum_k (S_{n, i} S_{q, i} S_{n, k}^2 - S_{n, i}^2 S_{q, k} S_{n, k}) = A_{qi}$ and $(\sum_k S_{n, i} S_{n, k} S_{n, ik} - S_{n, i}^2 \Delta S_n) = A_i$, the previous integral becomes
\begin{equation}
-\int_{\set R^{n - 1}} {\sum |S_{n, i} D_n v|^{n - 2} D_n v(t) (A_{si} b D_q D_n v_j + A_i B D_n v_j) d\sigma}.
\end{equation}
Then, since $A_i$ are bounded and $v$, $v_j$ vanish at infinity,
\begin{equation}
\begin{array}{c}
\displaystyle{
-\int_{\set R^{n - 1}} {\sum |S_{n, i} D_n v|^{n - 2} D_n v A_i B D_n v_j d\sigma} \geq -C \int_{\partial \Omega} {|\dl w(t)|^q d\sigma}. }
\end{array}
\end{equation}
Since $\chi_j$ has compact support, we integrate by parts the term containing $A_{qi}$ and obtain
\begin{equation}
\begin{array}{c}
\displaystyle{
-\int_{\set R^{n - 1}} {\sum |S_{n, i} D_n v|^{n - 2} D_n v A_{si} B (\chi_j D_q D_n v + D_q \chi_j D_n v) d\sigma} \geq }\\
\displaystyle{
\geq \frac 1 n \int_{\set R^{n - 1}} {\sum |S_{n, i} D_n v|^{n-2} D_q (\chi_j A_{si} B) d\sigma} - C \int_{\set R^{n-1}} {|D_n v|^q d\sigma} }\\
\displaystyle{
\geq -C \int_{\partial \Omega} {|\dl w(t)|^n (1 + |\dl w(t)| + |\dl B|) d\sigma}, }
\end{array}
\label{cult}
\end{equation}
which concludes the proof of (\ref{cultt}). Combining inequalities (\ref{c0}) and (\ref{cultt}), inequality (\ref{c}) follows.

Due to the fact that $B \geq \mu > 0$, (\ref{c}) becomes
\begin{multline}
C \int_{\Omega} {\sum_i |D_i w(t)|^{n - 2} (\dl D_i w(t))^2 dx} \leq \\
\begin{aligned}
\leq & -\int_{\Omega} {\sum_i |D_i w(t)|^{n - 2} D_i w(t) D_i \div(B \dl w(t)) dx} + \\
& + C (\int_{\Omega} {|D_i w(t)|^{n - 2} |\dl D_i w(t)| |\dl w(t)| |D_i B(t)| dx} + \\
& + \int_{\partial \Omega} {\sum_i |D_i w(t)|^{n + 1} + |D_i w(t)|^n + |D_i B(t)|^{n + 1} d\sigma}).
\end{aligned}
\end{multline}
For any $\epsilon > 0$,
\begin{equation}
\begin{array}{c}
\displaystyle{
\int_{\Omega} {|D_i w(t)|^{n - 2} |\dl D_i w(t)| |\dl w(t)| |D_i B(t)| dx} \leq \epsilon \int_{\Omega} {\sum_i |D_i Y(t)|^{n - 2} (\dl D_i Y(t))^2 dx} + }\\
\displaystyle{
+ C \int_{\Omega} {\sum_i |D_i B(t)|^{n + 2} dx} + C(\epsilon) \int_{\Omega} {\sum_i |D_i Y(t)|^{n + 2} dx}. }
\end{array}
\end{equation}
Finally, putting to use the fact that $B=a'(Y)$,
\begin{multline}
\int_{\partial \Omega} {|D_i w(t)|^{n + 1} + |D_i w(t)|^n + |D_i B(t)|^{n + 1} d\sigma} \leq \\
\begin{aligned}
\leq & C (\||D_i w(t)|^{n+1}\|_{W^{1, 1}(\Omega)} + \||D_i w(t)|^q\|_{W^{1, 1}(\Omega)} + 1)\\
\leq &\epsilon \int_{\Omega} {\sum_i |D_i w(t)|^{n - 2} (\dl D_i w(t))^2 dx} + C(\epsilon) \int_{\Omega} {|D_i Y|^n + |D_i Y|^{n + 2} dx} + 1.
\end{aligned}
\end{multline}
The last term in (\ref{babel}) can be dealt with as follows:
\begin{equation}
\begin{array}{c}
\displaystyle{
\int_{\Omega} {|D_i w(t)|^{n - 2} D^2_i w(t) (\div (B \dl y_s) + f) dx} \leq \epsilon \int_{\Omega} {\sum_i |D_i w|^{n - 2} (\dl D_i w)^2 dx} + }\\
\displaystyle{
+ C(\epsilon) (\int_{\Omega} {\sum_i |D_i Y|^n dx} + 1). }
\end{array}
\end{equation}

We obtain the desired conclusion (\ref{ineg}).
\end{proof}

From (\ref{ineg}) one can further infer that
\begin{equation}
\begin{array}{c}
\displaystyle{
\frac d {dt} (\sum_i \|D_i (Y(t)-y_s)\|_n^n) + c \int_{\Omega} {\sum_i |D_i (Y(t)-y_s)|^{n - 2} (\dl D_i (Y(t)-y_s))^2 dx} \leq }\\
\displaystyle{
\leq C (1 + (\int_{\Omega} {\sum_i |D_i (Y(t)-y_s)|^{n - 2} (\dl D_i (Y(t)-y_s))^2 dx}) \sum_i \|D_i (Y(t)-y_s)\|_n^2). }
\end{array}
\label{inegal}
\end{equation}
Consider the interval $I=\{t \in [0, T] \mid \sum_i \|D_i (Y(s) - y_s)\|_n^n \leq \eta^n, \forall s \leq t\}$. Since $Y \in C([0, T]; L^2(\Omega))$ and the set $\{y \in L^2(\Omega) \mid y\mid_{\partial \Omega}=0, \sum_i \|D_i y\|_n^n \leq \eta^n\}$ is closed, $I$ is closed as well. Since $0\in I$ by hypothesis, $I$ is nonempty. Let us prove that if $t \in I$ and $t<T$, then $[t, t+\delta) \subset I$ for some $\delta>0$ and consequently $I=[0, T]$.

Since $Y \in C^{2+\alpha, 1+\alpha/2}(Q)$, the derivatives $D_i Y$ are continuous on $Q$. On a right neighborhood of $t$ $[t, t+\delta)$ one has that $\sum_i \|D_i (Y(s)-y_s)\|_n^n \leq 2 \eta^n$. If $\eta$ is sufficiently small, the last term in (\ref{inegal}) cancels and on the interval $[0, t+\delta)$ the inequality (\ref{inegal}) becomes $\sum_i \|D_i (Y(s)-y_s)\|_n^n) \leq \rho^q + C T$. By choosing $\rho$ and $T$ sufficiently small, one retrieves the desired statement, $\sum_i \|D_i (Y(s)-y_s)\|_n^n \leq \eta^n$, first on the neighborhood $[t, t+\delta)$ and then on the whole interval $[0, T]$.
\end{proof}

With the help of Proposition \ref{prop1} we prove a smoothing estimate concerning the solution of the uncontrolled equation (\ref{omogen}), namely Proposition \ref{prop2}.

%Proposition \ref{prop2} establishes that if $\|y_0-y_s\|_{W^{1, n}_0(\Omega)}$ is sufficiently small then the solution $Y$ of equation (\ref{omogen}) satisfies $\|t Y_t\|_{C([0, T]; L^q(\Omega))} \leq C_T \|y_0 - y_s\|_2^{\frac 2 q}$ and in particular $\|\Delta a(Y(T)) + f\|_q \leq C_T \|y_0 - y_s\|_2^{\frac 2 q}$.
\begin{proof}[Proof of Proposition \ref{prop2}]
Again, one can assume that $Y \in C^{2+\alpha, 1+\alpha/2}(Q)$. Differentiating equation (\ref{omogen}) with respect to time, we obtain that
\begin{equation}
Y_{tt} - \dl(b \dl Y_t) = \dl(Y_t a''(Y) \dl Y).
\end{equation}
Multiplying by $t$, one obtains that
\begin{equation}
(tY_t)_t - \dl(b \dl (tY_t)) = \dl(t Y_t a''(Y) \dl Y) + Y_t = \dl(t Y_t a''(Y) \dl Y) + \dl(a'(Y) \dl Y - a'(y_s) \dl y_s).
\end{equation}
Multiplying by $|tY_t|^{q-2} tY_t$ and integrating, one obtains that
\begin{multline}
\frac d {dt} \int_{\Omega} {|tY_t(t)|^q dx} + c \int_{\Omega} {|tY_t|^{q-2} (\dl(tY_t))^2 dx} \leq \\
\leq C (\int_{\Omega} {|tY_t|^{q-1} \dl(tY_t) a''(Y) \dl Y  dx} + \int_{\Omega} {|tY_t|^{q-2} \dl(tY_t) (a'(Y) \dl Y - a'(y_s) \dl y_s) dx}).
\end{multline}
Hence,
\begin{multline}
\frac d {dt} \int_{\Omega} {|tY_t(t)|^q dx} + c \int_{\Omega} {|tY_t|^{q-2} (\dl(tY_t))^2 dx} \leq \\
\leq C (\int_{\Omega} {|tY_t|^q (a''(Y) \dl Y)^2 dx} + \int_{\Omega} {|tY_t|^{q-2} ((\dl (Y - y_s))^2 + ((Y-y_s) \dl y_s)^2) dx}).
\end{multline}
Note the following two inequalities:
\begin{equation}
\int_{\Omega} {|tY_t|^q (a''(Y) \dl Y)^2 dx} \leq C (\|\dl y_s\|_{\infty}^2 \|t Y_t\|_q^q + \|Y - y_s\|_{W^{1, n}(\Omega)}^2 \int_{\Omega} {|tY_t|^{q-2} (\dl(tY_t))^2 dx})
\end{equation}
and
\begin{equation}
\int_{\Omega} {|tY_t|^{q-2} ((\dl (Y - y_s))^2 dx} \leq C \|Y-y_s\|_{W^{1, \frac {nq} {n+q-2}}(\Omega)}^2 \bigg(\int_{\Omega} {|tY_t|^{q-2} (\dl(tY_t))^2 dx}\bigg)^{(q-2)/q}.
\end{equation}
They lead, if $\|Y-y_s\|_{C([0, T]; W^{1, n}(\Omega))}$ is smaller than a constant independent of $T$, to
\begin{equation}
\|t Y_t\|_{C([0, T]; L^q(\Omega))} \leq C \|Y-y_s\|_{L^q(0, T; W^{1, \frac {nq} {n+q-2}}(\Omega))}.
\end{equation}
The assumption that $\|Y-y_s\|_{C([0, T]; W^{1, n}(\Omega))} \leq c$ is met by applying Proposition \ref{prop1}. Note further that
\begin{equation}
\|Y-y_s\|_{L^q(0, T; W^{1, \frac {nq} {n+q-2}}(\Omega))} \leq C \|Y-y_s\|_{L^2(0, T; W^{1, 2}(\Omega))}^{\frac 2 q} \|Y-y_s\|_{L^{\infty}(0, T; W^{1, n}(\Omega))}^{\frac {q-2} q}.
\end{equation}
Therefore, under the conditions of Proposition \ref{prop1}, one eventually obtains that
\begin{equation}
\|\Delta a(Y(T)) + f\|_q  = \|Y_t(T)\|_q \leq C \|Y-y_s\|_{L^2(0, T; W^{1, 2}(\Omega))}^{\frac 2 q} \leq C \|y_0 - y_s\|_2^{\frac 2 q}.
\end{equation}
The last inequality follows from writing equation (\ref{omogen}) in the form
\begin{equation}
(Y-y_s)_t - \div(a'(Y)\dl(Y-y_s)) = \div((a'(Y)-a'(y_s)) \dl y_s),
\end{equation}
multiplying it by $Y-y_s$, integrating in $t$, and applying Gronwall's inequality.
\end{proof}

\section{Proof of the Controlability Results}
\subsection{Proposition \ref{w2i}}

We first prove the local controllability of the quasilinear equation (\ref{dif}) under more restrictive conditions on the initial data $y_0$, namely $a(y_0) \in W^{2, q}(\Omega)$, and on $a$, namely $a'$, $a''$, and $1/a'$ bounded on $\set R$.

Proposition \ref{w2i}, will follow by a fixed point argument applied to the linearized equation (\ref{y}). We prove that the continuous mapping $F$ which assigns to each $z$ the minimizer $y$ of the quadratic form $Q_z$ defined by (\ref{F}) has a fixed point in the compact set $B \subset C(\ov Q)$ defined by (\ref{B}).

\begin{proof}[Proof of Proposition \ref{w2i}] For technical reasons we assume that $T \leq T_0$ is sufficiently small. This is not essential, since if the equation is controllable at time $T$ it is also controllable afterwards.

Firstly, assume that $z \in B$, as defined in (\ref{B}), and that $\zeta_i$ and $\zeta$, the constants that enter the definition of $B$, are all at most $1$.

Consider the following optimal control problem: minimize
\begin{equation}
\begin{aligned}
Q_z(y, u) = & \int_{Q_{\omega}} {e^{-2s \alpha} \phi^{-3} u^2 dx dt} + \\
& + \int_Q {  e^{-2s\alpha} (\chi_{[0, T/2]}(y(x, t) - Y(x, t))^2 + \chi_{[T/2, T]} (y(x, t) - y_s(x))^2) dx dt},
%\tag{\ref{F}}
\end{aligned}
\end{equation}
subject to (\ref{y}), with $z$ as the auxiliary function.
\begin{lemma}
There exists a pair $(y_1, u_1)$ for which $Q_z(y_1, u_1) < \infty$.
\label{lemma_54}
\end{lemma}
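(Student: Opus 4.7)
The plan is to exhibit an admissible pair $(y_1,u_1)$ by a duality argument based on Proposition~\ref{car}. I would first observe that, because $\alpha \to -\infty$ at $t=0$ and $t=T$, the weight $e^{-2s\alpha}$ grows exponentially at both endpoints, so finiteness of $Q_z(y_1,u_1)$ forces $u_1$, $y_1-Y$ on $[0,T/2]$, and $y_1-y_s$ on $[T/2,T]$ to decay faster than any polynomial at the respective endpoints. In particular $y_1(T)=y_s$ is forced, so the lemma is essentially a weighted null-controllability statement for the linearized equation (\ref{y}).

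I would then pick a smooth reference $\tilde y \in C^\infty(\overline Q)$ of the form $\tilde y = \theta Y + (1-\theta)y_s$, where $\theta \in C^\infty([0,T])$ equals $1$ near $t=0$ and $0$ near $t=T$ and is flat of infinite order at both endpoints. Then $\tilde y(0)=y_0$, $\tilde y(T)=y_s$, $\tilde y|_\Sigma = 0$, and $\tilde y-Y$ (resp.\ $\tilde y-y_s$) vanishes to infinite order at $t=0$ (resp.\ $t=T$), so those pieces of $Q_z$ are already finite for $\tilde y$. The defect $h := \tilde y_t - \div(a'(z)\nabla \tilde y) - f$ is smooth on $\overline Q$ and flat at both endpoints, hence $e^{s\alpha}h \in L^2(Q)$; the only obstruction is that $h$ is not supported in $\omega$, so $\tilde y$ alone is not realizable as a solution of (\ref{y}).

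I would repair this by adding a corrector. The dual of Proposition~\ref{car}, via the standard Fursikov--Imanuvilov variational argument, yields a pair $(w,v)$ with
\begin{equation*}
w_t - \div(a'(z)\nabla w) = -h + mv,\quad w(0)=w(T)=0,\quad w|_\Sigma = 0,
\end{equation*}
together with the weighted bound $\|e^{-s\alpha}\phi^{-3/2}v\|_{L^2(Q_\omega)} + \|e^{-s\alpha}w\|_{L^2(Q)} \leq C\|e^{s\alpha}h\|_{L^2(Q)}$. Setting $y_1 := \tilde y + w$ and $u_1 := v$ produces a pair satisfying (\ref{y}) with $y_1(0)=y_0$ and $y_1(T)=y_s$. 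A triangle inequality bounds every term of $Q_z(y_1,u_1)$ in terms of these finite weighted norms, proving the claim.

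The main obstacle is the weighted null-controllability step used as a black box in the third paragraph: concretely, one minimizes the strictly convex functional $J(p) := \tfrac{1}{2}\int_{Q_\omega} e^{2s\alpha}\phi^3 p^2\,dxdt + \int_Q p h\,dxdt$ over solutions $p$ of the backward dual (\ref{p}) with $g=0$, and coercivity of $J$ on the relevant weighted space is precisely the content of Proposition~\ref{car}. The optimality condition then produces $v = e^{2s\alpha}\phi^3 p^*|_\omega$ and the corresponding state $w$ with the claimed bound. Once this weighted controllability is available, the lemma follows by the superposition $(y_1,u_1) = (\tilde y + w, v)$ as above.
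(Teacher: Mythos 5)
Your construction takes a genuinely different route from the paper's: the paper performs the penalized minimization of $Q^{\epsilon}_z$ on a compact time subinterval $[\epsilon_1, T_1]\subset(0,T)$ with initial data $Y(\epsilon_1)$, where all the singular weights are uniformly bounded, and then extends to $[0,T]$; you instead split the state into a smooth interpolant $\tilde y$ and a corrector $w$ steered to zero by a localized control. The decomposition is attractive, but the weighted bound you assert for the corrector does not follow from Proposition~\ref{car} for your $h$. Coercivity of $J(p)=\tfrac{1}{2}\int_{Q_\omega}e^{2s\alpha}\phi^3 p^2\,dx\,dt+\int_Q p h\,dx\,dt$ on the observability space uses Cauchy--Schwarz and the Carleman estimate with $g=0$ in the form
\[
\Big|\int_Q p h\,dx\,dt\Big|\ \leq\ \|e^{-s\alpha}\phi^{-3/2}h\|_{L^2(Q)}\,\|e^{s\alpha}\phi^{3/2}p\|_{L^2(Q)}\ \leq\ C\,\|e^{-s\alpha}\phi^{-3/2}h\|_{L^2(Q)}\,\|e^{s\alpha}\phi^{3/2}p\|_{L^2(Q_\omega)},
\]
so the hypothesis you actually need is $e^{-s\alpha}\phi^{-3/2}h\in L^2(Q)$, not merely $e^{s\alpha}h\in L^2(Q)$. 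The sign of the exponent is reversed: $e^{-s\alpha}$ blows up like $e^{C/(t(T-t))}$ at both endpoints, so the former is a very strong decay requirement on $h$.

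Your $h$ does not satisfy it, and in particular is not ``flat'' at the endpoints as you claim. With $\tilde y=\theta Y+(1-\theta)y_s$ and $\theta\equiv1$ near $t=0$, one has $\tilde y=Y$ there, and since $Y$ solves the quasilinear equation (\ref{omogen}) with coefficient $a'(Y)$ rather than the $z$-frozen linear equation (\ref{y}), the defect is
\[
h\ =\ Y_t-\mathrm{div}(a'(z)\nabla Y)-f\ =\ \mathrm{div}\big((a'(Y)-a'(z))\nabla Y\big),
\]
which is generically nonzero. The membership $z\in B$ gives $z(0)=Y(0)=y_0$ and weighted $L^{q_i}$ control of $z-Y$, but it does not give superexponential pointwise decay of $\nabla(z-Y)$ as $t\to0$; one gets $h=O(t)$ at best, which cannot offset $e^{-s\alpha}$. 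The analogous computation near $t=T$ gives $h=\mathrm{div}((a'(y_s)-a'(z))\nabla y_s)\neq0$. Consequently the linear functional $p\mapsto\int_Q p h$ is unbounded on the observability space, $J$ is not coercive, and the corrector $(w,v)$ with the stated bound is not produced by this argument. To repair your route you would need $\tilde y$ chosen so that the defect with respect to (\ref{y}) itself (not (\ref{omogen})) decays faster than $e^{-C/(t(T-t))}$ near both endpoints; the affine interpolant between $Y$ and $y_s$ does not achieve this, which is exactly the obstacle the paper's subinterval construction is designed to sidestep.
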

\begin{proof}
Firstly, minimize the functional
\begin{equation}
Q^{\epsilon}_z(y, u) = \int_{Q_{\omega}} {e^{-2s \alpha} \phi^{-3} u^2 dx dt} + \frac 1 {\epsilon} \int_{\Omega} {(y(x, T) - y_s(x))^2 dx}
\end{equation}
subject to (\ref{y}), with $z$ as the auxiliary function.

Clearly $Q^{\epsilon}_z$ is finite for some $(y, u)$ (one only needs to choose a suitable $u$ and solve (\ref{y}) for $y$) and is also strictly convex and lower semicontinuous, so it achieves its minimum. By an argument that will be repeated below, if $(\ye, \ue)$ is the minimizing pair, Pontryagin's principle yields that $\ue = m \pe e^{2s\alpha} \phi^3$, where $\pe$ is a solution to
\begin{equation}
\begin{array}{l}
(\pe)_t + \div(b \dl \pe) = 0 \mbox{ on } Q_T\\
\pe \mid_\Sigma = 0,\ \pe(T) = -\frac 1 {\epsilon} \ye.
\end{array}
\label{dual1}
\end{equation}
Multiplying (\ref{y}) by $\pe$, (\ref{dual1}) by $\ye - y_s$, and adding them together, one obtains
\begin{equation}
\int_{Q_{\omega}} {e^{2s \alpha} \phi^3 \pe^2 dx dt} + \frac 1 {\epsilon} \int_{\Omega} {(\ye(x, T) - y_s(x))^2 dx} = -\int_{\Omega} {y_0 p(0) dx}
\end{equation}
and, taking into account Corollary \ref{cor_p0},
\begin{equation}
\int_{Q_{\omega}} {e^{2s \alpha} \phi^3 \pe^2 dx dt} + \frac 1 {\epsilon} \int_{\Omega} {(\ye(T) - y_s)^2 dx} \leq C_{s, \lambda} (1 + \zeta) \int_{\Omega} {y_0^2 dx}.
\label{majorare1}
\end{equation}
By making $\epsilon$ go to $0$ one obtains in the limit a pair $(y, u)$ satisfying (\ref{y}) and such that $y(T) = 0$.

Performing this procedure on a subinterval $[\epsilon_1, T_1] \subset (0, T)$ instead of $[0, T]$, with the initial data $y(\epsilon_1) = Y(\epsilon_1)$, one obtains a pair $(y_1, u_1)$ with $y_1(T_1) = 0$. One can extend $y_1$ by $Y$ on the interval $[0, \epsilon_1)$ and by $y_s$ on the interval $(T_1, T]$ and extend $u_1$ by $0$ on both intervals, making $y_1$ a solution of (\ref{y}), for the control $u_1$, on the whole of $[0, T]$. By an immediate computation $Q_z(y_1, u_1) < \infty$.
\end{proof}

Since $Q_z$ is a strictly convex, lower semicontinuous functional, which is finite for the pair $(y_1, u_1)$ introduced in Lemma \ref{lemma_54}, it has a unique minimizing pair $(y, u)$. By the Pontryagin principle we obtain that $u = m p e^{2s \alpha} s^3 \lambda^3 \phi^3$, where $p$ is a solution to
\begin{equation}
\begin{array}{l}
p_t + \div(b \dl p) = e^{-2s\alpha} ((y-Y) \chi_{[0, T/2]} + (y - y_s) \chi_{[T/2, T]}) \mbox{ on } Q\\
p \mid_\Sigma = 0,\ p(T) = 0.
\end{array}
%\tag{\ref{dual}}
\end{equation}

Substracting (\ref{omogen}) from (\ref{y}) and multiplying it by $p$, multiplying (\ref{dual}) by $y - Y$, adding them together, and integrating on $[0, T/2]$, we obtain
\begin{multline}
\int_0^{T/2} \int_{\omega} {e^{2s \alpha} s^3 \lambda^3 \phi^3 p^2 dx dt} + \int_0^{T/2} \int_{\Omega} {e^{-2s\alpha} (y(x, t) - Y(x, t))^2 dx dt} = \\
= \int_{\Omega} {(y(T/2)-Y(T/2)) p(T/2) dx} + \int_0^{T/2} \int_{\Omega} {(a'(z) - a'(Y)) \dl Y \dl p dx dt}.
\label{part1}
\end{multline}
Multiplying (\ref{y}) by $p$, (\ref{dual}) by $y-y_s$, adding them together, and integrating on $[T/2, T]$, we get
\begin{multline}
\int_{T/2}^T \int_{\omega} {e^{2s \alpha} s^3 \lambda^3 \phi^3 p^2 dx dt} + \int_{T/2}^T \int_{\Omega} {e^{-2s\alpha} (y(x, t) - y_s(x))^2 dx dt} = \\
= -\int_{\Omega} {(y(T/2)-y_s) p(T/2) dx} + \int_{T/2}^T \int_{\Omega} {(a'(z) - a'(y_s)) \dl y_s \dl p dx dt}.
\label{part2}
\end{multline}
Adding (\ref{part1}) and (\ref{part2}) together gives
\begin{multline}
\int_{Q_{\omega}} {e^{2s \alpha} s^3 \lambda^3 \phi^3 p^2 dx dt} + \int_Q {e^{-2s\alpha} (\chi_{[0, T/2]} (y-Y)^2 + \chi_{[T/2, T]} (y-y_s)^2) dx dt} = \\
= \int_{\Omega}{(y_s-Y(T/2)) p(T/2) dx} + \\
+ \int_Q {(\chi_{[0, T/2]} (a'(z) - a'(Y)) \dl Y + \chi_{[T/2, T]} (a'(z) - a'(y_s)) \dl y_s) \dl p dx dt}.
\label{majorare2}
\end{multline}

In equation (\ref{dual}) the conditions for Carleman's inequality, Proposition \ref{car}, are met uniformly, since 
\begin{equation}
\|b\|_{L^{\infty}(0, T; W^{1, \infty}(\Omega)) \cap W^{1, \infty}(0, T; L^n(\Omega))} \leq C\|z\|_{L^{\infty}(0, T; W^{1, \infty}(\Omega)) \cap W^{1, \infty}(0, T; L^n(\Omega))} \leq C \zeta.
\end{equation}
With the help of the Carleman inequality, Corollary \ref{cor_p0}, and Corollary \ref{cornou}, we obtain that
\begin{multline}
Q_z(y, u) \leq \|y_s-Y(T/2)\|_2 \|p(T/2)\|_2 + C(s\lambda)^{-1/2} \|\phi^{-1/2}\|_{\infty} \zeta_0 \cdot \\
\cdot \bigg((1+\zeta)\int_{Q_{\omega}} {e^{2s \alpha} s^3 \lambda^3 \phi^3 p^2 dx dt} + \int_Q {e^{-2s\alpha} ((y-Y)^2 \chi_{[0, T/2]} + (y-y_s)^2 \chi_{[T/2, T]}) dx dt}\bigg)^{1/2}
\end{multline}
and therefore, since $\zeta \leq 1$,
\begin{multline}
\int_Q {e^{2s \alpha} s^3 \lambda^3 \phi^3 p^2 dx dt} + \int_Q {e^{-2s\alpha} (\chi_{[0, T/2]} (y-Y)^2 + \chi_{[T/2, T]} (y-y_s)^2) dx dt} \leq \\
\leq C_{s, \lambda} \|y_0-y_s\|_2^2 + C (s\lambda)^{-1} \|\phi^{-1}\|_{\infty} \zeta_0^2.
\label{majorare}
\end{multline}
If in (\ref{majorare}) $s \geq s_1(\lambda) = \max(s_0(\lambda), 2C\lambda^{-1}\|\phi^{-1}\|_{\infty})$ and if $\|y_0-y_s\|_2$ is sufficiently small, $\|y_0-y_s\|_2 \leq C_{s, \lambda} \zeta_0$, then $F$ takes values in the set
\begin{equation}
B_0(\zeta_0) = \{y \in L^2(Q) \mid \|e^{-s\alpha} \chi_{[0, T/2]} (y-Y)\|_{L^2(Q)} + \|e^{-s\alpha} \chi_{[T/2, T]} (y-y_s)\|_{L^2(Q)} \leq \zeta_0\},
\end{equation}
regardless of the size of $\zeta_0$.

Up to this point, constants' dependence on $s$ and $\lambda$ was not assumed and was made explicit wherever it occurred. Throughout the sequel constants will depend on $s$ and $\lambda$ by default.

In the following we prove that $y = F(z) \in B$.

Rewrite equation (\ref{y}) as
\begin{equation}
(y-y_s)_t - \dl (b \dl(y-y_s)) = mu + \dl ((a'(z)-a'(y_s)) \dl y_s),
\label{yy}
\end{equation}
where $b=a'(z)$.
Also consider the equation
\begin{equation}
(y-Y)_t - \dl (b \dl(y-Y)) = mu + \dl ((a'(z)-a'(Y)) \dl Y).
\label{yyy}
\end{equation}
Set $e^{-s \alpha}\phi_0^{-i}(y-Y) = V_i$, $e^{-s \alpha}\phi_0^{-i}(y-y_s) = W_i$, $e^{s \alpha}\phi_0^{-i+1}p = P_i$. 
Equations (\ref{yy}), (\ref{yyy}), and (\ref{dual}) turn into
\begin{equation}
\begin{aligned}
&(V_i)_t - div(b \dl V_i) + 2s \lambda \phi b \dl \psi \cdot \dl V_i + (s \alpha_t + \phi_0^{-i} (\phi_0^i)_t V_i +\\
&+ (s^2 \lambda^2 \phi^2 + s \lambda^2 \phi) b (\dl \psi)^2 V_i + s \lambda \phi b \Delta \psi V_i + s \lambda \phi b (\dl \psi \cdot \dl b) V_i = \\
&= m s^3 \lambda^3 \phi^3 \phi_0^{-1} P_i + e^{-s\alpha} \phi_0^{-i} \dl ((a'(z) - a'(Y)) \dl Y)\\
&V_i(0) = 0,\ V_i\mid_{\Sigma} = 0,
\end{aligned}
\label{YY}
\end{equation}
\begin{equation}
\begin{aligned}
&(W_i)_t - div(b \dl W_i) + 2s \lambda \phi b \dl \psi \cdot \dl W_i + (s \alpha_t + \phi_0^{-i} (\phi_0^i)_t W_i +\\
&+ (s^2 \lambda^2 \phi^2 + s \lambda^2 \phi) b (\dl \psi)^2 W_i + s \lambda \phi b \Delta \psi W_i + s \lambda \phi b (\dl \psi \cdot \dl b) W_i = \\
&= m s^3 \lambda^3 \phi^3 \phi_0^{-1} P_i + e^{-s\alpha} \phi_0^{-i} \dl ((a'(z) - a'(y_s)) \dl y_s)\\
&W_i(T/2) = V_i(T/2) + e^{s\alpha} \phi_0^i (Y(T/2) - y_s),\ W_i\mid_{\Sigma} = 0,
\end{aligned}
\label{Y}
\end{equation}
and
\begin{equation}
\begin{aligned}
&(P_i)_t + div(b \dl P_i) - 2s \lambda \phi b \dl \psi \cdot \dl P_i - (s \alpha_t - \phi_0^{1-i} (\phi_0^{i-1})_t) P_i +\\
&+ (s^2 \lambda^2 \phi^2 - s \lambda^2 \phi) b (\dl \psi)^2 P_i -\\
&- s \lambda \phi b \Delta \psi P_i - s \lambda \phi b (\dl \psi \cdot \dl b) P_i = -\phi_0 (V_i \chi_{[0, T/2]} + W_i \chi_{[T/2, T]})\\
&P_i(0) = P_i(T) = 0\ \text{on}\ \Omega,\ P_i\mid_{\Sigma} = 0.
\end{aligned}
\label{P}
\end{equation}

Recall that by (\ref{def_q}) one has that $q_0=2$, $\ds q_i = 2 (\frac {n}{n-2})^{i-1}$ for $1 \leq i \leq N-1$, and $q_N = q >n$. Assuming that $P_{i-1}$, $V_{i-1} \chi_{[0, T/2]}$, and $W_{i-1} \chi_{[T/2, T]}$ satisfy the bound
\begin{multline}
c\|P_{i-1}\|_{L^{q_{i-1}}(0, T; L^{q_i}(\Omega))} + \|V_{i-1}\|_{L^{q_{i-1}}(0, T/2; L^{q_i}(\Omega))} + \|W_{i-1}\|_{L^{q_{i-1}}(T/2, T; L^{q_i}(\Omega))} \leq \\
\leq C \|y_0-y_s\|_{q_{i-1}} + \frac 1 2 \zeta_{i-1},
\label{boundd}
\end{multline}
we prove that the same holds with $i$ replacing $i-1$. By induction the inequality will hold for all $i = \overline{1, N}$. This implies that $F$ takes values in $B_i(\zeta_i)$ for each $i$, for a judicious choice of $\zeta_i$.

The induction hypothesis is fulfilled since when $i=1$
\begin{multline}
c \|e^{s\alpha} \phi_0 p\|_{L^2(Q)} + \|e^{-s\alpha} (y-Y)\|_{L^2([0, T/2] \times \Omega)} + \|e^{-s\alpha} (y-y_s)\|_{L^2([T/2, T] \times \Omega)} \leq \\
\leq C \|y_0-y_s\|_2 + \frac 1 2 \zeta_0
\end{multline}
follows from (\ref{majorare}).
Multiplying equation (\ref{P}) by $|P_i|^{q_i-2}P_i$, integrating in $t$, and following an integration by parts to get rid of the term containing $\dl P_i$, one obtains the usual estimate
\begin{multline}
\sup_{t\in[0, T]}\|P_i(t)\|_{q_i}^{q_i} + \int_0^T \int_{\Omega} {|P_i|^{q_i-2} (\dl P_i)^2 dx dt} \leq \\
\leq C \int_Q {|P_i|^{q_i - 1} (|P_i| \phi_0^2 + (|V_i| \chi_{[0, T/2]} + |W_i| \chi_{[T/2, T]}) \phi_0) dx dt}
\end{multline}
and therefore
\begin{equation}
\leq C (\|P_{i-1}\|_{L^2(0, T; L^{q_i}(\Omega))}^{q_i} + \|V_{i-1}\|_{L^1(0, T/2; L^{q_i}(\Omega))}^{q_i}) + \|W_{i-1}\|_{L^1(T/2, T; L^{q_i}(\Omega))}^{q_i}).
\label{calcul}
\end{equation}
Indeed, since $P_i = \phi_0^{-1} P_{i-1}$, all the minor terms containing $P_i$ are majorized by the norm of $P_{i-1}$, and the same goes for $V_i$ and $W_i$.
%Since $q_i = \frac{n+2}{n+1}q_{i-1}$, the above embedding is compact. That is, if $P_{i-1}$ and $Y_{i-1} \chi_{[T/2, T]}$ are chosen from a bounded set in $L^{q_{i-1}}(Q)$, then $P_i$ form a precompact set in $L^{q_i}$.%$C([0, T]; L^{q_{i-1}}(\Omega)) \cap L^{q_{i-1}}(0, T;  \subset L^{q_i}(\Omega)$ is compact.

The same holds for $V_i$ and $W_i$, with a few minor differences. Firstly, for $V_i$ one obtains that
\begin{multline}
\sup_{t\in[0, T/2]}\|V_i(t)\|_{q_i}^{q_i} + \int_0^{T/2} \int_{\Omega} {|V_i|^{q_i-2} (\dl V_i)^2 dx dt} \leq \\
\begin{aligned}
\leq C \int_{[0, T/2]\times\Omega} & |V_i|^{q_i-2} (|V_i|^2 \phi^2 + |V_i| |P_i| \phi^2 + |\dl V_i| |z-Y| e^{-s \alpha} \phi_0^{-i} + \\
& + |V_i| |z-Y| e^{-s\alpha} \phi \phi_0^{-i}) dx dt
\end{aligned}
\end{multline}
and therefore
\begin{equation}
\leq C (\|P_{i-1}\|_{L^2(0, T; L^{q_i}(\Omega))}^{q_i} + \|V_{i-1}\|_{L^2(0, T/2; L^{q_i}(\Omega))}^{q_i}) + \|(z-Y)e^{-s\alpha}\phi_0^{1-i}\|_{L^2(0, T/2; L^{q_i}(\Omega))}^{q_i})
\end{equation}

For equation (\ref{Y}) the initial condition at time $T/2$ yields
\begin{equation}
\|W_i(T/2)\|_{q_i} \leq \|V_i(T/2)\|_{q_i} + C \|y_0-y_s\|_{q_i}.
\end{equation}
Then, multiplying equation (\ref{Y}) by $|W_i|^{q_i-2}W_i$, integrating in $t$, and performing two integrations by parts, one obtains that
\begin{multline}
\sup_{t\in[T/2, T]}\|W_i(t)\|_{q_i}^{q_i} + c \int_{T/2}^T \int_{\Omega} {|W_i|^{q_i-2} (\dl Y_i)^2 dx dt} \leq \\
\begin{aligned}
\leq & \|W_i(T/2)\|_{q_i}^{q_i} + C \int_{[0, T/2]\times\Omega} |W_i|^{q_i-2} (|W_i|^2 \phi^2 + |W_i| |P_i| \phi^2 + \\
& + |\dl W_i| |z-y_s| e^{-s \alpha} \phi_0^{-i} + |W_i| |z-y_s| e^{-s\alpha} \phi \phi_0^{-i}) dx dt
\end{aligned}
\end{multline}
and therefore
\begin{multline}
\leq C (\|W_i(T/2)\|_{q_i}^{q_i} + \|P_{i-1}\|_{L^2(0, T; L^{q_i}(\Omega))}^{q_i} + \|W_{i-1}\|_{L^2(T/2, T; L^{q_i}(\Omega))}^{q_i}) + \\
+ \|(z-y_s)e^{-s\alpha}\phi_0^{1-i}\|_{L^2(T/2, T; L^{q_i}(\Omega))}^{q_i}).
\label{uciuciu}
\end{multline}
The conclusion so far is that
\begin{multline}
\|P_i\|_{L^{q_i}(0, T; L^{q_{i+1}}(\Omega))} + \|V_i\|_{L^{q_i}(0, T/2; L^{q_{i+1}}(\Omega))} + \|W_i\|_{L^{q_i}(T/2, T; L^{q_{i+1}}(\Omega))} \leq \\
\leq C (\|y_0-y_s\|_{q_i} + \zeta_{i-1}).
\end{multline}
Letting $\zeta_i = 2C \zeta_{i-1}$, the induction is complete.

Let $\ds N = \left\lceil\frac {\log q - \log 2}{\log{n} - \log{(n-2)}}\right\rceil+1$. After $N$ induction steps one eventually has that
\begin{equation}
\|P_N\|_{L^{\infty}(0, T; L^q(\Omega))} + \|V_N\|_{L^{\infty}(0, T/2; L^q(\Omega))} + \|W_N\|_{L^{\infty}(T/2, T; L^q(\Omega))} \leq C(\|y_0 - y_s\|_q + \zeta_0).
\end{equation}
This method cannot lead to an $L^{\infty}$ bound, so the last step must be different. Denote $P = e^{(s+\delta)\alpha_0} \phi_0^3 p$, where $\delta>0$. Equation \ref{dual} becomes
\begin{equation}
\begin{array}{l}
P_t - \dl(b\dl P) = (e^{(s+\delta)\alpha_0} \phi_0^3)_t p + e^{(s+\delta)\alpha_0-2s\alpha} \phi_0^3 (y-Y) \chi_{[0, T/2]} + (y-y_s) \chi_{[T/2, T]}\\
P(T) = 0, P \mid_{\Sigma}=0.
\end{array}
\end{equation}
Since $\alpha_0 \leq \alpha$, one has that $\|P_t - \dl(b \dl P)\|_{L^q(Q)} \leq C(\|y_0 - y_s\|_q + \zeta_0)$. By parabolic regularity (Theorem 9.1, Chapter IV, of \cite{ladijenskaia}), one obtains that
\be
\|P\|_{W^{2, 1}_q(Q)} \leq C(\|y_0 - y_s\|_q + \zeta_0).
\ee
The constant depends only on the modulus of continuity of $b=a'(z)$, which is bounded. In fact, the constant only depends on $\zeta$, because
\begin{equation}
|(z-y_s)(x_1, t_1) - (z-y_s)(x_2, t_2)| \leq C \zeta (|x_1 - x_2| + |t_1 - t_2|^{\frac 1 2}).
\end{equation}
Since $a'$ has the Lipschitz property and $y_s$ is fixed and Lipschitz continuous, the modulus of continuity is uniformly bounded under the already made assumption that $\zeta \leq 1$.

Since
\begin{equation}
u = m p e^{2s \alpha} s^3 \lambda^3 \phi^3 = m P e^{2s \alpha - (s + \delta) \alpha_0} s^3 \lambda^3 \phi^3 \phi_0^{-3},
\end{equation}
we get that $|u| \leq C e^{(s (1 - 2\eta(\lambda)) - \delta) \alpha_0} |P|$ and $|u_t| \leq C e^{(s (1 - 2\eta(\lambda)) - \delta) \alpha_0} \allowbreak \phi_0 (|P| + |P_t|)$. Note that $W^{2, 1}_q(Q) \subset L^{\infty}(Q)$ is a compact embedding. Finally, we obtain that
\begin{equation}
\|e^{-(s (1 - 2\eta(\lambda)) - \delta) \alpha_0} u\|_{L^{\infty}(Q)} + \|e^{-(s (1 - 2\eta(\lambda)) - \delta) \alpha_0} \phi_0^{-1} u_t\|_{L^q(Q)} \leq C(\|y_0 - y_s\|_q + \zeta_0).
\label{u}
\end{equation}

This leads to the desired $L^{\infty}(0, T; W^{1, \infty}(\Omega)) \cap W^{1, \infty}(0, T; L^q(\Omega))$ bound concerning $y$. Indeed, Proposition \ref{propnoua} yields that if $\|\Delta a(y_0) + f\|_q$, $\|y_0-y_s\|_q$, and $\zeta_0$ are sufficiently small, if $\|z-y_s\|_{W^{1, \infty}(0, T; L^q(\Omega)) \cap L^{\infty}(0, T; W^{1, \infty}(\Omega)} \leq \zeta$, and if $T \leq T_0$ as assumed initially, then $\|y-y_s\|_{W^{1, \infty}(0, T; L^q(\Omega)) \cap L^{\infty}(0, T; W^{1, \infty}(\Omega)} \leq \zeta$ as well.

Thus, $y$ belongs to the compact set $B \subset C(\ov Q)$ and therefore the mapping $F:~B \to B$ is compact. It remains to prove that $F$ has a closed graph. Given a sequence $z_n \in B$ with $F(z_n)=y_n$, consider the corresponding controls $u_n$ and solutions $p_n$ of equation (\ref{dual}), linked by
\be
u_n = m p_n e^{2s\alpha} s^3 \lambda^3 \phi^3.
\ee
Assume that $z_n$ converge to $z$ and $y_n$ converge to $y$ in $C(\ov Q)$. By (\ref{majorare}),
\be
e^{-s\alpha} (s \lambda \phi)^{-3/2} u_n = m p_n e^{s\alpha} (s \lambda \phi)^{3/2}
\ee
are uniformly bounded in $L^2(Q)$. Therefore, after taking a subsequence, $u_n$ and $p_n$ converge weakly to some limits $u$ and $p$ that also fulfill the relation
\be
u = m p e^{2s\alpha} s^3 \lambda^3 \phi^3.
\ee
Since $y_n$ and $p_n$ satisfy equations (\ref{y}) and (\ref{dual}) with $z_n$ as coefficients, the same is true about their limits $y$, $p$, and $z$. Thus, $y$, $p$, and $z$ satisfy in a weak sense the relations
\begin{equation}
\begin{array}{l}
y_t-\div(b \dl y) = m p e^{2s\alpha} s^3 \lambda^3 \phi^3 + f,\ y(0) = y_0,\ y \mid_{\Sigma} = 0\\
p_t + \div(b \dl p) = e^{-2s\alpha} ((y-Y) \chi_{[0, T/2]} + (y - y_s) \chi_{[T/2, T]}),\ p(T) = 0, p \mid_{\Sigma} = 0.
\end{array}
\label{rel}
\end{equation}

Consider $F(z) = y_e$ with the corresponding control $u_e$. By Pontryagin's principle there exists $p_e$ such that $y_e$, $p_e$, and $z$ also satisfy the relations (\ref{rel}). Substracting the two sets of relations from one another and denoting $dy = y-y_e$, $dp = p-p_e$, one obtains that
\begin{equation}
\begin{array}{l}
dy_t-\div(b \dl dy) = m dp e^{2s\alpha} s^3 \lambda^3 \phi^3,\ dy(0) = 0,\ dy \mid_{\Sigma} = 0\\
dp_t + \div(b \dl dp) = e^{-2s\alpha} dy,\ dp(T) = 0, dp \mid_{\Sigma} = 0.
\end{array}
\end{equation}
Multiplying the first equation by $dp$, the second by $dy$, adding them together, and integrating on $[0, T]$ one has that
\begin{equation}
\int_{Q_{\omega}} {(dp)^2 e^{2s\alpha} s^3 \lambda^3 \phi^3 dx dt} + \int_Q {e^{-2s\alpha} (dy)^2 dx dt} = 0.
\end{equation}
Therefore $dy \equiv 0$ and $y=y_e=F(z)$, thus finishing the proof of the fact that $F$ has a closed graph.

Since $F$ is compact and has a closed graph, it is continuous. The proof has already shown that $F$ takes $B$ to itself. The fixed point obtained by applying Schauder's Theorem is the required controlled solution. Indeed, by the definition (\ref{B}) of $B$ it already follows that $y(T)=0$.

By making $\zeta_0$, $\zeta$, and $\|y_0-y_s\|_q + \|\Delta a(y_0) + f\|_q$ sufficiently small, one obtains that all of $\zeta_i$ and $\zeta$ are at most $1$, thus fulfilling the condition imposed at the beginning of the proof.

%Denote by $H$ the mapping that associates to each initial value $y_0$ the pair $(y, u)$ made of the controlled solution $y$ and the control $u$ obtained in the proof. In a similar manner to the proof for $F$, one obtains that $H$ has a closed graph. Since $y \in W^{1, \infty}(0, T; L^q(\Omega)) \cap L^{\infty}(0, T; W^{1, \infty}(\Omega))$ and $u = mpe^{2s\alpha} s^3 \lambda^3 \phi^3$ where $e^{(s+\delta)\alpha_0} \phi_0^3 p \in W^{2, 1}_q(\Omega)$, $H$ takes values in a compact set in $C(\ov Q) \times C(\ov Q)$. Hence it is continuous from a neighborhood of $y_s$ to $C(\ov Q) \times C(\ov Q)$.

The estimates (\ref{est1}) and (\ref{est2}) concerning $y$ and $u$ contained in the statement of Proposition \ref{w2i} follow from those already obtained in the course of the proof, if one imposes the supplementary condition that $\zeta_i \leq \|y_0-y_s\|_{q_i}$, $\zeta \leq \|\Delta a(y_0) + f\|_q$. Note that, since we have already assumed that $\|\Delta a(y) + f\|_q$ is bounded, all the weaker norms of $y_0-y_s$ can actually be controlled by $\|y_0-y_s\|_2$.
\end{proof}

\subsection{Theorem \ref{2nd}}

Proposition \ref{w2i} establishes local controllability only for the case when $a(y_0) \in W^{2, q}(\Omega)$. By combining it with the regularity results contained in Propositions \ref{prop1} and \ref{prop2}, now we prove local controllability for the more general case of $y_0 \in W^{1, n}_0(\Omega)$, still under the assumption that $a'$, $a''$, and $1/a'$ are bounded on $\set R$. Then we weaken this assumption as well.
\begin{proof}[Proof of Theorem \ref{2nd}]
In the course of the proof we assume that $T$ is sufficiently small, but this assumption can be removed since if the equation is controllable at time $T$ it is also controllable for any greater time.

Divide the interval $[0, T]$ into two subintervals, $[0, T_0]$ and $[T_0, T]$, of sufficiently small length. On the first interval, take the control $u$ to be $0$. Propositions \ref{Yqq}, \ref{prop1}, and \ref{prop2} yield that the solution of equation
\begin{equation}
\begin{array}{ll}
Y_t - \Delta a(Y) = f & \mbox{on }Q_{T_0} = \Omega \times (0, T)\\
Y \mid_{\Sigma} = 0, & y_0 = y_0
\end{array}
\label{omogena}
%\tag{\ref{omogen}}
\end{equation}
satisfies the estimate
\begin{equation}
\|Y(T_0) - y_s\|_q + \|\Delta a(Y(T_0)) + f\|_q \leq C(T_0, y_s, a) \|y_0 - y_s\|_2^{2/q}
\label{ecu1}
\end{equation}
provided that $\|y_0 - y_s\|_{W^{1, n}_0(\Omega)}$ is sufficiently small.
Then, by applying Proposition \ref{w2i} on the second interval $[T_0, T]$, if $\|y_0-y_s\|_2$ is sufficiently small, one obtains a controlled solution $y^*$ and a control $u^*$ on the interval $[T_0, T]$. By Proposition \ref{w2i}, $y^*$ and $u^*$ satisfy the required inequalities (\ref{est1}) and (\ref{est2}) on the interval $[T_0, T]$. Due to the rapid decrease of $u^*$ and $y^*$ near $T_0$ and the regularity of these functions, $y$ and $u$, where
\begin{equation}
u = \left\{ \begin{array}{ll}
0 & \text{on } [0, T_0] \times \Omega\\
u^* & \text{on } (T_0, T) \times \Omega
\end{array} \right.
\end{equation}
and
\begin{equation}
y = \left\{ \begin{array}{ll}
Y & \text{on } [0, T_0] \times \Omega\\
y^* & \text{on } (T_0, T) \times \Omega,
\end{array} \right.
\end{equation}
are a controlled solution and a control for (\ref{dif}) on $[0, T]$ respectively.

Taking into account Propositions \ref{Yqq} and \ref{prop2}, the estimates (\ref{est1}) and (\ref{est2}) given by Proposition \ref{w2i} for $y^*$ and $u^*$ on the interval $[T_0, T]$ translate into the desired estimates (\ref{control}) and (\ref{control2}) on $[0, T]$.

The only new estimate is the one concerning $\|t(y-y_s)\|_{L^{\infty}(0, T; W^{1, \infty}(\Omega))}$, which can be proved in a manner similar to (\ref{qed}). Here, one obtains that for every $t$
\begin{equation}
\|y-y_s\|_{C^{\beta}(\Omega)} \leq C(\|y_t\|_q + \|y-y_s\|_2^{\beta_1})
\end{equation}
for a fixed $\beta_1$, whence the estimate follows.

Finally, we remove the global boundedness conditions on $a$.

Since $a(y_s) \in W^{2, q}(\Omega)$, $y_s$ is continuous and bounded. Its range is an interval, with endpoints $y_1$ and $y_2$. The set $\{x | d(x, y_s(\Omega)) \leq \epsilon\}$ is the interval $[y_1 - \epsilon, y_2 + \epsilon]$.
\begin{lemma}
If $a \in C^2(\set R)$ and $a'(y)>0$ $\forall y \in \set R$, then there exists a function $A: \set R \rightarrow \set R$ with two bounded derivatives on $\set R$ that agrees with $a$ on $[y_1 - \epsilon, y_2 + \epsilon]$ and has the property that $A'(y) \geq \mu_A > 0$ on $\set R$.
\label{3rd}
\end{lemma}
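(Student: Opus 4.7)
The plan is to build $A$ in two steps. First, extend the derivative $a'$, restricted to $[y_1-\epsilon, y_2+\epsilon]$, to a $C^1$ function $B$ defined on all of $\set R$ which is bounded, bounded below by a positive constant, and equal to $a'$ on the target interval. Second, integrate: set
\[
A(y) = a(y_1-\epsilon) + \int_{y_1-\epsilon}^y B(t)\,dt,
\]
which automatically agrees with $a$ on $[y_1-\epsilon, y_2+\epsilon]$ (the derivatives match there and the values coincide at one endpoint), and which inherits the desired global estimates $A' = B$ bounded above and below by positive constants and $A'' = B'$ bounded.

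For the extension, I would take a cutoff $\chi \in C_c^\infty(\set R)$ with $\chi \equiv 1$ on $[y_1-\epsilon, y_2+\epsilon]$ and $\supp \chi \subset [y_1-2\epsilon, y_2+2\epsilon]$, set
\[
c = \min_{y\in[y_1-2\epsilon,\,y_2+2\epsilon]} a'(y) > 0,
\]
which is strictly positive by continuity of $a'$ and the standing hypothesis $a' > 0$, and define $B(y) = \chi(y)\, a'(y) + (1-\chi(y))\, c$. On the larger interval $[y_1-2\epsilon, y_2+2\epsilon]$ one has $a'(y) \geq c$, hence $B(y) \geq \chi(y)\,c + (1-\chi(y))\,c = c$; outside this interval $B \equiv c$. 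So $B \geq c =: \mu_A > 0$ globally, and the boundedness of $B$ and $B'$ follows from the compactness of $\supp\chi$ together with the continuity of $a'$, $a''$, $\chi$, $\chi'$.

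There is no serious obstacle here; the lemma is a textbook smooth-extension argument. The only point requiring a moment's thought is preserving strict positivity of the derivative on the complement of $[y_1-\epsilon, y_2+\epsilon]$, and this is handled essentially for free by interpolating $a'$ against the positive constant $c$ (rather than zero) via $\chi$, so that the resulting $B$ is a convex combination of quantities both bounded below by $c$.
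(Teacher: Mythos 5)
Your proof is correct, and it takes a slightly different route from the paper's. The paper cuts off at the level of the second derivative: it extends $a''$ by constants to a bounded function $A_3$, multiplies by a cutoff $\chi$ supported in a $\delta$-neighborhood of the interval to get $A_2$, then integrates twice to recover $A$; the price is that after one integration the resulting candidate for $A'$ could dip below zero in the cutoff collar, so the paper must shrink $\delta$ to preserve uniform positivity. You instead cut off at the level of the first derivative and, crucially, interpolate $a'$ against the positive constant $c=\min_{[y_1-2\epsilon,y_2+2\epsilon]}a'$ rather than against zero, so that $B=\chi a' + (1-\chi)c$ is a convex combination of quantities each $\geq c$ on the support of $\chi$ and identically $c$ off it; positivity is then automatic, no smallness condition on the cutoff is needed, and a single integration produces $A$. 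Both constructions give $A$ with $A'$ constant outside a compact set and $A''$ compactly supported, so they yield the same class of extensions; yours is marginally cleaner in that it eliminates the $\delta$-shrinking step, and it correctly notes that agreement with $a$ on $[y_1-\epsilon,y_2+\epsilon]$ follows from matching derivatives there plus matching the value at the left endpoint.
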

\begin{proof} Define the function $A_3: \set R \rightarrow \set R$ by
\begin{equation}
A_3(x) = \left\{\begin{array}{ll}
a''(x), & x \in [y_1 - \epsilon, y_2 + \epsilon]\\
a''(y_1 - \epsilon), & x < y_1 - \epsilon\\
a''(y_2 + \epsilon), & x > y_2 + \epsilon.
\end{array}\right.
\end{equation}
Consider the function $A_2 = A_3 \chi$, where $\chi$ is a smooth function with $0 \leq \chi \leq 1$, $\chi \equiv 1$ on $[y_1 - \epsilon, y_2 + \epsilon]$, and $\supp \chi \subset [y_1 - \epsilon - \delta, y_2 + \epsilon + \delta]$. If we define
$
A_1(y) = a'(y_1) + \int_{y_0}^y {A_2(x) dx},
$
where $y_1 \in Range(y_s)$, then by making $\delta$ sufficiently small we get that $A_1$ is uniformly positive.

Finally,
$
A(y) = a(y_0) + \int_{y_0}^y {A_1(x) dx}
$
has all the desired properties.
\end{proof}

Consider the equation obtained by replacing $a$ with $A$ in (\ref{dif}):
\begin{equation}
\begin{array}{l}
y_t - \Delta A(y) = mu + f \text{ on } Q\\
y \mid_{\Sigma} = 0,\ y \mid_{t=0} = y_0.
\end{array}
%\tag{\ref{dif}'}
\label{dif'}
\end{equation}
By our previous considerations, equation (\ref{dif'}) is locally controllable. Writing it~as
\begin{equation}
(y-y_s)_t - \div(A'(y) \dl (y-y_s)) = mu + \div((A'(y)-A'(y_s))\dl y_s),
\end{equation}
one obtains an $L^{\infty}$ estimate on $Q$ for the solution $y$. Indeed, one can first assume as in the proof of Proposition \ref{prop1} that $y \in C^{2+\alpha, 1+\alpha/2}(\Omega)$. After multiplying the equation by $\max((y-y_s - M - tN), 0)$ for suitable $M$ and $N$ and integrating on $[t_1, t_2] \subset [0, T]$, one obtains with the help of (\ref{control}) that
\begin{equation}
\|y-y_s\|_{L^{\infty}([t_1, t_2] \times \Omega)} \leq \|y(t_1) - y_s\|_{\infty} + C(t_1-t_2)(\|y_0-y_s\|_2^{2/q} + \|y-y_s\|_{L^{\infty}([t_1, t_2] \times \Omega)}).
\end{equation}
By subdividing the interval $[0, T]$ into smaller subintervals, one infers that
\begin{equation}
\|y-y_s\|_{L^{\infty}(Q)} \leq C(\|y_0-y_s\|_{\infty} + \|y_0-y_s\|_2^{2/q}).
%\tag{\ref{estinfinit}}
\end{equation}
By making $\|y_0-y_s\|_{\infty}$ sufficiently small, we obtain that $\|y - y_s\|_{L^{\infty}(Q)} \leq \epsilon$; then $y(Q) \subset [y_1 - \epsilon, y_2 + \epsilon]$.

But on this interval $A$ and $a$ are identical, so it turns out that $y$ is also a solution of the original equation (\ref{dif}) and satisfies the estimates given by Proposition \ref{w2i}, as well as the new $L^{\infty}$ estimate (\ref{estinfinit}).
\end{proof}

\end{document}